\newtheorem*{ithm}{Theorem}
\newtheorem*{thma}{Theorem A}
\newtheorem*{thmb}{Theorem B}
\newtheorem*{thmc}{Theorem C}
\newtheorem*{thmd}{Theorem D}
\newtheorem*{claim}{Claim}
\newcommand{\LL}{\Lambda}
\newcommand{\TT}{\mathbb{T}}
\newcommand{\QQ}{\mathbb{Q}}
\newcommand{\FF}{\mathcal{F}}
\newcommand{\lra}{\longrightarrow}
\newcommand{\ZZ}{\mathbb{Z}}
\newcommand{\PP}{\mathcal{P}}
\newcommand{\MM}{\mathcal{M}}
\newcommand{\NN}{\mathcal{N}}
\newcommand{\ra}{\rightarrow}
\newcommand{\be}{\begin{equation}}
\newcommand{\ee}{\end{equation}}
\newcommand{\XX}{\mathcal{X}}
\newcommand{\al}{\mathcal{L}}
\newcommand{\oo}{\mathcal{O}}
\newcommand{\RR}{\mathcal{R}}
\newcommand{\mm}{\hbox{\frakfamily m}}
\newcommand{\FFc}{\mathcal{F}_{\textup{\lowercase{can}}}}
\newcommand{\BK}{\textup{BK}}
\numberwithin{equation}{section}
\newtheorem{thm}{Theorem}[section]
\newtheorem{lemma}[thm]{Lemma}
\newenvironment{define}{\par\medskip\noindent\refstepcounter{thm}
\bgroup{\hspace*{-0.15 cm}\bf{Definition}
\thethm.}\bgroup}{\egroup \egroup\par\medskip}
\newenvironment{question}{\par\medskip\noindent\refstepcounter{thm}
\bgroup{\hspace*{-0.15 cm}\bf{Question}
\thethm.}\bgroup}{\egroup \egroup\par\medskip}
\newtheorem{prop}[thm]{Proposition}
\newtheorem{cor}[thm]{Corollary}
\newtheorem{conj}[thm]{Conjecture}
\newenvironment{rem}{\par\medskip\noindent\refstepcounter{thm}
\bgroup{\hspace*{-0.15 cm}\bf{Remark} \thethm.}\bgroup}{\egroup
\egroup\par\medskip} \parskip 2pt
\newenvironment{example}{\par\medskip\noindent\refstepcounter{thm}
\bgroup{\hspace*{-0.15 cm}\bf{Example}
\thethm.}\bgroup}{\egroup \egroup\par\medskip}
\newcounter{Athm}[section]\setcounter{Athm}{1}
\renewcommand{\theAthm} {\arabic{Athm}}
\begin{document}
\title{{D}\lowercase{eformations} \lowercase{of} {K}\lowercase{olyvagin systems}}

\author{K\^az\i m B\"uy\"ukboduk}

\email{kazim@math.stanford.edu}
\address{K\^az\i m B\"uy\"ukboduk \hfill\break\indent Ko\c{c} University Mathematics, 34450 Sariyer, Istanbul, Turkey
\hfill\break\indent
\,\,\,\,\,\,\,\,\,\,\,\,\,\,\,\,\,\,\,\,\,\,\,\,\,\,\,\,\,\,\,\,\,\,\,\,\,\,\,\,\,\,\,\,\,\,\,\,\,\,\,\,\,\,\,\,\,\,\,\,\,\,\,\,\,\,\,\,\,\,\,\,\,\,\,\,\,\,\,\,\,\,\,\,\,\,\,\,\,\,\,\,\,\,\,\,\,and
\hfill\break
\indent MPIM Vivatsgasse 7, 53111 Bonn, Deutschland\vspace{1em}}

\keywords{Euler systems, Kolyvagin systems, Deformations of Galois representations, Hida families, the eigencurve}
\subjclass[2000]{11G05; 11G10; 11G40; 11R23; 14G10}

\begin{abstract}
Ochiai has previously proved that the Beilinson-Kato Euler systems for modular forms interpolate in nearly-ordinary $p$-adic families (Howard has obtained a similar result for Heegner points), based on which he was able to prove a half of the two-variable main conjectures. The principal goal of this article is to generalize Ochiai's work in the level of Kolyvagin systems so as to prove that Kolyvagin systems associated to Beilinson-Kato elements interpolate in the full deformation space (in particular, beyond the nearly-ordinary locus) and use what we call \emph{universal Kolyvagin systems} to attempt a main conjecture over the eigencurve. Along the way, we utilize these objects in order to define a quasicoherent sheaf on the eigencurve that behaves like a $p$-adic $L$-function (in a certain sense of the word, in $3$-variables).

\end{abstract}

\maketitle
\tableofcontents
\section{Introduction}
\label{sec:intro}
Fix forever a prime $p>2$. {Classical} Iwasawa theory concerns the variation of arithmetic invariants of number fields in a $\ZZ_p$-tower. Mazur (resp., Greenberg) extended this study to abelian varieties (resp., to motives) along a $\ZZ_p$-extension. All these may be considered within the framework of Mazur's general theory of Galois deformations and hand in hand with this perspective, Greenberg in \cite[Conjecture 4.1]{greenbergdeform} formulated a ``main conjecture'' for a $p$-adic deformation of a motive. 

The results of this article are closely related to Greenberg's main conjecture: Given a `big' Galois representation (attached to a $p$-{adic deformation of a motive}), we prove in a wide variety of cases that an associated `big' Kolyvagin system exists as well (Theorem A below). As a rather standard application, we deduce that the relevant \emph{strict} Selmer groups are controlled by the big Kolyvagin systems we prove to exist (Theorem C). When the motive in question is that attached to an elliptic  modular form, we show that the \emph{universal} Kolyvagin system we prove to exist interpolates the Beilinson-Kato Kolyvagin systems associated to the `modular points' of the $p$-adic deformation space (Theorem B)  in a suitable sense. Furthermore, we use its leading term in order to define what we call the \emph{sheaf of universal $p$-adic $L$-function} (Definition~\ref{def:universalpadicLfunction} and Theorem~D). See Section~\ref{intro:univpadicL} for further discussion on (potential) applications of all these towards a main conjecture over the eigencurve. 

Before we explain our results in detail,  we provide a quick overview of Mazur's theory of Galois deformations in order to motivate for our main technical result; see \cite{mazurdeform, lenstradeform, gouveadeform} for details.
\subsection{Deformations of Galois representations}
Let $\Phi$ be a finite extension of $\QQ_p$ and $\oo$ be the ring of integers of $\Phi$. Let $\varpi \in \oo$ be a uniformizer, and let $\texttt{k}=\oo/{\varpi}$ be its residue field. Consider the following category $\mathbf{C}$:
\begin{itemize}
\item Objects of $\mathbf{C}$ are complete, local, Noetherian commutative $\oo$-algebras $A$ with residue field $\texttt{k}_A=A/\mm_A$ isomorphic to $\texttt{k}$, where $\mm_A$ denotes the maximal ideal of $A$.
\item A morphism $f:A\ra B$ in $\mathbf{C}$ is a local $\oo$-algebra morphism.
\end{itemize}

Let $\Sigma$ be a finite set of places of $\QQ$ that contains $p$ and $\infty$. Let $G_{\QQ,\Sigma}$ denote the Galois group of the maximal extension $\QQ_\Sigma$ of $\QQ$ unramified outside $\Sigma$. Fix an \emph{absolutely irreducible}, continuous Galois representation $\overline{\rho}: G_{\QQ,\Sigma}\ra \textup{GL}_n(\texttt{k})$ and let $\overline{T}$ be the representation space (so that $\overline{T}$ is an $n$-dimensional $\texttt{k}$-vector space on which $G_{\QQ,\Sigma}$ acts continuously).

Let ${D}_{\overline{\rho}}: \mathbf{C}\lra \texttt{Sets}$ be the functor defined as follows. For every object $A$ of $\mathbf{C}$, ${D}_{\overline{\rho}}(A)$ is the set of continuous homomorphisms $\rho_A:G_{\QQ,\Sigma}\ra \textup{GL}_n(A)$
 that satisfy $\rho_A\otimes_A k\cong \overline{\rho}$, taken modulo conjugation by the elements of $\textup{GL}_n(A)$. For every morphism $f:A\ra B$ in $\mathbf{C}$, ${D}_{\overline{\rho}}(f)(\rho_A)$ is the $\textup{GL}_n(B)$-conjugacy class of $\rho_A\otimes_A B$.

 \begin{ithm}[Mazur]
 The functor ${D}_{\overline{\rho}}$ is representable.
 \end{ithm}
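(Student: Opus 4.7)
The plan is to apply Schlessinger's pro-representability criterion to the restriction of $F_{\bar{\rho}}$ to the subcategory $\mathbf{C}_0\subset\mathbf{C}$ of Artinian local $\oo$-algebras with residue field $\texttt{k}$. Schlessinger's theorem requires four axioms: (H1) the natural map
\[ \alpha:F_{\bar{\rho}}(A'\times_A A'')\lra F_{\bar{\rho}}(A')\times_{F_{\bar{\rho}}(A)} F_{\bar{\rho}}(A'') \]
is surjective whenever $A''\twoheadrightarrow A$ is a small extension in $\mathbf{C}_0$; (H2) $\alpha$ is bijective if in addition $A=\texttt{k}$ and $A''=\texttt{k}[\epsilon]$; (H3) the tangent space $t_{F_{\bar{\rho}}}:=F_{\bar{\rho}}(\texttt{k}[\epsilon])$ is finite-dimensional over $\texttt{k}$; and (H4) $\alpha$ is bijective when $A'=A''$ and the two structure maps to $A$ coincide. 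Once these hold, $F_{\bar{\rho}}|_{\mathbf{C}_0}$ is pro-represented by some $R\in\mathbf{C}$, and the extension to the entire category $\mathbf{C}$ is automatic because every $A\in\mathbf{C}$ is the inverse limit of its Artinian quotients $A/\mm_A^n$ and $\rho_A$ is continuous by hypothesis.

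Conditions (H1) and (H2) I would verify by the following gluing argument: given a pair $(\rho_{A'},\rho_{A''})$ whose images in $F_{\bar{\rho}}(A)$ coincide, one lifts the conjugating element from $\textup{GL}_n(A)$ to $\textup{GL}_n(A'')$, using surjectivity of the reduction map on matrix groups, and replaces $\rho_{A''}$ by its conjugate so that it and $\rho_{A'}$ literally agree after reduction to $A$; the universal property of the fibre product then assembles the pair into a representation into $\textup{GL}_n(A'\times_A A'')$ that maps to $(\rho_{A'},\rho_{A''})$. For (H3), the standard first-order calculation identifies $t_{F_{\bar{\rho}}}$ with the continuous cohomology $H^1(G_{\QQ,\Sigma},\textup{ad}(\bar{\rho}))$, where $\textup{ad}(\bar{\rho})=\textup{End}_{\texttt{k}}(\bar{T})$ carries the adjoint Galois action; finite-dimensionality then follows from the classical finiteness theorem for Galois cohomology of $G_{\QQ,\Sigma}$ with finite coefficients.

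The main obstacle, and the step where absolute irreducibility of $\bar{\rho}$ is essential, is (H4). Two elements of $F_{\bar{\rho}}(A'\times_A A')$ with equal image under $\alpha$ are represented by lifts whose projections to $\textup{GL}_n(A')$ are individually conjugate; combining the two conjugating elements into a single element of $\textup{GL}_n(A'\times_A A')$ requires that they agree after reduction to $A$, and the residual ambiguity is a centralizer element of the reduction $\rho_A$. Condition (H4) thus amounts to showing that the centralizer of any deformation consists only of scalars, which act trivially on conjugacy classes. This reduces, by induction on the length of $A$ (successive quotients $\mm_A^n/\mm_A^{n+1}$ being $\texttt{k}$-vector spaces on which centralizer elements act through $G_{\QQ,\Sigma}$-equivariant endomorphisms of $\bar{T}$), to the statement $\textup{End}_{\texttt{k}[G_{\QQ,\Sigma}]}(\bar{T})=\texttt{k}$, which is a direct consequence of the absolute irreducibility of $\bar{\rho}$ via Schur's lemma. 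Schlessinger's theorem then produces the universal deformation ring $R_{\bar{\rho}}\in\mathbf{C}$ pro-representing $F_{\bar{\rho}}$.
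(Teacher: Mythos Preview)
Your outline is correct and follows the standard route via Schlessinger's criterion; this is precisely the argument in the references the paper cites (Mazur, de Smit--Lenstra, Gouv\^ea). Note, however, that the paper does not supply its own proof of this theorem: it is quoted as background, attributed to Mazur, with a pointer to the literature for details. So there is no in-paper argument to compare against; your sketch simply reproduces the classical proof those references contain.

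One small remark on your statement of (H4): Schlessinger's fourth condition requires bijectivity of $\alpha$ when $A'=A''$ and $A''\to A$ is a \emph{small} extension (not merely that the two structure maps agree). Your verification via Schur's lemma and the scalar-centralizer statement is the right one, and absolute irreducibility is indeed exactly the hypothesis needed there.
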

 In other words, there is a ring $R(\overline{\rho})\in \textup{Ob}(\mathbf{C})$ and a continuous representation
 $$\pmb{\rho}: G_\QQ\lra \textup{GL}_n(R(\overline{\rho}))$$
 such that for every $A \in \textup{Ob}(\mathbf{C})$ and any continuous representation $\rho_A:G_{\QQ,\Sigma}\ra \textup{GL}_n(A)$, there is a unique morphism $f_A: R(\overline{\rho})\ra A$ for which we have $\pmb{\rho}\otimes_{R(\overline{\rho})}A\cong \rho_A.$ The ring $R(\overline{\rho})$ is called the universal deformation ring and $\pmb{\rho}$ the universal deformation of $\overline{\rho}$.

Let $\textup{ad}\,\overline{\rho}$ be the adjoint representation. We say that the deformation problem for $\overline{\rho}$ is \emph{unobstructed} if the following hypothesis holds true:\\

(\textbf{H.nOb}) $H^2(G_{\QQ,\Sigma},\textup{ad}\,\overline{\rho})=0$.\\

 When (\textbf{H.nOb}) holds true, Mazur proved that  $R(\overline{\rho})\cong \oo[[X_1,\cdots,X_d]]$, where $d$ is the dimension of the ${\texttt{k}}$-vector space $H^1(G_{\QQ,\Sigma},\textup{ad}\,\overline{\rho})$.

\subsubsection{$p$-ordinary families}
One might also consider a subclass of deformations of a given $\overline{\rho}$, rather than the full deformation space $R(\overline{\rho})$. The following paragraph illustrates a particular case which has been much studied by many authors, see Section~\ref{subsec:example2} for a perspective offered through the general theory we develop here. 

Suppose $\overline{\rho}: G_{\QQ,\Sigma}\lra \textup{GL}_2(\texttt{k})$ is $p$-ordinary and a $p$-distinguished, in the sense that the restriction of $\overline{\rho}$ to a decomposition group at $p$ is reducible and non-scalar. Assume further that $\overline{\rho}$ is odd, i.e., $\det(\rho)(c)=-1$ where $c$ is any complex conjugation. Then Serre's conjecture~\cite{serreconj} (as proved in~\cite{kharewint,kisinserre}) implies that $\overline{\rho}$ arises from an ordinary  newform. Hida associates in \cite{hidafamily,hida} such $f$ a family of ordinary modular forms and a Galois representation $\mathcal{T}$ attached to the family, with coefficients in the \emph{universal ordinary Hecke algebra} $\frak{H}$.  Thanks to the ``$R=T$'' theorems proved in~\cite{wiles, taylorwiles} (and their refinements) it follows that $\frak{H}$ is the universal ordinary deformation ring of $\overline{\rho}$, parametrizing all ordinary deformations of $\overline{\rho}$. Ochiai in~\cite{ochiaideform} (resp., Howard in~\cite{how2}) has studied the Iwasawa theory of this family of Galois representations by interpolating Kato's Euler system (resp., Heegner points) for each member of the family to a `big' Euler system for the whole $p$-ordinary family.

\subsection{The results} One of the main goals of the current article is to generalize the works of Ochiai and Howard so as to treat the full deformation ring (e.g., not necessarily its $p$-ordinary locus) of a  mod $p$ Galois representation. Our approach is altogether different from theirs: Instead of interpolating Euler systems (as in \cite{ochiaideform,how2}), we instead deform Kolyvagin systems. We remark that a Kolyvagin system has exactly the same use as an Euler system, when they are used to bound Selmer groups. We recall the definition of a Kolyvagin system in Section~\ref{subsec:kolsysforbig} below. See also \cite[\S3.2]{mr02} for the relation between Euler systems and Kolyvagin systems over a DVR or over the cyclotomic Iwasawa algebra $\LL=\ZZ_p[[\Gamma]]$, where $\Gamma=\textup{Gal}(\QQ_\infty/\QQ)$ is the Galois group of the (unique) $\ZZ_p$-extension $\QQ_\infty/\QQ$.


 \subsubsection{The Setup}
 \label{subsubsec:setup} In this paper we will study the deformation problem for Kolyvagin systems to one of the following choices of rings:
\begin{itemize}
\item[(i)]$\frak{R}=\RR[[\Gamma]]$, where $\RR$ is a dimension-2 Gorenstein $\oo$-algebra with a regular sequence $\{\varpi,X\}$ such that $\RR/X$ is a finitely generated torsion-free $\oo$-module.
\item[(ii)] ${R}=\oo[[X_1,X_2,X_3]]$ (which we shall arise as an \emph{unobstructed} universal deformation ring of a two dimensional mod $\varpi$ Galois representation $\overline{\rho}$ in examples.)
\end{itemize}

Let $\mm$ (resp., $\mathcal{M}$) be the maximal ideal of $\frak{R}$ (resp., of $R$) and $\texttt{k}=\frak{R}/\mm$ (resp., $R/\mathcal{M}$ which we also denote by $\texttt{k}$ as there will be no danger of confusion) be the residue field. When the coefficient ring we are interested in is the ring $\frak{R}$ as in (i) above, we let $\mathcal{T}$ be a free $\RR$-module of finite rank which is endowed with a continuous $G_\QQ$-action, unramified outside a finite set of primes. Set $\frak{T}=\mathcal{T}\otimes_{\ZZ_p}\LL$, where we allow $G_\QQ$ act on both factors. When the coefficient ring we are interested in is ${R}$ as in (ii), we let $\TT$ be a free ${R}$-module of finite rank endowed with a continuous $G_\QQ$-action unramified outside a finite number of primes. Let $\Sigma$ be a finite consisting of primes at which $\TT$ is ramified, $p$ and $\infty$.  Let $\QQ_{\Sigma}$ denote the maximal extension of $\QQ$ unramified outside $\Sigma$. We set $\overline{T}=\frak{T}/\mm$ (resp., $\overline{T}=\TT/\MM$) and define $\chi(\frak{T})=\dim_{\texttt{k}}\overline{T}^{-}$ (resp., $\chi(\TT)$), where $\overline{T}^-$ is the $(-1)$-eigensubspace of $\overline{T}$ under the action of a fixed complex conjugation.

Let $\pmb{\mu}_{p^\infty}$ be the $p$-power roots of unity and for any $\oo[[G_{\QQ}]]$-module $M$, we let $M^*=\textup{Hom}\left(M,\pmb{\mu}_{p^\infty}\right)$ denote its Cartier dual.

 The following hypotheses will play a role in what follows:\\
(\textbf{H1}) $\overline{T}$ is an absolutely irreducible $G_\QQ$-module.\\
(\textbf{H2}) There is a $\tau \in G_\QQ$ such that $\tau$ acts trivially on $\pmb{\mu}_{p^\infty}$ and the $R$-module $\TT/(\tau-1)\TT$ \par(resp., the $\frak{R}$-module $\frak{T}/(\tau-1)\frak{T}$) is free of rank one.\\
(\textbf{H3}) $H^0(\QQ,\overline{T})=H^0(\QQ,\overline{T}^*)=0$.\\
(\textbf{H4}) Either $\textup{Hom}_{\mathbb{F}_p[[G_\QQ]]}(\overline{T},\overline{T}^*)=0$, or $p>4$.\\
(\textbf{H.Tam}) For all bad primes $\ell \neq p$,
 \begin{itemize} \item[(i)] $H^0(\QQ_\ell,\overline{T})=0$.
\item[(ii)] $H^0(I_\ell,A)$ is $p$-divisible.  \end{itemize}
(\textbf{H.nA}) $H^0(\QQ_p,\overline{T}^*)=0$.
\begin{rem}
The hypotheses (\textbf{H1})-(\textbf{H4}) are also present in \cite{mr02}. (\textbf{H.Tam}) will be used in Section~\ref{subsec:cartesianprop} to check that the unramified local conditions are \emph{cartesian}\footnote{Most importantly in the proof that the map $\beta$ that appears in Lemma~\ref{lem:alphainj} is injective. Although one may possibly verify this fact under less restrictive hypothesis, the assumption (\textbf{H.Tam}) is simple to state, easy to check and thus allows one to produce many interesting examples where Theorem A applies; c.f., Proposition~\ref{prop:tamimpliestam} below.} in a sense to be made precise.
\end{rem}
\begin{rem} 
When $\mathcal{T}$ is the self-dual Galois representation attached to a (twisted) Hida family with coefficients in the universal ordinary Hecke algebra $\RR$ (as studied in \cite{how2}), the hypothesis (\textbf{H.Tam})(ii) asks that there is a single member $f$ of the twisted Hida family such that the local Tamagawa number $c_{\ell}(f)$, defined as in \cite[\S I.4.2.2]{FoPR}, is prime to $p$. As explained in \cite[\S3]{kbbheegner}, this in turn implies that the Tamagawa number $c_\ell(g)$ is prime to $p$ for every member $g$ of the twisted family. 

See Section~\ref{subsec:example1} (particularly, Proposition~\ref{prop:tamimpliestam} and Remark~\ref{rem:nonanamolous}) for a discussion of the content of the hypotheses (\textbf{H.Tam}) and (\textbf{H.nA}) when $\overline{T}$ is the mod $p$ Galois representation attached to (the central critical twist of) a cuspidal elliptic modular newform $f$. We note here only the fact that these hypotheses simultaneously hold true for infinitely many primes $p$  in that setting, so our results below are not vacuous.
\end{rem}
\subsubsection{``Big'' Kolyvagin systems}
For a $G_{\QQ,\Sigma}$-representation $\TT$ (resp., $\frak{T}$) as in Section \ref{subsubsec:setup}, we let $\overline{\mathbf{KS}}(\TT,\FFc,\PP)$ (respectively, $\overline{\mathbf{KS}}(\frak{T},\FFc,\PP)$) denote the $R$-module (respectively, the $\frak{R}$-module) of \emph{big} Kolyvagin systems for the \emph{canonical Selmer structure} $\FFc$ on $\TT$ (respectively, for $\frak{T}$). See Sections \ref{sec:selmerstr} and \ref{subsec:kolsysforbig} for precise definitions of these objects.
\begin{thma}[Theorem~\ref{thm:KSmain} below]
\label{thm:mainintro}
Suppose $\chi(\frak{T})=\chi(\TT)=1$. Assume that the hypotheses $($\textbf{\textup{H1)} - $($\textbf{\textup {H4}}}$)$, $($\textbf{\textup{H.Tam}}$)$ and $($\textbf{\textup{H.nA}}$)$ hold true. Then,
\begin{itemize}
\item[(i)] the $R$-module $\overline{\mathbf{KS}}(\TT,\FFc,\PP)$ is free of rank one, generated by a Kolyvagin system $\pmb{\kappa}$ whose image $\bar{\pmb{\kappa}} \in {\mathbf{KS}}(\overline{T},\FFc,\PP)$ is non-zero,\\
\item[(ii)]the $\frak{R}$-module $\overline{\mathbf{KS}}(\frak{T},\FFc,\PP)$ is free of rank one. When the ring $\RR$ is regular, the module $\overline{\mathbf{KS}}(\frak{T},\FFc,\PP)$ is generated by $\pmb{\kappa}$ whose image $\bar{\pmb{\kappa}} \in {\mathbf{KS}}(\overline{T},\FFc,\PP)$ is non-zero.
\end{itemize}
\end{thma}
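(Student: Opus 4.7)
The plan is to reduce both assertions to Mazur--Rubin's existence theorem over the residue field $\texttt{k}$ (\cite{mr02}) by an inductive lifting argument along a regular sequence in the coefficient ring. For part (i), the regular local ring $R=\oo[[X_1,X_2,X_3]]$ admits the regular sequence $\{\varpi,X_1,X_2,X_3\}$; for part (ii), by hypothesis $\RR$ has the regular sequence $\{\varpi,X\}$, and adjoining $\gamma-1$ for a topological generator $\gamma$ of $\Gamma$ gives a length-three regular sequence in $\frak{R}$. Filter the ring by the ideals generated by successive truncations, and reduce at the bottom of the induction to $\mathbf{KS}(\bar{T},\FFc,\PP)$, which by Mazur--Rubin is free of rank one over $\texttt{k}$ and is generated by a non-zero class $\bar{\pmb{\kappa}}$, precisely because $\chi(\bar{T})=1$ and $(\hone)$--$(\hfour)$ hold.

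\textbf{Cartesian local conditions.} The inductive step rests on checking that the canonical Selmer structure $\FFc$ is \emph{cartesian} with respect to reduction modulo any element of the regular sequence; equivalently, for each place $\ell$ the formation of the local condition commutes with base change. At primes $\ell\neq p$ the local condition is the unramified one, and $(\htam)$(i)--(ii) yield both the vanishing $H^0(\QQ_\ell,\bar{T})=0$ and the divisibility of $H^0(I_\ell,A)$ that are needed for the unramified condition to descend to quotients and to lift to deformations. At $p$, the Greenberg-type local condition is cartesian because $(\mathbf{H.nA})$ forces injectivity of the relevant transition map on local $H^1$'s --- this is essentially the map $\beta$ that will be invoked in Lemma~\ref{lem:alphainj}.

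\textbf{Lifting Kolyvagin systems.} Given the cartesian property, for a surjection $R'\twoheadrightarrow R'/fR'$ along a non-zero-divisor $f$ in the regular sequence, one shows that the reduction map
\[
\overline{\mathbf{KS}}(\TT_{R'},\FFc,\PP)\big/f\,\overline{\mathbf{KS}}(\TT_{R'},\FFc,\PP)\lra \overline{\mathbf{KS}}(\TT_{R'/fR'},\FFc,\PP)
\]
is an isomorphism. The argument is a diagram chase through the Selmer complex whose $H^0$ computes $\overline{\mathbf{KS}}(-,\FFc,\PP)$, together with the core rank computation that keeps $\chi=1$ along the tower. Taking the inverse limit over Artinian quotients of $R$ (resp.\ $\frak{R}$) then produces a big Kolyvagin system $\pmb{\kappa}$ with the prescribed reduction $\bar{\pmb{\kappa}}$, and a Nakayama argument upgrades the resulting surjectivity of the reduction map to freeness of rank one, provided the coefficient ring is regular.

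\textbf{Main obstacle.} The principal technical difficulty lies in the Gorenstein (non-regular) case of part (ii). There Nakayama alone is insufficient, because an element that becomes a non-zero generator modulo $(\varpi,X,\gamma-1)$ need not generate freely. Instead, one must show that $\overline{\mathbf{KS}}(\frak{T},\FFc,\PP)$ is a reflexive $\frak{R}$-module of generic rank one, and then invoke the fact that over a three-dimensional Gorenstein local ring such modules are free. The loss of information relative to the regular case --- namely the inability to guarantee that a generator reduces to a non-zero element of $\mathbf{KS}(\bar{T},\FFc,\PP)$ --- is reflected in the cautious wording of part (ii).
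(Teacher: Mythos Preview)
Your outline shares the broad shape of the paper's argument (cartesian properties plus reduction to Artinian quotients, then inverse limit), but the technical core is misplaced in several respects.

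\textbf{Location of the map $\beta$.} You attribute the injectivity of $\beta$ (Lemma~\ref{lem:alphainj}) to the local condition at $p$, but in the paper this map governs the cartesian property at \emph{bad primes $\ell\neq p$}. At $p$ the canonical condition is simply all of $H^1(\QQ_p,-)$ (Proposition~\ref{prop:loccondatp}), which is trivially cartesian under $(\mathbf{H.nA})$; the delicate injectivity argument using $(\mathbf{H.Tam})$ occurs away from $p$.

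\textbf{Core vertices versus ``Selmer complex''.} The paper does not realise $\overline{\mathbf{KS}}$ as $H^0$ of a Selmer complex and perform a diagram chase. Instead it uses the Mazur--Rubin graph formalism: one shows (Theorem~\ref{thm:corevertexn}) that a core vertex $n$ for $\bar{T}$ remains a core vertex for every Artinian quotient $\TT_{\bar{\frak n}}$ (resp.\ $\frak{T}_{\bar{\frak s}}$), via an upper bound (Pontryagin-dual cyclicity, Corollary~\ref{cor:upperbound}) and a lower bound on lengths (Proposition~\ref{prop:lowerbound}). This forces $H^1_{\FFc(n)}(\QQ,\TT_{\bar{\frak n}})$ to be free of rank one over the Artinian ring, and the graph argument (Theorems~\ref{thm:KSsurjective}, \ref{thm:KSinjective}) identifies $\mathbf{KS}(\TT_{\bar{\frak n}},\FFc,\PP)$ with this stalk. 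The lifting of Kolyvagin systems is then an inverse limit of free rank-one modules with surjective transition maps (Lemmas~\ref{lem:j}--\ref{lem:surj}), not a Nakayama argument applied to a single quotient map.

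\textbf{The Gorenstein case.} Your proposed route---show $\overline{\mathbf{KS}}(\frak{T},\FFc,\PP)$ is reflexive of generic rank one, then conclude freeness because the ring is three-dimensional Gorenstein---does not work: reflexive rank-one modules over a Gorenstein local ring need not be free (the divisor class group can be nontrivial). The paper instead proves freeness directly over each Artinian quotient $\frak{R}_{\bar{\frak s}}$: the Pontryagin dual of the Selmer group is cyclic and has the correct length, and the Gorenstein property enters only to say that $\Phi/\oo$ is a dualizing module, so that double-dualising recovers freeness (Corollary~\ref{cor:corevertexn}). The caveat in part (ii) about regularity of $\RR$ concerns only the nonvanishing of $\bar{\pmb{\kappa}}$, because when $\RR$ is not regular the bottom of the inverse system is $\frak{T}_{1,1,1}$ rather than $\frak{T}/\mm=\bar{T}$, and the map to $\mathbf{KS}(\bar{T},\FFc,\PP)$ is not part of the limit diagram.
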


In Theorem A, the Galois modules $\TT$ (resp., $\frak{T}$) should of course be thought of as a family of Galois representations and the conclusion of Theorem~A as an assertion that the Kolyvagin systems for each individual member of the family $\TT$ (resp., $\frak{T}$) do interpolate so as to give rise to a \emph{big} Kolyvagin system.
\begin{rem}
\label{rem:MRdarmonconj}
The \emph{rigidity principle} that the Kolyvagin systems exhibit as in Theorem~A has been crucially used in his recent work~\cite{kbbssCM1} on the Iwasawa theory of abelian varieties for supersingular primes. More precisely, the author appeals to Theorem A in the verification that certain bounds in the signed main conjectures for CM elliptic curves over general totally real fields are indeed sharp, in situations where we do not have direct access to analytic class number formulas (which were traditionally employed so as to promote various inequalities in the statements of the main conjectures to equalities). Note also that a variant of Theorem A plays a further important role in \cite{kbbssCM1, kbbCMabvar}, providing evidence for the Rubin-Stark conjectures.  We finally remark that the rigidity phenomenon for Kolyvagin systems has also played a pivotal role in the proof of Darmon's conjecture by Mazur and Rubin in \cite{mrdarmon}.
\end{rem}
\begin{rem}
\label{rem:LambdaadicKS}
The author has proved (as part of his Ph.D. thesis) a primal version of Theorem A when $R$ is the cyclotomic Iwasawa algebra $\LL$ and $\TT=T\otimes\LL$ is the cyclotomic deformation of $T$. Barry Mazur had asked us then to work out a generalization of this result to other types of deformation rings (and our long due response is Theorem A). Note that Theorem~A is not only much more general than \cite[Theorem 3.23]{kbb} in its scope, it also allows us to gain insight for the Iwasawa theory on the eigencurve (as we outline in Section~\ref{intro:univpadicL}). We further remark that the proof of Theorem~A is technically much more demanding. One reason is the fact that the behavior of unramified cohomology for an arbitrary family of Galois representations at bad primes $\ell\neq p$ is much more complicated, as compared with the situation when the family is given as twists of $T$ by powers of the cyclotomic character. (Note that this matter also makes its appearance in the statement of \cite[Theorem 3.13]{pottharst}.) \end{rem}

\begin{rem} We expect that the arguments used in the proof of Theorem A should generalize without much effort to handle a general regular ring (not necessarily of relative dimension $3$ over $\oo$, as $R$ above is). Note on the other hand that for explicit arithmetic applications of our theorem, we would like that the big Kolyvagin system for $\TT$ (resp., for $\frak{T}$) of Theorem~A interpolates Kolyvagin systems which are explicitly related to $L$-values. At the time when this article was written up, this was only possible when the residual representation $\overline{T}$ is two dimensional and $\chi(\overline{T})=1$, in which case the unobstructed universal deformation ring is $R$. Note that when $\overline{T}$ is two dimensional and $\chi(\overline{T})=1$, we know that $\overline{T}$ is modular. A density theorem due to B\"ockle shows that the big Kolyvagin system for the universal deformation $\TT$ indeed interpolates the Beilinson-Kato Kolyvagin systems for elliptic modular forms whose associated Galois representations are congruent mod $p$ to $\overline{T}$. See Theorems~\ref{thm:interpolation}, \ref{thm:univKSvsOchiaiKS} and \ref{thm:firstquestionhasaffrimativeanswerifconjholdstrue} below for details.
\end{rem}
 \subsubsection{Universal Kolyvagin systems and Beilinson-Kato elements}
 \label{subsub:universalKS}  Let $E_{/\QQ}$ be an elliptic curve, $\overline{T}=E[p]$ be the $p$-torsion
subgroup of $E(\bar{\QQ})$ and  $\overline{\rho}=\overline{\rho}_E$ the mod $p$ Galois representation on
$\overline{T}$. Let $\Sigma$ be the set of primes that consists of primes at which $E$ has bad reduction, $p$ and $\infty$. Define also $R=R(\overline{\rho})$ to be the universal deformation ring of
$\overline{\rho}$. The universal deformation representation $\TT$ is then a free $R$-module of
rank two.  Until the end of this introduction, we concentrate on this particular representation $\TT$ and give applications of Theorem A. See Sections~\ref{subsec:example1}-\ref{subsub:generalex} below for the most general form of the results we record in \S\ref{subsub:universalKS}-\ref{intro:univpadicL} (where we treat cuspidal elliptic newforms of weight $>2$ as well). 
\begin{ithm}[Flach, \cite{flach}]
\label{example:flach}
Suppose that
\begin{itemize}
\item[(\textbf{F1})] $\overline{\rho}_E$ is surjective,
\item[(\textbf{F2})] $H^0(\QQ_\ell,\overline{T}\otimes \overline{T})=0$ for all $\ell \in \Sigma$,
\item[(\textbf{F3})] $p$ does not divide $\Omega^{-1}L(\textup{Sym}^2(E),2)$, where $\Omega=\Omega(\textup{Sym}^2(E),2)$ is the transcendental period.
\end{itemize}
Then $E$ satisfies \textup{(\textbf{H.nOb})} and $R\cong\ZZ_p[[X_1,X_2,X_3]]$.
\end{ithm}
\emph{In addition to the assumptions \textup{(\textbf{F1}) - (\textbf{F3})}, we suppose throughout the introduction  that $\TT$ satisfies the hypotheses
\textup{(\textbf{H1}) - (\textbf{H4}), (\textbf{H.Tam})(i)} and \textup{(\textbf{H.nA}).}} See \S\ref{subsec:example1} for the content of these assumptions in a variety of cases of interest.

\begin{rem}
\label{example:weston}
Weston has proved the following result, which is an important generalization of Flach's theorem to modular forms of higher weight. Let $f$ be an elliptic newform of level $N$, weight $k>2$ and character $\psi$. Let $K$ be the number field generated by the Fourier coefficients of $f$ and $\oo_K$ be its ring of integers. For a prime $\wp$ of $K$ above $p$, let $\texttt{k}=\oo_K/\wp$ and $\oo=W(\texttt{k})$, the Witt vectors of $\texttt{k}$.  Let
$$\overline{\rho}=\overline{\rho}_{f,\wp}:G_{\QQ,\Sigma}\lra \textup{GL}_2(\texttt{k})$$
be the Galois representation attached to $f$ by Deligne.  Weston proved in~\cite{westonunobstructed} that  the deformation problem for $\overline{\rho}$ is unobstructed and $R(\overline{\rho})\cong \oo[[X_1,X_2,X_3]]$, for almost all choices of a prime $\wp$ of $K$. Using Weston's theorem, all our results proved in weight two generalizes \emph{verbatim} to higher weights.
\end{rem}

We next state a consequence of Theorem A, which may be thought of an extension of a result of Ochiai (on the interpolation of Beilinson-Kato \emph{Euler system} in $p$-ordinary familes), but beyond the ordinary locus. Let us first set our notation. Let $g=\sum a_nq^n$ be a newform of weight $\kappa\geq 2$ and let
$\oo_g$ be the finite flat normal extension of $\ZZ_p$ which the
$a_n$'s generate. Let
$$\rho_f:G_{\QQ}\lra\textup{GL}_2(\oo_g)$$
be Deligne's Galois representation attached to $g$ and $T_g$ be
the free $\oo_g$-module of rank two on which $G_{\QQ}$ acts via
$\rho_f$. Suppose that $\overline{\rho}_f\cong \overline{\rho}$, so that by the
universality of $R$ there is a ring homomorphism $\psi_g:R\ra \oo_g$
which induces an isomorphism $\TT\otimes_{\psi_g}\oo_g\cong T_g$
and a map
$$\overline{\mathbf{KS}}(\TT,\FFc,\PP)\stackrel{\psi_g}{\lra}
\overline{\mathbf{KS}}(T_g,\FFc,\PP).$$
For each $g$ as above, the Beilinson-Kato Euler system of \cite{ka1} also gives rise to a Kolyvagin
system $\pmb{\kappa}^{\textup{BK},g} \in
\overline{\mathbf{KS}}(T_g,\FFc,\PP)$ (c.f., \cite[\S3.2]{mr02}). 

We call any generator $\pmb{\kappa}$ of the cyclic $R$-module $\overline{\mathbf{KS}}(\TT,\FFc,\PP)$ a \emph{universal Kolyvagin system}. Universal Kolyvagin systems interpolate the Beilinson-Kato Kolyvagin systems in the following sense:
\begin{thmb}[Theorem~\ref{thm:interpolation}]  Let $\pmb{\kappa}$  be a universal Kolyvagin system. For every newform $f$ as above, we have ${\kappa}^{\textup{Kato},g}=\lambda_g\cdot\psi_g(\pmb{\kappa})$
for some $\lambda_g \in \oo_g$.
\end{thmb}
\begin{rem}
\label{ref:remglobalfunctiononR}
It would be very interesting to know whether the interpolation factors $\lambda_g$ above \emph{interpolate} to give rise to a global function on $\textup{Spec}(R)$. In Section~\ref{subsec:example2}, we are able to answer to this question affirmatively over Hida's nearly-ordinary locus (denoted by $\textup{Spec}(\frak{R}) \subset \textup{Spec}(R)$ in the main text below). We also elaborate on this question over  affinoid subdomains of the eigencurve in Section~\ref{subsub:generalex} (see also Section~\ref{intro:univpadicL}) and verify that it has a positive answer in that case as well, assuming the truth of Conjecture~\ref{conj:bigColeman}\footnote{As far as the author understands, much of this conjecture has been settled by David Hansen.}.

The question raised in this remark is relevant in the study of $p$-adic variation of the Iwasawa invariants associated to the members of the family $\TT$, much in the spirit of \cite{emertonpollackweston} and \cite{ochiaideform}. In order to make use of Theorems A, B above and C below in this vain, it seems that one would need a positive answer to this question. See Remark~\ref{rem:lamdaareunits} below for a continuation of the discussion in the realm of Hida families, dwelling on the main results of \cite{skinnerurbanmainconj,skinnersplitcyclo}. This theme is the essence of Theorem~\ref{thm:univKSvsOchiaiKS}, Corollaries~\ref{cor:nearlyordkappa1andkitagawapadicLfunc} and \ref{cor:twovarmainconjMTTmainconj} (for Hida families); and Theorem~\ref{thm:firstquestionhasaffrimativeanswerifconjholdstrue}, Remark~\ref{rem:globalcolemantrivialization} (for families of Galois representations carried by the eigencurve) in the main body of the text. 
\end{rem}

\subsubsection{Universal Kolyvagin system and the Greenberg-Kato main conjecture}
\label{subset:greenbergmainconj}
We still work in the setting of Section~\ref{subsub:universalKS}. The next result we present (more specifically, its final part) is the fundamental application of the universal Kolyvagin
system whose existence is guaranteed by Theorem A. Let $\FFc^*$ denote the
dual Selmer structure on $\TT^*$ (in the sense of Definition~\ref{def:dualselmerstructure}). For any abelian group $N$, let $N^{\vee}$ denote the Pontryagin dual. If
$M$ is a finitely generated torsion $R$-module, set
$$\textup{char}(M)=\prod_{\frak{p}}\frak{p}^{\textup{length}(M_{\frak{p}})}$$
where the product is over height one primes of $R$.

Given a Kolyvagin system $\pmb{\kappa}\in\overline{\mathbf{KS}}(\TT,\FFc,\PP)$, let $\kappa_1\in H^1_{\FFc}(\QQ,\TT)$ denote its \emph{leading term}; see Definition~\ref{def:leadingtermofKS} for a precise definition of this notion.
\begin{thmc}Let $\pmb{\kappa}$ be a universal Kolyvagin system for $\TT$. Then, 
\begin{itemize}
\item[(i)] $\kappa_1$ is not $R$-torsion, 
\item[(ii)] The $R$-module $H^1_{\FFc^*}(\QQ,\TT^*)$ is cotorsion and $H^1_{\FFc}(\QQ,\TT)$ is free of rank one.
\item[(iii)] $\textup{char}\left(H^1_{\FFc^*}(\QQ,\TT^*)^{\vee}\right)=
\textup{char}\left(H^1_{\FFc}(\QQ,\TT)/R\cdot\kappa_1\right).$
\end{itemize}
\end{thmc}

See Proposition~\ref{prop:BKleadingtermnonzero} for (i), Theorems~\ref{thm:weakleo2} and \ref{thm:h1free} for (ii), Theorem~\ref{thm:sharpening} for (iii). We actually prove more in the main text: For a general Galois representation $\TT$ over a general regular ring $R$, we in fact prove in Theorems~\ref{thm:weakleo1} and \ref{thm:weakleo2} that the statements in (i) and (ii) are equivalent to each other. In other words, the weak Leopoldt conjecture for $\TT$ is \emph{equivalent to} (i.e., not only implied by) the non-vanishing the leading term of a deformed Kolyvagin system.

It is also worth taking a note of the systematic use of Nekov\'a\v{r}'s descent procedure (that he has developed in the context of his Selmer complexes) in the proof of Theorem~C(iii) (which is Theorem~\ref{thm:sharpening} in the main text below). This is one of the fundamental additions to the tools utilized in prior works~\cite{mr02,kbb} on the general theory of Kolyvagin systems.

In view of Theorem~B, the statement of (iii) is closely related to Greenberg's main conjecture~\cite{greenbergdeform} on the Iwasawa theory of $p$-adic deformations of motives. The reader is invited to compare our statement especially with Kato's formulation of the \emph{main conjecture without $p$-adic $L$-functions} in \cite{katolectturesonapproach}. See also Corollaries~\ref{cor:nearlyordkappa1andkitagawapadicLfunc} and \ref{cor:twovarmainconjMTTmainconj} for applications of this statement to the main conjecture for a nearly-ordinary family (which are originally due to Ochiai). More relevant to the main purpose of this article, see Section~\ref{subsub:generalex} where we construct utilizing the universal Kolyvagin system (and relying on the works of \cite{kpx,rliu}) what we call the \emph{quasicoherent sheaf of universal $p$-adic $L$-function}. We hope that this construction would be useful in deducing a form of main conjecture (\emph{with $p$-adic $L$-function}) over affinoid subdomains of the eigencurve.

\begin{rem}
\label{rem:lamdaareunits}
This paragraph is a continuation of the discussion in Remark~\ref{ref:remglobalfunctiononR}. Under the additional hypothesis that $E$ has split multiplicative reduction at $p$ (and a further mild assumption on the ramification of $\overline{T}$), one may prove the existence of a member $g$ of the Hida family for $\overline{T}$ for which the work of Skinner and Urban~\cite{skinnerurbanmainconj} applies. This, along with Theorem C(iii), shows that the \emph{interpolation factors} $\lambda_g$ (which appeared in the statement of Theorem~B) are all units for every form $g$ that belongs to the relevant Hida family; in particular for $g=f_E$ itself. 
This allows one to prove that the divisibility one obtains using the Beilinson-Kato elements in the statement of Kato's main conjecture (c.f., \cite[Theorem 12.5]{ka1}) may be turned into an equality and leads to a proof of \cite[Theorem A]{skinnersplitcyclo}. 
\end{rem}
\subsubsection{Sheaf of universal $p$-adic $L$-function and the Iwasawa theory of eigencurve}
\label{intro:univpadicL}
Suppose that we are still in the setting of Section~\ref{subsub:universalKS}. Greenberg's main conjecture predicts the existence of a several-variable $p$-adic $L$-function attached to a family $\mathcal{T}$, assuming that the representation $\mathcal{T}$ is \emph{Panchishkin-ordinary} at $p$. This $p$-adic $L$-function should in return control a `big' Selmer group associated to $\mathcal{T}$; see Conjecture 4.1 in \cite{greenbergdeform}. The existence of this $p$-adic $L$-function remains highly conjectural. 

The situation in our case with the family $\TT$ is all the way more complicated: In this case $\TT$ is no longer Panchishkin-ordinary and one does not even have a definition of a Selmer group for $\TT$. Likewise, it is not altogether clear what the \emph{three-variable} $p$-adic $L$-function should look like in this picture (or if it should exist in the first place as an element of $R$ or its analytification $R^\dagger=\Gamma(\frak{X},\oo_{\textup{Spf}(R[1/p])})$, where $\frak{X}$ denotes the (Berthelot) generic fiber of $\textup{Spf}(R[1/p])$).

 Despite the rather gloomy look of things (off the nearly-ordinary deformation subspace, that is) we portray above, there has been significant progress in the last few years over the \emph{finite-slope} locus: See \cite{bellaicheL} for the construction of a \emph{$2$-variable} $p$-adic $L$-functions on the Coleman-Mazur eigencurve $\mathcal{C}(\overline{\rho})$ and \cite{pottharst,bellaicheS,kpx,rliu} for the global triangulation on $\mathcal{C}(\overline{\rho})$ that yields to the construction of a coherent Selmer sheaf over $\mathcal{C}(\overline{\rho})$. We now explain how our construction of universal Kolyvagin systems relate to these developments and provide us with a quasicoherent sheaf on the eigencurve which in many ways resemble a \emph{universal $p$-adic $L$-function}. We give a definition of this object in this introduction (Definition~\ref{def:universalpadicLfunction}) and outline its basic properties. We refer the reader to Section~\ref{subsub:generalex} for further details. This part of our article was heavily influenced by our personal communication with J. Pottharst. We thank him heartily for his insightful e-mail and detailed response to our questions. 
  
 Let $\LL_E=\oo_E[[\Gamma]]$ be the cyclotomic Iwasawa algebra, $\frak{I}_E$ be Berthelot's analytic generic fiber of $\textup{Spf}\,\LL$ and $\LL^\dagger_E=\Gamma(\frak{I}_E,\oo_{\textup{Spf}\,\LL_E})$. Note that our $\LL^\dagger_E$  is denoted by $\LL_\infty$ in \cite{pottharstcyclo}. 
 
Let $\lambda \in \oo(\mathcal{C})^\times$ be the $U_p$-eigenvalue function and $\kappa \in \oo(\mathcal{C})$ the weight function. Let $\TT_{\mathcal{C}}$ denote the pullback of the universal deformation, which is locally free coherent sheaf on $\mathcal{C}$ of rank $2$ which equipped with a continuous $\oo(\mathcal{C})$-linear Galois action. 
 Let $\mathcal{D}_{\textup{rig}}^{\dagger}(\TT_{\mathcal{C}})$ denote the sheafification of the $(\varphi,\Gamma)$-module functor on the weak $G$-topology of $\mathcal{C}$ given as in \cite[Definition 1.2.4]{rliu}. The work of Liu and Kedlaya-Pottharst-Xiao equips us with a coherent subsheaf $\mathcal{F}$  of the sheaf $\mathcal{D}_{\textup{rig}}^{\dagger}(\TT_{\mathcal{C}})$ which is locally free of rank one away from \emph{exceptional points}, and saturated away from the \emph{unsaturated points} of $\mathcal{C}$ (see Definition~\ref{define:singularlocus} where these notions are introduced) and restricts for every $x$ belonging to this locus to a triangulation of the $(\varphi,\Gamma)$-module $D_{\textup{rig}}^{\dagger}(V_x)$ associated to $V_x$.

 
 Let $\kappa_1$ denote the initial term of \emph{any} universal Kolyvagin system and $ H^1_{\psi}(*)$ is the Iwasawa cohomology sheaf given as in \cite{kpx} (where $\psi$ is a left inverse of the Frobenius operator $\varphi$ in the context of $(\varphi,\Gamma)$-modules).
 
 \begin{define}
 \label{def:universalpadicLfunction}
The quasicoherent sheaf of \emph{universal $p$-adic $L$-function} is the invertible sheaf
$$\Xi_{\overline{\rho}}:=\textup{im}\left(R\cdot\textup{loc}_p(\kappa_1) \lra H^1_{{\psi}}(\mathcal{D}_{\textup{rig}}^\dagger(\TT_{\mathcal{C}}))/H^1_{\psi}(\mathcal{F})\right)\,.$$
Here the arrow is obtained via Sen's theory (through taking Tate twists) and using \cite[Theorem 1.9]{pottharst} along with the identification of \cite[Corollary 4.4.11]{kpx}.
\end{define} 
The following interpolation property partially justifies why  $\Xi_{\overline{\rho}}$ deserves to be called the sheaf of a ``$p$-adic $L$-function''. See Theorem~\ref{thm:firstquestionhasaffrimativeanswerifconjholdstrue} and Remark~\ref{rem:globalcolemantrivialization} for a strengthening of this interpolation property.

\begin{thmd}[Theorem~\ref{prop:singularkatointerpolation}]
  Let $E$ be a finite extension of $\QQ_p$ and $x=(\psi^\dagger,\lambda(x)) \in  \mathcal{C}_{\textup{cl-fs}}$ be an $E$-valued saturated point. Let $f_{x}$ denote the corresponding (classical) eigenform with a distinguished $U_p$-eigenvalue $\lambda(x)$. For some $\frak{t}_{x} \in \LL_E$\,, the following equality of invertible ideals of $\LL_E^\dagger$ holds true:
  $$\frak{t}_{x}\cdot\al_{D_{x}/F_{x}}\circ\psi^\dagger_\LL\left(\Xi_{\overline{\rho}}\right)=\Gamma_{\kappa(x)-1}\cdot\delta^{-1}_x\al_{p,\lambda(x)^c}(f_x^c)\cdot \LL^\dagger_E\,,$$
  \end{thmd}
Here $\psi^\dagger_{\LL} : R^\dagger \rightarrow \LL_E^\dagger$ the cyclotomic deformation of the point $\psi^\dagger$ (given as in Definition~\ref{def:cyclodeformpsi}), $\al_{D_{x}/F_{x}}$ the Pottharst-Perrin-Riou ``big logarithm'' map introduced in Definition~\ref{def:pottharstscoleman}, $\al_{p,\lambda(x)^c}(f_{\psi^\dagger}^c)$ is the $p$-adic $L$-function attached to the complex conjugate of $f_{x}$ for the $p$-stabilization determined by $\lambda(x)^c$ and finally, the $\Gamma$ and $\delta$-factors are given as in Definition~\ref{define:gammafactors}. 

As we have already pointed out in Remark~\ref{ref:remglobalfunctiononR},  whether the factors $\frak{t}_{x}$ interpolate over affinoid subdomains of $\mathcal{C}$ or not is a question that needs to be explored in order to gain understanding of the variation of arithmetic data on the eigencurve. See Question~\ref{question:txinterpolate1} for precise formulation of this question and Theorem~\ref{thm:firstquestionhasaffrimativeanswerifconjholdstrue} for a result in this direction. 

Theorem~\ref{thm:firstquestionhasaffrimativeanswerifconjholdstrue} together with the forthcoming work of Hansen shows that the image of the sections $\Xi_{\overline{\rho}}(A)$ under what we call the Coleman-trivialization (introduced as part of Conjecture~\ref{conj:bigColeman}) lies in the module generated by Bella\"iche's two-variable $p$-adic $L$-function. Along with Theorem~C (\emph{Greenberg's main conjecture without $p$-adic $L$-function}), this fact is expected to be one of the main ingredients in deducing a divisibility in the main conjecture (\emph{with $p$-adic $L$-function}) in this context.  What is crucial is that the leading term $\kappa_1$ (which is used to construct the sheaf $\Xi_{\overline{\rho}}$ as well as to control the strict Selmer group as in Theorem~C) of the universal Kolyvagin system is integral, so that the Euler-Kolyvagin system machinery applies. The only missing ingredient that prevents us to assemble a proof of the divisibility alluded to above is a well-behaved descent/control mechanism for Selmer complexes in this context. We hope to address this matter in a future work. 

What is significant about that the modules  $\Xi_{\overline{\rho}}(A)$ is that (which, as we indicated above, are related in a precise manner to Bella\"iche's two-variable $p$-adic $L$-function by Hansen's work) they readily patch together over $\mathcal{C}$, and $\Xi_{\overline{\rho}}$ indeed behaves like a sheaf of $p$-adic $L$-functions on the eigencurve.

  \subsubsection*{\bf Acknowledgements.}
The author wishes to thank Jan Nekov\'a\v{r} and Tadashi Ochiai for very useful conversations on various technical points; to Shu Sasaki for his comments on an earlier version of this article and for letting the author know of Kisin's recent (unpublished) $\textup{big } R=\textup{big } T$ theorem which played a motivating role to prepare this article as well. He is indebted to Barry Mazur for encouraging the author to extend his thesis work to this generality. Much of the work in this article was carried out during the two summers when the author was hosted by Pontificia Universidad Cat\'olica de Chile, for whose hospitality he is grateful. He acknowledges partial support through a EC-IRG and a T\"UB\.ITAK grant.

\subsection{Notations}
\label{subsec:notationhypo} For any field $K$, fix a separable closure $\bar{K}$ of $K$ and set $G_K=\textup{Gal}(\bar{K}/K)$. Let $F$ be a number field and $\lambda$ be a non-archimedean place of $F$. Fix a decomposition subgroup $G_\lambda<G_F$ and let $I_\lambda<G_\lambda$ denote the inertia subgroup. Often we will identify $G_\lambda$ by $G_{F_\lambda}$. For a finite set $\Sigma$ of places of $K$, define $K_\Sigma$ to be the maximal extension of $K$ unramified outside $\Sigma$.

Let $p$ be an odd prime and let $\QQ_\infty/\QQ$ be the cyclotomic $\ZZ_p$-extension. Let $\pmb{\mu}_{p^n}$ denote the $p^n$-th roots of unity and set $\pmb{\mu}_{p^\infty}=\varinjlim \pmb{\mu}_{p^n}$. Set $\Gamma=\textup{Gal}(\QQ_\infty/\QQ)$ and fix a topological generator $\gamma$ of  $\Gamma$.  Let $\LL=\ZZ_p[[\Gamma]]$ be the cyclotomic Iwasawa algebra.

For a ring $S$, an $S$-module $M$ and an ideal $I$ of $S$,  let $M[I]$ denote the submodule of $M$ consisting of elements that are killed by $I$.

For the ring $R=\oo[[X_1,X_2,X_3]]$, we set $R_{u,v,w}:=R/(X_1^u,X_2^v,X_3^w)$ and $R_{r,u,v,w}:=R/(\varpi^r,X_1^u,X_2^v,X_3^w)$. We define the quotient modules $\TT_{u,v,w}:=\TT\otimes_{R}R_{u,v,w}$ and $\TT_{r,u,v,w}:=\TT\otimes_{R}R_{r,u,v,w}$.

Similarly for the ring $\frak{R}=\RR[[\Gamma]]$ as above, define the rings  $\frak{R}_{u,v}=\frak{R}/(X^u,(\gamma-1)^v)$ and  $\frak{R}_{r,u,v}=\frak{R}/(\varpi^r,X^u,(\gamma-1)^v)$. Define also the quotient modules $\frak{T}_{u,v}:=\frak{T}\otimes_{\frak{R}}\frak{R}_{u,v}$ and $\frak{T}_{r,u,v}:=\frak{T}\otimes_{\frak{R}}\frak{R}_{r,u,v}$.

Finally, we define the $p$-divisible goups $\mathcal{A}_{u,v,w}:=\TT_{u,v,w}\otimes\Phi/\oo$ and $\frak{A}_{u,v}:=\frak{T}_{u,v}\otimes\Phi/\oo$.

Let $\RR_0=\RR/(\varpi,X)$ be the dimension-zero Gorenstein artinian ring, where $\RR$ is as above. As explained in~\cite[Proposition 1.4]{flttil},
\be\label{eqn:gorzeromodm}\RR_0[\mm_{\RR}] \hbox{ is a one-dimensional } \texttt{k}\hbox{-vector space}\ee
 where $\mm_{\RR}$ denotes the maximal ideal of $\RR$ and $\texttt{k}=\RR/\mm_{\RR}$. Define also $\RR_1=\RR/X$. Using the fact $\{\varpi,X\}$ is a regular sequence in $\RR$, we see that $\RR_1$ is a dimension-1 Gorenstein domain. Set $\tilde{\Phi}=\textup{Frac}({\RR_1})$. As $\RR_1$ is finitely generated and free  as an $\oo$-module, it follows that $\tilde{\Phi}$ is a finite extension of $\Phi$. Let $\frak{O}$ be the integral closure of $\RR_1$ in $\tilde{\Phi}$. Then $\frak{O}$ is a discrete valuation ring and $\frak{O}/\RR_1$ has finite cardinality. Let $\mm_\frak{O}$ be the maximal ideal of $\frak{O}$ and $\pi_\frak{O}$ be a uniformizer of $\frak{O}$. Define $T_{\frak{O}}:=\frak{T}_{1,1}\otimes_{\RR_1}\frak{O}$ (\emph{deformation of \,$\overline{T}$ to $\frak{O}$}) and $A=T_{\frak{O}}\otimes\QQ_p/\ZZ_p$.  As $\frak{O}/\RR_1$ is of finite order, it follows that $A\cong\frak{A}_{1,1}$.

\section{Local Conditions and Selmer groups}
\label{sec:selmerstr}
We recall a definition from~\cite[\S2]{mr02}. Let $M$ be any $\oo[[G_{\QQ}]]$-module.
\begin{define}
\label{selmer structure}
 A \emph{Selmer structure} $\FF$ on $M$ is a collection of the following data:
\begin{itemize}
\item A finite set $\Sigma(\FF)$ of places of $\QQ$, including $\infty$, $p$, and all primes where $M$ is ramified.
\item For every $\ell \in \Sigma(\FF)$, a local condition on $M$ (which we now view as a $\oo[[G_{\ell}]]$-module), i.e., a choice of an $\oo$-submodule $H^1_{\FF}(\QQ_{\ell},M) \subset H^1(\QQ_{\ell},M).$
 \end{itemize}
\end{define}
\begin{define}
\label{selmer_group}
For a Selmer structure $\FF$ on $M$, define the Selmer group $H_{\FF}^1(\QQ,M)$ to be
$$H_{\FF}^1(\QQ,M)=\ker\left(H^1(\QQ_{\Sigma(\FF)}/\QQ,M)\lra \prod_{\ell\in \Sigma(\FF)}\frac{H^1(\QQ_\ell,M)}{H_{\FF}^1(\QQ_\ell,M)}\right).$$
\end{define}
\begin{define}\label{selmer triple}
A \emph{Selmer triple} is a triple $(M,\FF,\PP)$ where $\FF$ is a Selmer structure on $M$ and $\PP$ is a set of rational primes, disjoint from $\Sigma(\FF)$.
\end{define}

\begin{define}
\label{def:dualselmerstructure}
Let $\FF$ be a Selmer structure on $M$. For each prime $\ell \in \Sigma(\FF)$, define $H^1_{\FF^*}(\QQ_\ell,M^*):=H^1_\FF(\QQ_\ell,M)^\perp$ as the orthogonal complement of $H^1_\FF(\QQ_\ell,M)$ under the local Tate pairing. The Selmer structure $\FF^*$ on $M^*$ defined in this manner is called the \emph{dual Selmer structure}.
\end{define}

Define the Selmer structure $\FFc$ (the \emph{canonical Selmer structure}) on $\TT_{u,v,w}$ as follows:
\begin{itemize}
\item  $\Sigma(\FFc)=\Sigma:=\{\ell: \TT \hbox{ is ramified at } \ell\} \cup \{p,\infty\}$.

\item $H ^{1} _{\FFc}(\QQ _{\ell}, \TT_{u,v,w}):= \left \{
\begin{array}{ccl}
	 H ^{1} (\QQ _{p} ,\TT_{u,v,w})& ,& \hbox{if } \ell = p,    \\
          H ^{1}_{f}(\QQ _{\ell},\TT_{u,v,w})& ,& \hbox{if } \ell \in \Sigma(\FFc)-\{p,\infty\}.
\end{array}
\right.$
\end{itemize}
Here
$$H ^{1}_{f} (\QQ _{\ell} , \TT_{u,v,w}):= \ker\left\{ H^{1} (\QQ _{\ell} , \TT_{u,v,w}) \lra H^{1}  (I_\ell , \TT_{u,v,w} \otimes_{\oo} \Phi)\right \}.$$

We denote the Selmer structure on the quotients $\TT _{r,u,v,w}$ obtained by \emph{propagating} $\FFc$ on $\TT_{u,v,w}$ to $\TT _{r,u,v,w}$ also by $\FFc$. See~\cite[Example 1.1.2]{mr02} for a definition of the \emph{propagation} of local conditions.

We define the Selmer structure $\FFc$ on $\frak{T}_{u,v}$ (and its propagation to its quotients $\frak{T} _{r,u,v}$) in a similar way:
\begin{itemize}
\item  $\Sigma(\FFc)=\{\ell: \frak{T} \hbox{ is ramified at } \ell\} \cup \{p,\infty\}$.

\item $H ^{1} _{\FFc}(\QQ _{\ell}, \frak{T}_{u,v}):= \left \{
\begin{array}{ccl}
	 H ^{1} (\QQ _{p} ,\frak{T}_{u,v})& ,& \hbox{if } \ell=p,    \\
          H ^{1}_{f}(\QQ _{\ell},\frak{T}_{u,v})& ,& \hbox{if } \ell \in \Sigma(\FFc)-\{p,\infty\}
\end{array}
\right.,$\\
\end{itemize}
We also define a Selmer structure $\FFc$ on $\frak{T}/\mm$ as follows:
\begin{itemize}
\item Set $H^1_{\FFc}(\QQ_p,\frak{T}/\mm)=H^1(\QQ_p,\frak{T}/\mm)$.
\item Propagate the local conditions at $\ell\neq p$ given by $\FFc$ on $\frak{T}_{1,1}$ to $\frak{T}/\mm$.
\end{itemize}
Note in particular that the Selmer structure $\FFc$ on $\frak{T}/\mm$ will not always be the propagation of the canonical Selmer structure on $\frak{T}_{1,1}$.
\subsection{Local conditions at $\ell \neq p$}
In this section we compare various alterations of the local conditions at $\ell \neq p$.
Define
$$H^1_{\textup{ur}}(\QQ_\ell,M):=\ker\left(H^1(\QQ_\ell,M)\lra H^1(I_\ell,M)\right)$$
for any $M$ on which $G_{\QQ_\ell}$ acts.
Using the exact sequence
$$0\lra \TT_{u,v,w} \lra \TT_{u,v,w}\otimes_{\oo}\Phi \lra \mathcal{A}_{u,v,w}\lra0$$
define also
$$H^1_{f}(\QQ_\ell,\mathcal{A}_{u,v,w})=\textup{im}\left(H^1_\textup{ur}(\QQ_\ell, \TT_{u,v,w}\otimes_{\oo}\Phi)\lra  \mathcal{A}_{u,v,w}\right).$$
Define finally
$$H^1_{f}(\QQ_\ell,\TT_{r,u,v,w})=\ker\left(H^1(\QQ_\ell,\TT_{r,u,v,w}) \lra  \frac{H^1(\QQ_\ell,\mathcal{A}_{u,v,w})}{H^1_{f}(\QQ_\ell,\mathcal{A}_{u,v,w})}\right),$$
where the map is induced from the injection $\TT_{r,u,v,w}\hookrightarrow \mathcal{A}_{u,v,w}$. Lemma 1.3.8(i) of \cite{r00} shows that
\be\label{eqn:caneqfin} H^1_{\FFc}(\QQ_\ell,\TT_{r,u,v,w})=H^1_{f}(\QQ_\ell,\TT_{r,u,v,w}).\ee

Similarly one defines $H^1_f(\QQ_\ell,\frak{A}_{u,v})$ and $H^1_f(\QQ_\ell,\frak{T}_{r,u,v})$, and verifies using \cite[Lemma 1.3.8(i)]{r00} that
\be\label{eqn:caneqfinfrak}
H^1_{\FFc}(\QQ_\ell,\frak{T}_{r,u,v})=H^1_f(\QQ_\ell,\frak{T}_{r,u,v}).
\ee
\subsection{Local conditions at $p$}
\begin{prop}
\label{prop:loccondatp}
Assuming $($\textbf{\textup{H.nA}}$)$,
\begin{itemize}
\item[(i)] $H^1_{\FFc}(\QQ_p,\TT_{r,u,v,w})=H^1(\QQ_p,\TT_{r,u,v,w})$,
\item[(ii)] $H^1_{\FFc}(\QQ_p,\frak{T}_{r,u,v})=H^1(\QQ_p,\frak{T}_{r,u,v})$.
\end{itemize}
\end{prop}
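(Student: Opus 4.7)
The plan is to reduce both assertions to the vanishing of $H^2$ at $p$, which will follow from $(\textbf{H.nA})$ via local Tate duality.

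By the definition of the canonical Selmer structure given in \S\ref{sec:selmerstr} (which propagates the full local condition $H^1(\QQ_p,\TT_{u,v,w})$ along the surjection $\TT_{u,v,w}\twoheadrightarrow \TT_{r,u,v,w}$), the subgroup $H^1_{\FFc}(\QQ_p,\TT_{r,u,v,w})$ is by construction the image of $H^1(\QQ_p,\TT_{u,v,w})$ inside $H^1(\QQ_p,\TT_{r,u,v,w})$. So assertion (i) amounts to the surjectivity of this reduction map. The long exact sequence attached to
$$0\longrightarrow \TT_{u,v,w}\xrightarrow{\varpi^r}\TT_{u,v,w}\longrightarrow \TT_{r,u,v,w}\longrightarrow 0$$
reduces this surjectivity to the vanishing of $H^2(\QQ_p,\TT_{u,v,w})[\varpi^r]$, which will in turn follow once I show $H^2(\QQ_p,\TT_{u,v,w})=0$.

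To prove this vanishing I would use d\'evissage. Since $\TT_{u,v,w}=\varprojlim_s\TT_{s,u,v,w}$ in the profinite topology and the transition maps are surjective, continuous cohomology gives $H^2(\QQ_p,\TT_{u,v,w})=\varprojlim_s H^2(\QQ_p,\TT_{s,u,v,w})$ with no $\varprojlim^1$ obstruction (Mittag-Leffler on the finite system of $\TT_{s,u,v,w}$). Hence it is enough to show $H^2(\QQ_p,\TT_{s,u,v,w})=0$ for every $s\geq 1$. The finite $G_p$-module $\TT_{s,u,v,w}$ admits a Galois-stable filtration whose successive quotients are isomorphic to $\bar T$: indeed, $R_{s,u,v,w}$ is an Artinian local ring with residue field $\texttt k$, so it admits a filtration by ideals with successive quotients isomorphic to $\texttt k=R/\MM$, and tensoring this filtration with the free $R$-module $\TT$ (whose $R$-action commutes with the $G_\QQ$-action) yields the desired filtration of $\TT_{s,u,v,w}$. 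Iterating the long exact sequence in cohomology along this filtration reduces the claim to the base case $H^2(\QQ_p,\bar T)=0$. By local Tate duality this group is the Pontryagin dual of $H^0(\QQ_p,\bar T^*)$, which vanishes by $(\textbf{H.nA})$.

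Part (ii) is proved by exactly the same argument, with $R$, $R_{s,u,v,w}$, $\TT_{s,u,v,w}$ replaced by $\frak{R}$, $\frak{R}_{s,u,v}$, $\frak{T}_{s,u,v}$ throughout; one only needs to check that $\frak{R}_{s,u,v}=\frak{R}/(\varpi^s,X^u,(\gamma-1)^v)$ is again an Artinian local $\oo$-algebra with residue field $\texttt k$, which is immediate from the fact that $\{\varpi,X\}$ is a regular sequence in $\RR$ together with the finiteness of $\ZZ_p[[\Gamma]]/(\varpi^s,(\gamma-1)^v)$. The main conceptual point of the argument is the correct identification of the propagated local condition as an \emph{image} (not a preimage) of the corresponding cohomology group; once this is granted, everything else is a routine combination of d\'evissage along a composition series whose factors are $\bar T$ and the residual vanishing supplied by $(\textbf{H.nA})$.
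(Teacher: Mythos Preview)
Your argument is correct and reaches the same conclusion as the paper, but by a genuinely different route. The paper proves (i) by first applying Nakayama's lemma to $H^2(\QQ_p,\TT)$ over the regular local ring $R$ (using that $\mathcal{M}$ is generated by a regular sequence to identify $H^2(\QQ_p,\TT)/\mathcal{M}$ with $H^2(\QQ_p,\bar T)$), and then surjects onto $H^2(\QQ_p,\TT_{u,v,w})$ via cohomological dimension $2$. For (ii) the paper remarks that this fails because $\frak{R}$ need not be regular, and instead reduces by a chain of exact sequences to $H^2(\QQ_p,\frak{T}_{1,1,1})=0$, which is then deduced from the Gorenstein condition (\ref{eqn:gorzeromodm}) via the identification $\frak{T}_{1,1,1}[\mm]\cong\bar T$ and local duality.

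Your d\'evissage along a composition series of the Artinian quotient $R_{s,u,v,w}$ (resp.\ $\frak{R}_{s,u,v}$), with successive subquotients $\bar T$, sidesteps both of these maneuvers: it treats (i) and (ii) uniformly and never invokes either the regularity of $R$ or the Gorenstein property of $\RR$. What the paper's approach buys is that it establishes the stronger intermediate fact $H^2(\QQ_p,\TT)=0$ (resp.\ $H^2(\QQ_p,\frak{T})=0$) directly, which may be of independent use; your approach buys simplicity and uniformity at the cost of passing through the inverse limit over $s$.
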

\begin{proof}
We need to check that
$$\textup{coker}\left(H^1(\QQ_p,\TT_{u,v,w})\lra H^1(\QQ_p,\TT_{r,u,v,w})\right)=0.$$
Note that
$$\textup{coker}\left(H^1(\QQ_p,\TT_{u,v,w})\lra H^1(\QQ_p,\TT_{r,u,v,w})\right)=H^2(\QQ_p,\TT_{u,v,w})[\varpi^r],$$
so it suffices to check that $H^2(\QQ_p,\TT_{u,v,w})=0$.

Now by local duality, $(\textbf{\textup{H.nA}})$ implies that $H^2(\QQ_p,\overline{T})=0$. Using the fact that the cohomological dimension of $G_{p}$ is two, it follows that
$$H^2(\QQ_p,\TT)/\mathcal{M}\cdot H^2(\QQ_p,\TT)=0,$$
where $\mathcal{M}=(\varpi,X_1,X_2,X_3)$ is the maximal ideal of the ring $R$. By Nakayama's Lemma, we therefore see that $H^2(\QQ_p,\TT)=0$. Using again the fact that the cohomological dimension of $G_{p}$ is two, we conclude that
$$0=H^2(\QQ_p,\TT)\twoheadrightarrow H^2(\QQ_p,\TT_{u,v,w})$$
and the proof of (i) follows.

The proof of (ii) is similar but more delicate as the ring $\frak{R}$ is not necessarily regular. As above, we first check that
\be\label{eqn:vanishfrak1}H^2(\QQ_p,\frak{T})=0.\ee
Considering the $G_p$-cohomology induced from the exact sequence
$$0\lra \frak{T}\stackrel{\gamma-1}{\lra}\frak{T}\lra\mathcal{T}\lra0$$
and using Nakayama's lemma, (\ref{eqn:vanishfrak1}) is reduced to verifying that $H^2(\QQ_p,\mathcal{T})=0$. Similarly, using the exact sequences
$$0\lra\mathcal{T}\stackrel{X}{\lra}\mathcal{T}\lra\mathcal{T}/X\lra0$$
$$0\lra\mathcal{T}/X\stackrel{\varpi}{\lra}\mathcal{T}/X\lra\frak{T}_{1,1}\lra0$$
in turn, we reduce to checking that
\be\label{eqn:vanishfrak2} H^2(\QQ_p, \frak{T}_{1,1,1})=0\ee
The assertion (\ref{eqn:vanishfrak2}) is proved below. We first show that (ii) follows from (\ref{eqn:vanishfrak2}).

As above,
$$\textup{coker}\left(H^1(\QQ_p,\frak{T}_{u,v})\lra H^1(\QQ_p,\frak{T}_{r,u,v})\right)=H^2(\QQ_p,\frak{T}_{u,v})[\varpi^r].$$
By (\ref{eqn:vanishfrak2}) and the fact that the cohomological dimension of $G_p$ is two, it follows that
$$0=H^2(\QQ_p,\frak{T})\twoheadrightarrow H^2(\QQ_p,\frak{T}_{u,v}).$$
This proves that  $H^2(\QQ_p,\frak{T}_{u,v})=0$ and it follows that
$$H^1_{\FFc}(\QQ_p,\frak{T}_{r,u,v})=\textup{im}\left(H^1(\QQ_p,\frak{T}_{u,v}\lra H^1(\QQ_p,\frak{T}_{r,u,v})\right)= H^1(\QQ_p,\frak{T}_{r,u,v}),$$
as desired. 
\end{proof}
\begin{claim}
\label{claim:vanishfrak1}
Assuming $(${\upshape{\textbf {H.nA}}}$)$, we have $H^2(\QQ_p, \frak{T}_{1,1,1})=0$.
\end{claim}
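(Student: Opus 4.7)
The plan is to reduce to the residual vanishing $H^2(\QQ_p,\bar T)=0$ via a devissage. First, I would identify $\frak{T}_{1,1,1}$ concretely. Since $\LL/(\gamma-1)\LL\cong\ZZ_p$, the quotient $\frak{T}/(\gamma-1)\frak{T}$ equals $\mathcal{T}\otimes_{\ZZ_p}\ZZ_p=\mathcal{T}$, and further reduction modulo $\varpi$ and $X$ identifies
\[ \frak{T}_{1,1,1}=\mathcal{T}\otimes_{\RR}\RR_0, \]
where $\RR_0=\RR/(\varpi,X)$ is the zero-dimensional local Gorenstein Artin ring considered in the notation section. Since $\mathcal{T}$ is free over $\RR$, the module $\frak{T}_{1,1,1}$ is a free $\RR_0$-module, and the $G_p$-action on it is $\RR_0$-linear.

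Next, I would invoke $(\textbf{\textup{H.nA}})$ together with local Tate duality to deduce that $H^2(\QQ_p,\bar T)=0$; this is the same input that feeds the proof of part (i).

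The core of the argument is a descending filtration of $\frak{T}_{1,1,1}$ by $\mm_{\RR_0}^k\frak{T}_{1,1,1}$. Because $\RR_0$ is Artinian, $\mm_{\RR_0}^N=0$ for $N\gg 0$, so the filtration terminates. For each $k\geq 1$, the short exact sequence
\[ 0\lra \mm_{\RR_0}^k\frak{T}_{1,1,1}\lra\mm_{\RR_0}^{k-1}\frak{T}_{1,1,1}\lra\mm_{\RR_0}^{k-1}\frak{T}_{1,1,1}/\mm_{\RR_0}^k\frak{T}_{1,1,1}\lra 0 \]
of $\RR_0[G_p]$-modules has a graded piece which, by the $\RR_0$-freeness of $\frak{T}_{1,1,1}$, is $G_p$-equivariantly isomorphic to $(\mm_{\RR_0}^{k-1}/\mm_{\RR_0}^k)\otimes_{\texttt{k}}\bar T$ with trivial Galois action on the first factor. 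This is a finite direct sum of copies of $\bar T$, so each graded piece has vanishing $H^2(\QQ_p,\cdot)$ by the previous paragraph.

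Using once more that the cohomological dimension of $G_p$ is two, the long exact sequence in cohomology then yields surjections $H^2(\QQ_p,\mm_{\RR_0}^k\frak{T}_{1,1,1})\twoheadrightarrow H^2(\QQ_p,\mm_{\RR_0}^{k-1}\frak{T}_{1,1,1})$ for every $k$. Starting from the trivial base case $H^2(\QQ_p,\mm_{\RR_0}^N\frak{T}_{1,1,1})=H^2(\QQ_p,0)=0$ and descending to $k=0$, one concludes that $H^2(\QQ_p,\frak{T}_{1,1,1})=0$, as desired. The only delicate point is the initial identification that makes $\frak{T}_{1,1,1}$ free over an Artin local ring whose residue field is $\texttt{k}$; once that is in hand, the devissage is routine. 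A direct Nakayama-style argument parallel to part (i) is less clean here because $\mm_{\RR_0}$ need not be generated by a regular sequence, so instead we exploit the Artinian structure to trivialize the induction.
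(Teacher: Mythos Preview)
Your proof is correct, and it takes a genuinely different route from the paper's own argument. The paper works on the dual side: it uses the Gorenstein property of $\RR_0$ (specifically, that $\RR_0[\mm_\RR]$ is one-dimensional over $\texttt{k}$) to identify $\frak{T}_{1,1,1}[\mm]\cong\bar T$, hence $\frak{T}_{1,1,1}^*/\mm\cong\bar T^*$; then (\textbf{H.nA}) gives $H^0(\QQ_p,\frak{T}_{1,1,1}^*/\mm)=0$, a cited lemma from \cite{mr02} (Lemma~2.1.4) upgrades this to $H^0(\QQ_p,\frak{T}_{1,1,1}^*)=0$, and local duality finishes. Your approach instead stays on the $H^2$ side, applies local duality immediately to obtain $H^2(\QQ_p,\bar T)=0$, and then runs a d\'evissage along the $\mm_{\RR_0}$-adic filtration, using only freeness of $\frak{T}_{1,1,1}$ over the Artinian ring $\RR_0$ to identify each graded piece as a sum of copies of $\bar T$. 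Your argument is more self-contained (no external lemma, no use of the Gorenstein structure of $\RR_0$ beyond Artinianity), while the paper's version is terser because it outsources the d\'evissage to \cite{mr02}; conceptually the two are dual to one another, and your closing remark correctly diagnoses why the regular-sequence Nakayama argument of part~(i) does not port over cleanly.
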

\begin{proof}
The property (\ref{eqn:gorzeromodm}) shows that $\frak{T}_{1,1,1}[\mm]\cong \overline{T}$, hence that
$\frak{T}_{1,1,1}^*/\mm\cong \overline{T}^*.$ 
Since we assumed \textbf{H.nA}, it thus follows that
$$H^0(\QQ_p,\frak{T}_{1,1,1}^*/\mm)=0.$$
The module $\frak{T}_{1,1,1}^*$ is free of of finite rank over the Gorenstein artinian ring $\RR_0$, hence by \cite[Lemma 2.1.4]{mr02} we conclude that $H^0(\QQ_p,\frak{T}_{1,1,1}^*)=0$ as well. Claim now follows by local duality.
\end{proof}
\subsection{Kolyvagin primes and transverse conditions}
Let $\tau \in G_\QQ$ be as in the statement of  the hypothesis (\textbf{H.2}).
\begin{define}
\label{def:kolyprimes}For $\bar{\frak{n}}=(r,u,v,w) \in (\ZZ_{>0})^4$, define
\begin{itemize}
\item[(i)]  $H_{\bar{\frak{n}}}=\ker\left(G_\QQ\ra \textup{Aut}(\TT_{r,u,v,w})\oplus\textup{Aut}(\pmb{\mu_{p^{r}}})\right)$,
\item[(ii)]  $L_{\bar{\frak{n}}}=\bar{\QQ}^{H_{\bar{\frak{n}}}}$,
\item[(iii)] $\mathcal{P}_{\bar{\frak{n}}}=\{\hbox{primes }\ell: \textup{Fr}_\ell \hbox{ is conjugate to } \tau \hbox{ in } \textup{Gal}(L_{\bar{\frak{n}}}/\QQ)\}$.
\end{itemize}
The collection $\PP_{\bar{\frak{n}}}$ is called the collection of \emph{Kolyvagin primes} for $\TT_{r,u,v,w}$. Set $\PP=\PP_{(1,1,1,1)}$ and define $\NN_{\bar{\frak{n}}}$ to be the set of square free products of primes in $\PP_{\bar{\frak{n}}}$.
\end{define}
We similarly define for $\bar{\frak{s}}=(r,u,v)$ the sollection of Kolyvagin primes $\PP_{\bar{\frak{s}}}$ for $\frak{T}_{r,u,v}$ and the set $\NN_{\bar{\frak{s}}}$ of square free products of primes in $\PP_{\bar{\frak{s}}}$.
\begin{define}
\label{def:partialorder}
The partial order $\prec$ on the collection of quadruples $(r,u,v,w) \in (\ZZ_{>0})^4$ is defined by setting
$$\bar{\frak{n}}=(r,u,v,w) \prec(r^ \prime,u^ \prime,v^ \prime,w^ \prime)= \bar{\frak{n}}^\prime$$
if $r\leq r^\prime$, $u\leq u^\prime$, $v\leq v^\prime$ and $w\leq w^\prime$.

We denote the partial order defined on triples of positive integers in an identical manner also by $\prec$.
\end{define}
 To ease notation, set  $\TT_{\bar{\frak{n}}}:=\TT_{r,u,v,w}$ and $R_{\bar{\frak{n}}}:=R_{r,u,v,w}$  for $\bar{\frak{n}}=(r,u,v,w)$. Define similarly $\frak{T}_{\bar{\frak{n}}}:=\frak{T}_{r,u,v}$ and $\frak{R}_{\bar{\frak{n}}}:=\frak{R}_{r,u,v}$.
\begin{rem}
\label{rem:ellcong1modp}
Suppose $\ell$ is a Kolyvagin prime in $\PP_{\bar{\frak{n}}}$ (resp., in $\PP_{\bar{\frak{s}}}$), where $\bar{\frak{n}}$ (resp., $\bar{\frak{s}}$)  are as above. Then as $\tau$ acts trivially on $\pmb{\mu}_{p^r}$ and $\textup{Fr}_\ell$ is conjugate to $\tau$ in $ \textup{Gal}(L_{\bar{\frak{n}}}/\QQ)$, it follows that $\textup{Fr}_\ell$ acts trivially on $\pmb{\mu}_{p^r}$ and hence that $\ell\equiv 1 \mod p^r.$ In particular
$$|\mathbb{F}_{\ell}^\times|\cdot \TT_{r,u,v,w}=(\ell-1)\TT_{r,u,v,w}=0$$
$$(\textup{resp., } |\mathbb{F}_{\ell}^\times|\cdot \frak{T}_{r,u,v}=0).$$
\end{rem}
Throughout this section, fix a Kolyvagin prime $\ell \in \PP_{\bar{\frak{n}}}$ (or in $\PP_{\bar{\frak{s}}}$, whenever we talk about quotients of $\frak{T}$).

\begin{define}
\label{def:transverse}Let $T$ be one of $\TT_{r,u,v,w}$, $\frak{T}_{r,u,v}$ or $\frak{T}/\mm$.
\begin{itemize}
\item[(i)]The submodule of $H^1(\QQ_\ell,T)$ given by
$$H^1_{\textup{tr}}(\QQ_\ell,T)=\ker\left(H^1(\QQ_\ell,T)\lra H^1(\QQ_\ell(\pmb{\mu_\ell}),T)\right)$$
is called the \emph{transverse submodule}.
\item[(ii)] The \emph{singular quotient} $H^1_s(\QQ_\ell,T)$ is defined by the exactness of the sequence
\be\label{eqn:singseq} 0\lra H^1_f(\QQ_\ell,T)\lra H^1(\QQ_\ell,T)\lra H^1_s(\QQ_\ell,T)\lra 0 \ee
\end{itemize}
\end{define}

\begin{define}
\label{def:modifiedSelmer}
Let $T$ be one of $\TT_{r,u,v,w}$, $\frak{T}_{r,u,v}$ or $\frak{T}/\mm$ and suppose $n\in \NN_{\bar{\frak{n}}}$ (or $n\in \NN_{\bar{\frak{s}}}$ if we are talking about quotients of $\frak{T}$). The \emph{modified Selmer structure} $\FFc(n)$ on $T$ is defined with the following data:
\begin{itemize}
\item $\Sigma(\FFc(n))=\Sigma(\FFc)\cup\{\hbox{primes }\ell: \ell \mid  n\}$.
\item If $\ell\nmid n$ then $H^1_{\FFc(n)}(\QQ_\ell,T)=H^1_{\FFc}(\QQ_\ell,T)$.
\item If $\ell \mid n$ then $H^1_{\FFc(n)}(\QQ_\ell,T)=H^1_{\textup{tr}}(\QQ_\ell,T)$.
\end{itemize}
\end{define}
\begin{rem}
Proposition 1.3.2 of \cite{mr02} shows that $\FFc(n)^*=\FFc^*(n)$.
\end{rem}
\begin{lemma}
\label{lem:transverseproperties}
Let $T$ be one of the rings $\TT_{r,u,v,w}$, $\frak{T}_{r,u,v}$ or $\frak{T}/\mm$. Then the \emph{transverse subgroup} $H^1_{\textup{tr}}(\QQ_\ell,T) \subset H^1(\QQ_\ell,T)$ projects isomorphically onto $H^1_s(\QQ_\ell,T)$. In other words $($\ref{eqn:singseq}$)$ above has a functorial splitting.
\end{lemma}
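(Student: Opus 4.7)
The plan is to establish the stronger decomposition
$$H^1(\QQ_\ell,T)=H^1_f(\QQ_\ell,T)\oplus H^1_{\textup{tr}}(\QQ_\ell,T),$$
from which the isomorphism $H^1_{\textup{tr}}(\QQ_\ell,T)\stackrel{\sim}{\lra}H^1_s(\QQ_\ell,T)$ and the functorial splitting of $(\ref{eqn:singseq})$ are immediate, since both $H^1_f$ and $H^1_{\textup{tr}}$ are defined as kernels of restriction maps and so are functorial in $T$. Throughout I would use that $\ell\in\PP$ is disjoint from $\Sigma(\FFc)$, so $T$ is unramified at $\ell$, and that by Remark~\ref{rem:ellcong1modp} the order $\ell-1$ of $\Delta:=\Gal(\QQ_\ell(\pmb{\mu_\ell})/\QQ_\ell)\cong(\ZZ/\ell\ZZ)^\times$ annihilates $T$.

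First, I would apply inflation-restriction for the totally tamely ramified extension $\QQ_\ell(\pmb{\mu_\ell})/\QQ_\ell$. Because $T$ is unramified and the residue extension is trivial, $T^{G_{\QQ_\ell(\pmb{\mu_\ell})}}=T^{\textup{Fr}_\ell}$; because $\Delta$ is a quotient of tame inertia, it acts trivially on this submodule. With $\Delta$ cyclic of order killing $T$, this identifies
$$H^1_{\textup{tr}}(\QQ_\ell,T)\cong H^1(\Delta,T^{\textup{Fr}_\ell})=\Hom(\Delta,T^{\textup{Fr}_\ell})\cong T^{\textup{Fr}_\ell}.$$
Parallel inflation-restriction for $\QQ_\ell^{\textup{ur}}/\QQ_\ell$ yields $H^1_f(\QQ_\ell,T)\cong T/(\textup{Fr}_\ell-1)T$; and identifying the pro-$p$ part of the tame inertia with $\ZZ_p(1)$ on which $\textup{Fr}_\ell$ acts by $\ell\equiv 1\pmod{p^r}$ (trivially on any module killed by $p^r$) gives $H^1_s(\QQ_\ell,T)\cong T^{\textup{Fr}_\ell}$ as well.

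The heart of the argument is the verification that $H^1_f(\QQ_\ell,T)\cap H^1_{\textup{tr}}(\QQ_\ell,T)=0$. Since $\QQ_\ell(\pmb{\mu_\ell})/\QQ_\ell$ is totally ramified with trivial residue extension, its Frobenius coincides with $\textup{Fr}_\ell$, so the restriction map $H^1_f(\QQ_\ell,T)\lra H^1_f(\QQ_\ell(\pmb{\mu_\ell}),T)$ becomes the identity on $T/(\textup{Fr}_\ell-1)T$ and is in particular injective; any class in the intersection lies in the kernel of restriction to $\QQ_\ell(\pmb{\mu_\ell})$ and therefore vanishes. A size count then forces the decomposition: from $(\ref{eqn:singseq})$ one has $|H^1(\QQ_\ell,T)|=|H^1_f|\cdot|H^1_s|=|T^{\textup{Fr}_\ell}|^2$, and the product $|H^1_f|\cdot|H^1_{\textup{tr}}|$ equals the same quantity (using $|T^{\textup{Fr}_\ell}|=|T/(\textup{Fr}_\ell-1)T|$ for the finite $T$), so the internal sum must exhaust $H^1(\QQ_\ell,T)$. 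The entire argument is uniform across $T=\TT_{r,u,v,w}$, $\frak{T}_{r,u,v}$, and $\frak{T}/\mm$, each of which is a finite $p$-primary $G_{\QQ_\ell}$-module unramified at $\ell$; I do not anticipate any serious obstacle beyond the bookkeeping described above, the lemma being essentially a local computation parallel to standard material in \cite{mr02}.
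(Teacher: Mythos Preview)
Your argument is correct and is precisely the standard local computation; the paper does not give an independent proof but simply cites \cite[Lemma~1.2.4]{mr02}, which is proved for arbitrary Artinian coefficient rings by exactly the decomposition $H^1(\QQ_\ell,T)=H^1_f(\QQ_\ell,T)\oplus H^1_{\textup{tr}}(\QQ_\ell,T)$ you establish. In other words, you have written out what the paper delegates to Mazur--Rubin, and the two approaches coincide.
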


\begin{proof}
This is \cite[Lemma 1.2.4]{mr02} which is proved for a general artinian coefficient ring.
\end{proof}
\begin{prop} Let $\bar{\frak{n}}=(r,u,v,w)$ and $\bar{\frak{s}}=(r,u,v)$ be as above.
\label{prop:transversefurtherprop} $\,$
\begin{itemize}
\item[(i)] There are canonical functorial isomorphisms $$H^1_f(\QQ_\ell,\TT_{\bar{\frak{n}}})\cong \TT_{\bar{\frak{n}}}\big{/}(\textup{Fr}_\ell-1)\TT_{\bar{\frak{n}}},$$
 $$H^1_s(\QQ_\ell,\TT_{\bar{\frak{n}}})\cong \left(\TT_{\bar{\frak{n}}}\right)^{\textup{Fr}_\ell=1}\,\,.$$
\item[(ii)] There is a canonical isomorphism (called the finite-singular comparison isomorphism)
$$\phi_\ell^{fs}: H^1_f(\QQ_\ell,\TT_{\bar{\frak{n}}})\lra H^1_s(\QQ_\ell,\TT_{\bar{\frak{n}}})\otimes \mathbb{F}_\ell^\times\,\,.$$
\item[(iii)] The $R_{\bar{\frak{n}}}$ modules $H^1_f(\QQ_\ell,\TT_{\bar{\frak{n}}})$, $H^1_s(\QQ_\ell,\TT_{\bar{\frak{n}}})$ and $H^1_{\textup{tr}}(\QQ_\ell,\TT_{\bar{\frak{n}}})$ are free of rank one.
\end{itemize}
The analogous statements hold true when $\TT_{\bar{\frak{n}}}$ is replaced by $\frak{T}_{\bar{\frak{s}}}$ or $\frak{T}/\mm$ \textup{(}and the ring $R_{\bar{\frak{n}}}$ by $\frak{R}_{\bar{\frak{s}}}$ or $\texttt{k}=\frak{R}/\mm$\textup{)}.
\end{prop}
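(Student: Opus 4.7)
The plan is to adapt the proof templates of Mazur--Rubin \cite[Lemmas 1.2.1 and 1.2.3]{mr02}, upgrading them from artinian principal coefficient rings to the Gorenstein artinian local rings at hand. All relevant coefficient rings are Gorenstein artinian local: $R_{\bar{\frak{n}}}$ and $\frak{R}_{\bar{\frak{s}}}$ are quotients of the regular rings $R$ and $\RR[[\Gamma]]$ (the latter first modulo $X$) by regular sequences, and $\RR_0$ is Gorenstein via (\ref{eqn:gorzeromodm}). Moreover $\TT_{\bar{\frak{n}}}$, $\frak{T}_{\bar{\frak{s}}}$, and $\frak{T}/\mm$ are free modules of the same rank as $\TT$, $\mathcal{T}$, $\bar T$ respectively, as base changes of free modules.

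For part (i), I apply the inflation--restriction sequence attached to $I_\ell \triangleleft G_\ell$. Since $\ell \in \PP_{\bar{\frak{n}}}$ is unramified for $\TT_{\bar{\frak{n}}}$ and $G_\ell/I_\ell \cong \widehat{\ZZ}$ has cohomological dimension one, we obtain
$$0 \lra \TT_{\bar{\frak{n}}}/(\textup{Fr}_\ell - 1)\TT_{\bar{\frak{n}}} \lra H^1(\QQ_\ell, \TT_{\bar{\frak{n}}}) \lra H^1(I_\ell, \TT_{\bar{\frak{n}}})^{\textup{Fr}_\ell} \lra 0,$$
where the left term (via evaluation at Frobenius) agrees with $H^1_f(\QQ_\ell,\TT_{\bar{\frak{n}}})$ by (\ref{eqn:caneqfin}), and hence the right term is $H^1_s(\QQ_\ell,\TT_{\bar{\frak{n}}})$ by (\ref{eqn:singseq}). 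Since $\TT_{\bar{\frak{n}}}$ is $p^r$-torsion and $\ell \neq p$, only the pro-$p$ part of tame inertia contributes, giving $H^1(I_\ell,\TT_{\bar{\frak{n}}}) \cong \textup{Hom}(\ZZ_p(1),\TT_{\bar{\frak{n}}}) \cong \TT_{\bar{\frak{n}}}(-1)$. By Remark~\ref{rem:ellcong1modp}, $\ell \equiv 1 \pmod{p^r}$, so the Tate twist is trivial on this $p^r$-torsion module and the Frobenius-invariants reduce to $\TT_{\bar{\frak{n}}}^{\textup{Fr}_\ell=1}$.

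For part (iii), the freeness of $H^1_f$ of rank one is immediate from (H.2): the $\Gal(L_{\bar{\frak{n}}}/\QQ)$-conjugacy of $\textup{Fr}_\ell$ and $\tau$ gives an $R_{\bar{\frak{n}}}$-linear isomorphism $\TT_{\bar{\frak{n}}}/(\textup{Fr}_\ell-1)\TT_{\bar{\frak{n}}} \cong \TT_{\bar{\frak{n}}}/(\tau-1)\TT_{\bar{\frak{n}}}$, and the latter is the base change of the rank-one free $R$-module from (H.2). The freeness of $H^1_s \cong \TT_{\bar{\frak{n}}}^{\textup{Fr}_\ell=1}$ is the main obstacle I anticipate, since over the non-regular ring $R_{\bar{\frak{n}}}$ rank-one freeness of coinvariants does not formally imply that of invariants. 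I resolve this by a splitting argument: the surjection $\TT_{\bar{\frak{n}}} \twoheadrightarrow R_{\bar{\frak{n}}}$ onto the rank-one quotient splits (its target is free), so $(\textup{Fr}_\ell-1)\TT_{\bar{\frak{n}}}$ is a direct summand of the free module $\TT_{\bar{\frak{n}}}$ and hence free itself (of rank $n-1$, with $n$ the $R$-rank of $\TT$). The exact sequence
$$0 \lra \TT_{\bar{\frak{n}}}^{\textup{Fr}_\ell=1} \lra \TT_{\bar{\frak{n}}} \xrightarrow{\textup{Fr}_\ell-1} (\textup{Fr}_\ell-1)\TT_{\bar{\frak{n}}} \lra 0$$
then splits because its right term is free, so $\TT_{\bar{\frak{n}}}^{\textup{Fr}_\ell=1}$ is a direct summand of $\TT_{\bar{\frak{n}}}$ of rank one, hence free of rank one. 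Finally $H^1_{\textup{tr}} \cong H^1_s$ by Lemma~\ref{lem:transverseproperties}, giving the freeness of the transverse subgroup.

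For part (ii), the finite-singular comparison isomorphism is defined as in \cite[Definition 1.2.2]{mr02}: fix a generator of $I_\ell^{\textup{tame},(p)}$ (equivalently, of the Tate module of $\mathbb{F}_\ell^\times$) and chain together the identifications of (i) to obtain an $R_{\bar{\frak{n}}}$-linear map $H^1_f(\QQ_\ell,\TT_{\bar{\frak{n}}}) \to H^1_s(\QQ_\ell,\TT_{\bar{\frak{n}}})\otimes\mathbb{F}_\ell^\times$; that this is an isomorphism is immediate from (iii), as both source and target are rank-one free $R_{\bar{\frak{n}}}$-modules. The cases of $\frak{T}_{\bar{\frak{s}}}$ and $\frak{T}/\mm$ follow by the same arguments over $\frak{R}_{\bar{\frak{s}}}$ and $\texttt{k}$, respectively, with (H.2) applied in its $\frak{R}$-form.
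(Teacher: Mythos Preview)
Your argument is largely correct and close in spirit to the paper's proof, which simply cites \cite[Lemmas 1.2.1 and 1.2.3]{mr02} for (i) and (ii), deduces (iii) from these together with Lemma~\ref{lem:transverseproperties}, and notes that everything applies thanks to Remark~\ref{rem:ellcong1modp}. Your direct splitting argument for (iii) is a pleasant alternative: rather than reading off the freeness of $\TT_{\bar{\frak{n}}}^{\textup{Fr}_\ell=1}$ from the finite--singular isomorphism, you deduce it from the freeness of the coinvariants via two successive splittings, which works cleanly over any local ring and avoids invoking the explicit polynomial-in-Frobenius map of \cite{mr02}.

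There is, however, a genuine (if small) gap in your treatment of (ii). You write that $\phi^{fs}_\ell$ ``is an isomorphism \dots\ immediate from (iii), as both source and target are rank-one free $R_{\bar{\frak{n}}}$-modules.'' But a map between two free rank-one modules over an artinian local ring need not be an isomorphism; it is one precisely when it is surjective, equivalently nonzero modulo the maximal ideal. The content of \cite[Lemma 1.2.3]{mr02} is exactly that the \emph{specific} map $\phi^{fs}_\ell$ (built from a polynomial $Q(\textup{Fr}_\ell^{-1})$ with $P(x)=(x-1)Q(x)$ the characteristic polynomial of $\textup{Fr}_\ell$) is an isomorphism under the hypothesis that the coinvariants are free of rank one; this is what the paper invokes. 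You should either cite that lemma directly (as the paper does), or else supply the missing step: for instance, reduce modulo the maximal ideal, where $R_{\bar{\frak{n}}}$ becomes the field $\texttt{k}$ and the claim is that $Q(\textup{Fr}_\ell^{-1})$ induces a nonzero map between one-dimensional $\texttt{k}$-vector spaces, which one checks from the factorisation $P(x)=(x-1)Q(x)$ and the fact that $(x-1)^2 \nmid P(x)$ over $\texttt{k}$.
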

\begin{proof}
(i) is \cite[Lemma 1.2.1]{mr02}. The finite-singular comparison isomorphism is defined in \cite[Definition 1.2.2]{mr02} and (ii) is \cite[Lemma 1.2.3]{mr02}. (iii) follows from (i), (ii) and Lemma~\ref{lem:transverseproperties}.

Note that all the results quoted from \cite{mr02} apply in our setting thanks to Remark~\ref{rem:ellcong1modp}.
\end{proof}
\section{Universal Kolyvagin systems}
\label{subsec:kolsysforbig}
Assume in this section that $\chi(\TT)=\chi(\frak{T})=1$ as well as the truth of the hypotheses  (\textbf{H1}) - (\textbf{H4}), (\textbf{H.Tam}), and (\textbf{H.nA}).

\subsection{Kolyvagin systems over Artinian rings}
Throughout this section fix $\bar{\frak{n}}=(r,u,v,w)$ and $\bar{\frak{i}}\in (\ZZ_{>0})^4$ (resp., $\bar{\frak{s}}=(r,u,v)$ and $\bar{\frak{j}}\in (\ZZ_{>0})^3$) such that $\bar{\frak{n}}\prec \bar{\frak{i}}$ (resp., $\bar{\frak{s}}\prec \bar{\frak{j}}$). Let $T$ be one of $\TT_{\bar{\frak{n}}}$, $\frak{T}_{\bar{\frak{s}}}$ or $\frak{T}/\mm$, and let $S$ be the corresponding quotient ring $R_{\bar{\frak{n}}}$, $\frak{R}_{\bar{\frak{s}}}$ or $\texttt{k}$. Let $\PP$ denote the collection of Kolyvagin primes $\PP_{\bar{\frak{i}}}$ (resp., $\PP_{\bar{\frak{j}}}$) and $\NN$ denote the set of square free products of primes in $\PP$. 

Many of the definitions and arguments in this section follow closely \cite{mr02} and \cite{kbb}.
\begin{define}
\label{simplicial sheaf}$\,$
\begin{enumerate}
\item[(i)] If $X$ is a graph and $\textup{Mod}_{R}$ is the category of $R$-modules, a \emph{simplicial sheaf} $\mathcal{S}$ on $X$ with values in $\textup{Mod}_R$ is a rule assigning
\begin{itemize}
\item an $R$-module $\mathcal{S}(v)$ for every vertex $v$ of $X$,
\item an $R$-module $\mathcal{S}(e)$ for every edge $e$ of $X$,
\item an $R$-module homomorphism $\psi_{v}^e:\,\mathcal{S}(v)\ra\mathcal{S}(e)$ whenever the vertex $v$ is an endpoint of the edge $e$.
\end{itemize}
\item[(ii)] A \emph{global section} of $\mathcal{S}$ is a collection $\{\kappa_v \in \mathcal{S}(v): v \hbox{ is a vertex of } X\}$ such that, for every edge $e=\{v,v^{\prime}\}$ of $X$, we have  $\psi_{v}^e(\kappa_v)=\psi_{v^{\prime}}^e(\kappa_{v^{\prime}})$ in $\mathcal{S}(e)$. We write $\Gamma(\mathcal{S})$ for the $R$-module of global sections of $\mathcal{S}$.
\end{enumerate}
\end{define}

\begin{define} (Mazur-Rubin) 
\label{selmer sheaf}
For the Selmer triple $(T,\FFc,\PP)$, we define a graph $\XX=\XX(\PP)$ by taking the set of vertices of $\XX$ to be $\NN$, and the edges to be $\{n,n\ell\}$ whenever $n,n\ell \in \NN$ (with $\ell$ prime).
\begin{enumerate}
\item[(i)] The \emph{Selmer sheaf} $\mathcal{H}$ is the simplicial sheaf on $\XX$ given as follows. Set ${\displaystyle G_n:=\otimes_{\ell | n}\, \mathbb{F}_{\ell}^\times}$. We take
 \begin{itemize}
\item $\mathcal{H}(n) := H^1_{\FFc(n)}(\QQ,T) \otimes G_n$ for $n\in \NN$,
\item  if $e$ is the edge $\{n,n\ell\}$ then $\mathcal{H}(e):= H^1_{s}(\QQ_{\ell},T)\otimes G_{n\ell}$.
\end{itemize}
 We define the vertex-to-edge maps to be
\begin{itemize}
\item $\psi_{n\ell}^{e}:\,H^1_{\FFc(n\ell)}(\QQ,T) \otimes G_{n\ell} \ra H^1_{s}(\QQ_{\ell},T)\otimes G_{n\ell}$ is localization followed by the projection to the singular cohomology $H^1_s(\QQ_\ell,T)$.
\item $\psi_{n}^{e}:\,H^1_{\FFc(n)}(\QQ,T) \otimes G_{n} \ra H^1_{s}(\QQ_{\ell},T)\otimes G_{n\ell}$ is the composition of localization at $\ell$ with the finite-singular comparison map $\phi^{fs}_{\ell}$.
\end{itemize}
\item[(ii)]  A \emph{Kolyvagin system} for the triple $(T,\FFc, \PP)$ is simply a global section of the Selmer sheaf $\mathcal{H}$.
\end{enumerate}
 \end{define}
Let $\Gamma(\mathcal{H})$ denote the $S$-module of global sections of $\mathcal{H}$. 

\begin{define}
\label{def:KSglobalsections} We set $\textbf{KS}(T,\FFc,\PP):=\Gamma(\mathcal{H})$ and call it the \emph{Kolyvagin systems for the Selmer structure $\FFc$ on $T$}. More explicitly, an element $\pmb{\kappa}\in\textbf{KS}(T,\FFc,\PP)$ is a collection $\{\kappa_n\}$ of cohomology classes indexed by $n\in\NN$ such that for every $n,n\ell \in \NN$ we have:
\begin{itemize}
\item  $\kappa_n\in H^1_{\FFc(n)}(\QQ,T)\otimes G_n$,
\item $\phi_{\ell}^{fs}\left(\textup{loc}_\ell(\kappa_n)\right)=\textup{loc}_\ell^s(\kappa_{n\ell})$.
\end{itemize}
Here, $\textup{loc}_\ell^s$ stands for the composite
$$H^1(\QQ,T)\stackrel{\textup{loc}_\ell}{\lra}H^1(\QQ_\ell,T)\lra H^1_s(\QQ_\ell,T).$$
\end{define}
\subsection{Kolyvagin systems over \emph{big} rings}
We are now ready to state the main technical results of this article. We shall give a proof of these assertions in Sections \ref{sec:coreverticesandKS}-\ref{sec:coreverticesexist} and exhibit various arithmetic consequences of them in Section~\ref{sec:applications}.
\begin{define}
\label{def:KSbigrings}
The $R$-module
$$\overline{\mathbf{KS}}(\TT,\FFc,\PP):=\varprojlim_{\bar{\frak{n}}} \left(\varinjlim_{\bar{\frak{i}}} \mathbf{KS}(\TT_{\bar{\frak{n}}},\FFc, \PP_{\bar{\frak{i}}})\right)$$
is called the \emph{module of universal Kolyvagin systems}. Likewise, we define the $\frak{R}$-module $\overline{\mathbf{KS}}(\frak{T},\FFc,\PP):=\varprojlim_{\bar{\frak{s}}} \varinjlim_{\bar{\frak{j}}} \mathbf{KS}(\frak{T}_{\bar{\frak{s}}},\FFc, \PP_{\bar{\frak{j}}})$.
\end{define}

\begin{thm}
\label{thm:KSmain}
Under the running hypotheses the following hold.
\begin{itemize}
\item[(i)] The $R$-module $\overline{\mathbf{KS}}(\TT,\FFc,\PP)$ is free of rank one, generated by a Kolyvagin system $\pmb{\kappa}$ whose image $\bar{\pmb{\kappa}} \in {\mathbf{KS}}(\overline{T},\FFc,\PP)$ is non-zero.\\
\item[(ii)]The $\frak{R}$-module $\overline{\mathbf{KS}}(\frak{T},\FFc,\PP)$ is free of rank one. When the ring $\RR$ is regular, the module $\overline{\mathbf{KS}}(\frak{T},\FFc,\PP)$ is generated by $\pmb{\kappa}$ whose image $\bar{\pmb{\kappa}} \in {\mathbf{KS}}(\overline{T},\FFc,\PP)$ is non-zero.
\end{itemize}
\end{thm}

The proof of this theorem will be given in Section~\ref{sec:coreverticesandKS}.

\begin{define}
\label{def:leadingtermofKS}
Given a Kolyvagin system 
$$\pmb{\kappa}=\{\kappa_m(\bar{\frak{n}})\}_{\bar{\frak{n}}, m\in \NN_{\bar{\frak{n}}}}\in \overline{\mathbf{KS}}(\TT,\FFc,\PP)=\varprojlim_{\bar{\frak{n}}}\mathbf{KS}(\TT_{\bar{\frak{n}}},\FFc, \PP_{\bar{\frak{n}}})$$ 
(where the last equality is by Lemma~\ref{lem:j}),  we define the leading term of $\pmb{\kappa}$ to be the element
$$\kappa_1=\{\kappa_1(\bar{\frak{n}})\}_{\bar{\frak{n}}} \in \varprojlim_{\bar{\frak{n}}} H^1(\QQ_{\Sigma}/\QQ,\TT_{\bar{\frak{n}}})=H^1(\QQ_{\Sigma}/\QQ,\TT).$$
We also define the leading term of a Kolyvagin system for $\frak{T}$ in a similar manner.
\end{define}

\section{Applications of universal Kolyvagin systems}
 \label{sec:applications}
 
  \subsection{Weak Leopoldt Conjecture and Greenberg's main conjecture} 
 \label{subsec:weakleo} The notation and the hypotheses of Section \ref{subsec:kolsysforbig} are in effect throughout. 
We also assume until the end of this section that the ring $\RR$ is regular (therefore, so is the ring $\frak{R}$).

\begin{rem}
 \label{rem:regular}
  When $\RR$ is Hida's universal ordinary Hecke algebra (which we have a closer look at in Section \ref{subsec:example2} below), the ring $\mathcal{R}$ is often regular as explained in \cite[Lemma 2.7]{fouochiai}.
 \end{rem} 
 
  Our goal in this section is to prove Theorems~\ref{thm:weakleo1}, \ref{thm:weakleo2} and \ref{thm:sharpening} below. 

 \begin{thm}
 \label{thm:weakleo1}
 If the ${R}$-module $H^1_{\FFc^*}(\QQ,{\TT}^*)^{\vee}$ is torsion, then there is a Kolyvagin system $\pmb{\kappa} \in \overline{\mathbf{KS}}({\TT},\FFc,\PP)$ whose leading term $\kappa_1$ is non-zero. 
 
 The identical statement holds true when ${R}$ is replaced by $\frak{R}$ and ${\TT}$ by $\frak{T}$.
 \end{thm}
 \begin{thm}
 \label{thm:weakleo2}
 Suppose that there is a Kolyvagin system $\pmb{\kappa} \in \overline{\mathbf{KS}}({\TT},\FFc,\PP)$ whose leading term $\kappa_1$ is non-vanishing. 
 Then the ${R}$-module $H^1_{\FFc^*}(\QQ,{\TT}^*)^{\vee}$ is $R$-torsion.
 
 Similar statement holds true when ${R}$ is replaced by $\frak{R}$ and ${\TT}$ by $\frak{T}$.
  \end{thm}
   
  Suppose $S$ is a regular ring and $M$ is a finitely generated torsion $S$-module. We define
 $$\textup{char}_S(M):=\prod_{\frak{p}}\frak{p}^{\textup{length}_{R_{\frak{p}}}(M_{\frak{p}})}\,,$$
 \emph{the characteristic ideal} of $M$, where the product is over all height 1 primes of $S$. Observe that $\textup{char}_S(M)$ is a principal ideal. In case $M$ is not torsion, we set $\textup{char}_S(M)=0$. When the ring $S$ is understood, we simply write $\textup{char}(M)$ in place of $\textup{char}_S(M)$.
    
    \begin{thm}
\label{thm:sharpening} For $\kappa_1 \in H^1_{\FFc}(\QQ,\TT)$ as above, we have 
$$\textup{char}\left(H^1_{\FFc^*}(\QQ,\TT^*)^{\vee}\right) = \textup{char}\left(H^1_{\FFc}(\QQ,\TT)/R\cdot\kappa_1\right).$$
\end{thm}

  \begin{rem}
  \label{rem:nonzeroimpliesnontorsionimpliesnotinp}
    In this remark, we record the following simple observation: Assuming (\textbf{H3}), the $R$-module $H^1(\QQ_{\Sigma}/\QQ,\TT)$ is $R$-torsion-free. Indeed, we first note 
  \be\label{mrgeneralvanishing}H^0(\QQ,X)=0\ee
  for every subquotient $X$ of $\TT$, by \cite[Lemma 2.1.4]{mr02}.
  Let $H^1(\QQ_{\Sigma}/\QQ,\TT)_{\textup{tor}}$ denote the $R$-torsion submodule of $H^1(\QQ_{\Sigma}/\QQ,\TT)$. We also let $F$ denote the field of fractions of $R$ and set $\mathbb{V}=\TT\otimes F$, $\mathbb{W}=\mathbb{V}/\TT$. Observe that 
  $$H^1(\QQ_{\Sigma}/\QQ,\TT)_{\textup{tor}}=\ker\left(H^1(\QQ_{\Sigma}/\QQ,\TT)\lra H^1(\QQ_{\Sigma}/\QQ,\mathbb{V})\right)\cong H^0(\QQ,\mathbb{W}),$$
  and thus we are reduced to verify the vanishing of $H^0(\QQ,\mathbb{W})$. Suppose 
  $$0\neq t\otimes 1/g \in H^0(\QQ,\mathbb{W})$$ 
  (with $t\in \TT$ and $g\in R$). 
  We thus obtain a non-trivial element $\overline{T} \in H^0(\QQ,\TT/g\TT)$ and contradict \eqref{mrgeneralvanishing}. This argument applies verbatim for the regular ring $\frak{R}$ and the representation $\frak{T}$.
  
  Under the hypothesis (\textbf{H3}), the following conditions on an element $c \in H^1(\QQ_{\Sigma}/\QQ,Y)$ are therefore equivalent for $Y=\TT$ or $\frak{T}$ (and correspondingly, $X=R$ or $\frak{R}$):
  
    \begin{enumerate}
  \item $c\neq 0$.
  \item $c$ is not $X$-torsion.
  \item $c \notin \frak{p}H^1(\QQ_{\Sigma}/\QQ,Y)$ for infinitely many  height one primes $\frak{p} \subset X$.
  \item $c \notin \frak{p}H^1(\QQ_{\Sigma}/\QQ,Y)$ for some  height one prime $\frak{p} \subset X$.
  \end{enumerate}
  The only non-obvious step is to verify the implication $(2)\implies(3)$ and this may verified out following the proof of \cite[Lemma 2.1.7]{how2}. 
    \end{rem}
  
 Before we give the proofs of Theorems \ref{thm:weakleo1} and \ref{thm:weakleo2}, we prove two general preparatory lemmas from commutative algebra. Let $\mathcal{H}_0$ be a local integral domain which is a complete, regular $\oo$-algebra that has relative dimension $one$ over $\oo$. Set  $\mathcal{H}=\mathcal{H}_0[[X]]$ and suppose $\mathbf{T}$ is a finite $\mathcal{H}$-module endowed with a continuous action of $\textup{Gal}(\QQ_\Sigma/\QQ)$. 
 
 In applications, $\mathcal{H}$ will either be a certain quotient of $R=\oo[[X_1,X_2,X_3]]$ (and $\mathbf{T}=\TT\otimes_R \mathcal{H}$) or else $\mathcal{H}_0$ will be the ring $\RR$. In case of the latter, we will set $\mathcal{H}=\frak{R}:=\RR[[\Gamma]]$ and $\mathbf{T}$ will be the representation $\frak{T}=\mathcal{T}\otimes\LL$ that was introduced in the previous section. Note that in either case, $\mathbf{T}$ satisfies the hypotheses (\textbf{H1})-(\textbf{H4}), (\textbf{H.Tam}) and (\textbf{H.nA}). Suppose further until the end that $\chi(\overline{T})=1$.


 \begin{lemma}
 \label{lem:finiteimpliestorsion}
 Suppose $M$ is an $\mathcal{H}$-module. Assume for a height one prime $\wp$ of $\mathcal{H}_0$ and an integer $N$, the quotient $M/(\wp,X+p^N)M$ is of finite order. Then $M$ is a finitely generated $\mathcal{H}$-torsion module.
 \end{lemma}
 \begin{proof}
We first give a proof assuming that $p \notin \wp$. To ease notation, write $X_N=X+p^N$. One can find an integer $s$ so that
 \be\label{eqn:killM}p^s\cdot\left(M/(\wp,X_N)M\right)=0.\ee
 By Nakayama's Lemma $M$ is finitely generated as an $\mathcal{H}$-module, say by $m_1,\dots, m_r \in M$. It follows from (\ref{eqn:killM}) that
 $$p^sm_i=\sum_{j=1}^r a_{j}^{(i)}m_j,$$
 where $a_{j}^{(i)} \in (\wp,X_N)$. Setting $A=[a_j^{(i)}]$ and $B=A-p^s\cdot I_{r\times r}$, we conclude by (\ref{eqn:killM}) that
 \begin{align*}
 &\sum_{j=1}^{i-1} a_{j}^{(i)}m_j + (a_i^{(i)}-p^s)m_i+ \sum_{j=i+1}^{r} a_{j}^{(i)}m_j=0\\
&\implies (A-p^s\cdot I_{r\times r})\left[ \begin{array}{c}m_1\\
\vdots\\
m_r
\end{array}\right]=B\cdot\left[ \begin{array}{c}m_1\\
\vdots\\
m_r
\end{array}\right]=0\\
&\implies \textup{adj}(B)\cdot B\left[ \begin{array}{c}m_1\\
\vdots\\
m_r
\end{array}\right]=0\implies \det(B)\cdot M=0.
 \end{align*}
To conclude with the proof of the lemma, we check that the element $\det(B) \in \mathcal{H}$ is non-zero. Observe that
\begin{align*}\det(B)=\det(A-p^s\cdot I_{r\times r})&\equiv (-1)^{r}p^{sr} \mod (\wp,X_N)\\
&\not\equiv 0 \mod (\wp,X_N),
\end{align*}
as the ring $\mathcal{H}/(\wp,X_N)\cong \mathcal{H}_0/\wp$ is an integral domain of characteristic zero, as we have assumed $p\notin\wp$.

In case $p\in\wp$, note that the given condition on $M$ translates into  the statement that the module $M/(\wp,X)M$ has finite cardinality. If $M$ were not a torsion module, $M/(\wp,X)M$ would contain a submodule isomorphic to $\mathcal{H}_0/\wp$. The latter ring, however, has infinite cardinality as it at least contains a ring isomorphic to a formal power series ring in one variable over $\mathbb{F}_p$. 
  \end{proof}
\begin{lemma}
 \label{lem:torsionimpliestorsion}
 If the $R$-module $H^1_{\FFc^*}(\QQ,{\TT}^*)^{\vee}$ is torsion, then for all but finitely many height one primes $\wp\subset R$, the $R/\wp$-module $H^1_{\FFc^*}(\QQ,{\TT}^*)^{\vee}/\wp H^1_{\FFc^*}(\QQ,{\TT}^*)^{\vee}$ is torsion.
 \end{lemma}
 \begin{proof}
  Let $M$ be any finitely generated torsion $R$-module, with generators $m_1,\cdots,m_r$. Since $M$ is torsion, it follows that for all but finitely many height one primes $\wp$ of $R$, we have $M_\wp=0$. This in particular means for every $1\leq i\leq r$, there is $s_i\in R-\wp$ such that $s_im_i=0$. Set $s=s_1\cdots s_r$ and let $\bar{s}\in R/\wp$ denote the homomorphic image of $s$. Note that $\bar{s}\neq 0$ and that $\bar{s}\cdot M/\wp M=0$. This shows that the $R/\wp$-module $M/\wp M$ is torsion for almost all height one primes $\wp$. This is exactly the assertion of the Lemma with $M=H^1_{\FFc^*}(\QQ,{\TT}^*)^{\vee}$.
  
  \end{proof}
  \begin{proof}[Proof of Theorem~\ref{thm:weakleo1}]

We first give a proof for the statement concerning the ring $\frak{R}$ and later use this result to deduce the statement for the ring $R$. 

For any ideal $I$ of $\frak{R}$ and any subquotient $M$ of $\frak{T}$, we have by \cite[3.5.2]{mr02} and the proof of \cite[Lemma 3.5.3]{mr02} (both of which apply thanks to our running hypothesis (\textbf{H3})) that
  $$H^1(\QQ_{\Sigma(\FFc)}/\QQ,{M}^*[I]) \stackrel{\sim}{\lra} H^1(\QQ_{\Sigma(\FFc)}/\QQ,{M}^*)[I]$$
  and hence also an injection
  $$H^1_{\FFc^*}(\QQ,{M}^*[I]) \hookrightarrow H^1_{\FFc^*}(\QQ,{M}^*)[I],$$
where $\FFc^*$ on $M^*$ is induced from the Selmer structure $\FFc$ on $\frak{T}^*$ by propagation. Passing to Pontryagin duals, we thus obtain a surjection
\be\label{eqn:surjpontr}
H^1_{\FFc^*}(\QQ,{M}^*)^\vee\otimes \frak{R}/I \twoheadrightarrow H^1_{\FFc^*}(\QQ,{M}^*[I])^\vee.
\ee
By the assumption that $H^1_{\FFc^*}(\QQ,\frak{T}^*)^{\vee}$ is $\frak{R}$-torsion, one may choose  by \cite[Theorem 6.5]{matsumura} a specialization 
$$\frak{s}_\wp: \RR \lra S_\wp$$
into the ring of integers $S_\wp$ of a finite extension $\Phi_\wp$ of $\QQ_p$, whose kernel $\wp$ is a height one prime $\wp\subset \RR$ and satisfies with the following properties:
\begin{itemize}
\item $S_\wp$ is integral closure of the integral domain $\oo_\wp:=\RR/\wp$ in $\textup{Frac}(\oo_\wp)=\Phi_\wp$,
\item $\wp \notin \textup{Supp}_{\frak{R}}(H^1_{\FFc^*}(\QQ,\frak{T}^*)^{\vee})$, where $\wp$ here denotes by slight abuse the height one prime which is the kernel of the induced map
$$\frak{R}=\RR[[\Gamma]]\stackrel{\frak{s}_\wp}{\lra} S_\wp[[\Gamma]].$$ 
\end{itemize}
We denote the induced ring homomorphism
$\oo_\wp \hookrightarrow S_\wp$ also by $\frak{s}_\wp$.

For $\wp$ chosen as above, it follows that the module
$H^1_{\FFc^*}(\QQ,\frak{T}^*)^{\vee}/\wp$ is $\oo_\wp[[\Gamma]]$-torsion. By (\ref{eqn:surjpontr}) this implies that the module
$$H^1_{\FFc^*}(\QQ,\frak{T}^*[\wp])^\vee \cong H^1_{\FFc^*}(\QQ,(\mathcal{T}/\wp\mathcal{T}\otimes\LL)^*)^\vee$$
is $\oo_p[[\Gamma]]$-torsion as well. It is therefore possible (using Hensel's Lemma and \cite[Theorem 6.5]{matsumura}) to choose an $N>>0$ such that
\begin{itemize}
\item $\oo_\wp[[\Gamma]]/(\gamma-1+p^N) \cong \oo_\wp$,
\item  $\gamma-1+p^N \notin \textup{Supp}_{\oo_\wp[[\Gamma]]}\left(H^1_{\FFc^*}(\QQ,(\mathcal{T}/\wp\mathcal{T}\otimes\LL)^*)^\vee\right)$.
\end{itemize}
For $N$ chosen as above, we therefore have that the module
$$H^1_{\FFc^*}(\QQ,(\mathcal{T}/\wp\mathcal{T}\otimes\LL)^*)^\vee/(\gamma-1+p^N)$$
is $\oo_{\wp}$-torsion. Setting $T(\wp,N):=\mathcal{T}/\wp\mathcal{T}\otimes\LL/(\gamma-1+p^N)$ and applying (\ref{eqn:surjpontr}) again, we conclude that the module
$$H^1_{\FFc^*}(\QQ,(\mathcal{T}/\wp\mathcal{T}\otimes\LL/(\gamma-1+p^N))^*)^\vee\cong H^1_{\FFc^*}(\QQ,T(\wp,N)^*)^\vee$$
is $\oo_{\wp}$-torsion, hence finite. 

When we do not vary $N$, we write $T(\wp)$ in place of $T(\wp,N)$ to ease notation. Let $T=T(\wp)\otimes_{\frak{s}_\wp}S_\wp$ and define the Selmer structure $\FF_T$ by setting
\begin{itemize}
\item $\Sigma(\FF_T)=\Sigma(\FFc)=:\Sigma$,
\item $H^1_{\FF_T}(\QQ_p,T)=H^1(\QQ_p,T)$,
\item $H^1_{\FF_T}(\QQ_\ell,T)=\ker\left(H^1(\QQ_\ell,T)\lra H^1(I_\ell,T\otimes_{\ZZ_p}\QQ_p)\right)$, for $\ell\neq p$.
\end{itemize}
Note that $\FF_T$ is exactly what Mazur and Rubin call the \emph{canonical Selmer structure} on $T$. Let $\iota$ denote the injection $T(\wp) \hookrightarrow T$. Then $\iota$ induces maps
$$H^1(\QQ_{\Sigma}/\QQ,T^*) \lra H^1(\QQ_{\Sigma}/\QQ,T(\wp)^*)$$
$$H^1(\QQ_\ell,T(\wp)) \lra H^1(\QQ_\ell,T)$$
$$H^1(\QQ_\ell,T^*) \lra H^1(\QQ_\ell,T(\wp)^*)$$
for every prime $\ell$.
It is easy to see that the image of $H^1_{\FFc}(\QQ_\ell,T(\wp))$ lands in $H^1_{\FF_T}(\QQ_\ell,T)$ for every $\ell$ (and by local duality, the image of $H^1_{\FF_T^*}(\QQ_\ell,T^*)$ therefore lands in $H^1_{\FFc^*}(\QQ_\ell,T(\wp)^*)$). We thence obtain a map
\be\label{eqn:wantfinitekernelcokernel} H^1_{\FF_T^*}(\QQ,T^*)\lra H^1_{\FFc^*}(\QQ,T(\wp,N)^*).\ee
In Lemma~\ref{lemma:reallywanted} below we check that the kernel and the cokernel of this map is finite for all sufficiently large $N$. This shows that $H^1_{\FF_T^*}(\QQ,T^*)$ is of finite order for $N>>0$, as we have already verified above that $H^1_{\FFc^*}(\QQ,T(\wp,N)^*)$ is finite.

Let $\pmb{\kappa} \in \overline{\mathbf{KS}}(\frak{T},\FFc,\PP)$ be a generator so that its image $\bar{\pmb{\kappa}}\in {\mathbf{KS}}(\overline{T},\FFc,\PP)$ is non-zero by Theorem~\ref{thm:KSmain}. Hence, the image $\pmb{\kappa}^{(\wp)}$ of $\pmb{\kappa}$ in $\overline{\mathbf{KS}}({T},\FF_{T},\PP)$ is non-zero as well. Corollary 5.2.13 of \cite{mr02} applies thanks to our running hypotheses and it follows that the $\kappa^{(\wp)}_1\neq0$ and hence $\kappa_1\neq 0$. 

As for the assertion for $\TT$, we first use Lemma~\ref{lem:torsionimpliestorsion} to find a height one prime of the form $\wp=(X_3+p^M)R$ that verifies the conclusion of Lemma~\ref{lem:torsionimpliestorsion}. For the chosen height one prime $\wp$ set $\mathcal{H}=R/\wp\cong \oo[[X_1,X_2]]$ and $\mathbf{T}=\TT\otimes_{R}\mathcal{H}$. Observe (using \eqref{eqn:surjpontr}) that 
\be\label{eqn:reducedimensionbyone}H^1_{\FFc^*}(\QQ,\mathbf{T}^*)\cong H^1_{\FFc^*}(\QQ,\TT^*)[\wp]\ee
and upon taking Pontryagin duals
$$H^1_{\FFc^*}(\QQ,\mathbf{T}^*)^\vee\cong H^1_{\FFc^*}(\QQ,\TT^*)^\vee/\wp H^1_{\FFc^*}(\QQ,\TT^*)^\vee.$$
Let $\pmb{\kappa} \in \overline{\mathbf{KS}}({\TT},\FFc,\PP)$ be any generator and let 
$$\varphi: \overline{\mathbf{KS}}({\TT},\FFc,\PP) \lra \overline{\mathbf{KS}}(\mathbf{T},\FFc,\PP)$$
denote the map induced from $R\ra \mathcal{H}$. The commutativity of the diagram
\be\label{diagram:primitive}\xymatrix{\overline{\textbf{KS}}(\TT,\FFc,\PP)\ar[rr]^{\varphi}\ar@{->>}[dr]&&\overline{\textbf{KS}}(\mathbf{T},\FFc,\PP)\ar[dl]\\
&{\textbf{KS}}(\overline{T},\FFc,\PP)&
} \ee
shows that $\varphi(\pmb{\kappa})$ generates the free $\mathcal{H}$-module $\overline{\mathbf{KS}}(\mathbf{T},\FFc,\PP)$ of rank one. Furthermore, by our choice of the prime ideal $\wp$ and (\ref{eqn:reducedimensionbyone}), the $\mathcal{H}$-module $H^1_{\FFc^*}(\QQ,\mathbf{T}^*)^\vee$ is torsion. Using the proof above for the ring $\mathcal{H}$ in place of $\frak{R}$, we conclude that 
$$\varphi({\kappa}_1)=\varphi(\pmb{\kappa})_1\neq 0,$$ 
in particular that $\kappa_1\neq 0$.

This completes the proof of Theorem~\ref{thm:weakleo1}\,, modulo Lemma~\ref{lemma:reallywanted} below.
  \end{proof}




  \begin{lemma}
  \label{lemma:reallywanted}
Let $T$ and $T(\wp,N)$ be as in the proof of Theorem~\ref{thm:weakleo1}. When the positive integer $N$ is sufficiently large, both the kernel and the cokernel of the map
$$ H^1_{\FF_T^*}(\QQ,T^*)\lra H^1_{\FFc^*}(\QQ,T(\wp,N)^*)$$
are finite.
  \end{lemma}

  \begin{proof}
  We first verify that the kernels and the cokernels  of the maps
  \be\label{eqn:finitewant1} H^1(\QQ_{\Sigma}/\QQ,T^*) \lra H^1(\QQ_{\Sigma}/\QQ,T(\wp,N)^*)
\ee
\be\label{eqn:compatecanTtocan}H^1_{\FF_T^*}(\QQ_\ell,T^*)\lra H^1_{\FFc^*}(\QQ_\ell,T(\wp,N)^*)\ee
have finite order for every prime $\ell$. When $N$ is fixed, we will denote $T(\wp,N)$ simply by $T(\wp)$.

Observe that the kernel of the map (\ref{eqn:finitewant1}) lives in $H^0(\QQ_{\Sigma}/\QQ,(T/T(\wp))^*)$ and its cokernel in $H^1(\QQ_{\Sigma}/\QQ,(T/T(\wp))^*)$, which are both finite.

As for the map (\ref{eqn:compatecanTtocan}) when $\ell=p$, our running hypothesis (\textbf{H.nA}) along with the fact that the ideal $\wp$ is principal\footnote{This is the only point in the proof of Theorems~\ref{thm:weakleo1} and \ref{thm:weakleo2} where we use that the ring $\RR$ is regular in an essential way.} (being a height-one prime of the regular ring $\RR$) show that
$$H^1_{\FFc}(\QQ_p,T(\wp)):=\textup{im}\left(H^1(\QQ_p,\frak{T})\lra H^1(\QQ_p,T(\wp))\right)=H^1(\QQ_p,T(\wp)),$$
hence we have
$$ H^1_{\FF_T^*}(\QQ_\ell,T^*)=0=H^1_{\FFc^*}(\QQ_\ell,T(\wp)^*)\,;$$
so the kernel and cokernel of (\ref{eqn:compatecanTtocan}) are trivial. It remains to control the kernel and the cokernel of (\ref{eqn:compatecanTtocan}) when $\ell\neq p$. The kernel of
\be\label{eqn:finitewewantdual}
H^1_{\FFc}(\QQ_\ell,T(\wp))\lra H^1_{\FF_T}(\QQ_\ell,T)
\ee
 is controlled by
$$\ker\left(H^1(\QQ_\ell,T(\wp))\lra H^1(\QQ_\ell,T)\right)= \textup{im}\left(H^0(\QQ_\ell,T/T(\wp))\lra H^1(\QQ_\ell,T(\wp))\right)$$
which is visibly  finite.

We finally prove that the cokernel of (\ref{eqn:finitewewantdual}) is finite. Consider now the commutative diagram
$$\xymatrix{0 \ar[r]&H^1_{\textup{ur}}(\QQ_\ell,T(\wp))\ar[r]\ar[d]&H^1(\QQ,T(\wp))\ar[r]\ar[d]&H^1(I_\ell,T(\wp))^{\textup{Fr}_\ell=1}\ar[r]\ar[d]&0\\
0 \ar[r]&H^1_{\textup{ur}}(\QQ_\ell,T)\ar[r]&H^1(\QQ,T)\ar[r]&H^1(I_\ell,T)^{\textup{Fr}_\ell=1}\ar[r]&0
}$$
The cokernel of the vertical map in the middle is controlled by $H^2(\QQ_\ell,T/T(\wp))$ hence it is finite. Also, the kernel of the the rightmost is finite for a similar reason. This shows by snake lemma that the cokernel of the leftmost vertical map is also finite. As the index of $H^1_{\textup{ur}}(\QQ_\ell,T)$ in $H^1_f(\QQ_\ell,T)=H^1_{\FF_T}(\QQ_\ell,T)$ is finite as well, we therefore proved that
\be\label{eqn:wantstep1}\hbox{ the cokernel of the map }
H^1_{\textup{ur}}(\QQ_\ell,T(\wp))\lra H^1_{\FF_T}(\QQ_\ell,T)
\hbox{ is finite.}\ee
 Furthermore, it is not hard to see that the $\LL$-module
$$H^1_{\textup{ur}}(\QQ_\ell,\mathcal{T}/\wp\mathcal{T}\otimes\LL)=H^1(G_{\QQ_\ell}/I_\ell,(\mathcal{T}/\wp\mathcal{T})^{I_\ell}\otimes\LL)$$
 is $\LL$-torsion. (Note in the equality above we use the fact that $I_\ell$ acts trivially on $\LL$.) 
Choosing the positive integer $N>>0$ above so that $\gamma-1+p^N$ does not divide the characteristic ideal of this module, we obtain a finite quotient
$H^1(G_{\QQ_\ell}/I_\ell,(\mathcal{T}/\wp\mathcal{T})^{I_\ell}\otimes\LL)/(\gamma-1+p^N).$
Since the cohomological dimension of $G_{\QQ_\ell}/I_\ell$ is one, we have
\begin{align*} H^1(G_{\QQ_\ell}/I_\ell,(\mathcal{T}/\wp\mathcal{T})^{I_\ell}\otimes\LL)/(\gamma-1+p^N)&\stackrel{\sim}{\lra} H^1(G_{\QQ_\ell}/I_\ell,(\mathcal{T}/\wp\mathcal{T})^{I_\ell}\otimes\LL/(\gamma-1+p^N))\\&\cong H^1(G_{\QQ_\ell}/I_\ell,(\mathcal{T}/\wp\mathcal{T}\otimes\LL/(\gamma-1+p^N))^{I_\ell})\\
&\cong  H^1(G_{\QQ_\ell}/I_\ell,T(\wp,N)^{I_\ell})=H^1_{\textup{ur}}(\QQ_\ell,T(\wp,N)),
\end{align*}
where $T(\wp,N)=\mathcal{T}/\wp\mathcal{T}\otimes\LL/(\gamma-1+p^N)$ as above. In particular,
the index of $H^1_{\FFc}(\QQ,T(\wp,N))$ in $H^1_{\textup{ur}}(\QQ_\ell,T(\wp,N))$ is finite for $N>>0$. This, together with (\ref{eqn:wantstep1}) shows that the kernel and cokernel of the  map (\ref{eqn:finitewewantdual}), and by local duality, also the kernel and the cokernel of the map (\ref{eqn:compatecanTtocan}) are finite for $N>>0$. 

Using the fact that the kernels and cokernels of the maps (\ref{eqn:finitewant1}) and (\ref{eqn:compatecanTtocan}) are both finite the proof of the lemma follows at once.
  \end{proof}

  \begin{proof}[Proof of Theorem~\ref{thm:weakleo2}]
  As the proof of this Theorem in fact follows from a more general statement due to Ochiai~\cite{ochiaideform} (see the proof Theorem 2.4 and Remark 2.5 of loc.cit.), we only give a sketch of the proof and only in the situation concerning the ring $\frak{R}$ and the representation $\frak{T}$. We use the notation from the proof of Theorem~\ref{thm:weakleo1}.

  Let $\pmb{\kappa}\in  \overline{\mathbf{KS}}(\frak{T},\FFc,\PP)$ be a given generator. Since $\kappa_1$ is non-torsion, it follows that there is a height one prime $\wp$ of $\RR$ as in the proof of Theorem~\ref{thm:weakleo1} and a positive integer $N$ (chosen in way that the conclusion of Lemma~\ref{lemma:reallywanted} holds true) such that the image
  $$\textup{red}_{\wp,N}(\kappa_1) \in H^1_{\FFc}(\QQ,T(\wp,N))$$
  of $\kappa_1$ is non-zero. Fix such $\wp$ and $N$; define $T$ (and the Selmer structure $\FF_T$) as in the proof of Theorem~\ref{thm:weakleo1}. We let $\pmb{\kappa}^{(\wp)} \in \overline{\mathbf{KS}}(T,\FF_T,\PP)$ be the image of $\pmb{\kappa}$. By (\textbf{H3}), the map
  $$H^1(\QQ_{\Sigma}/\QQ,T(\wp,N))\lra H^1(\QQ_{\Sigma}/\QQ,T)$$
is injective. In particular, the image $\kappa_1^{(\wp)}$ of $\textup{red}_{\wp,N}(\kappa_1)$ inside $H^1_{\FF_T}(\QQ,T)$ is non-zero. Let $\pmb{\kappa}^{(\wp)} \in \overline{\mathbf{KS}}(T,\FF_T,\PP)$ be the image of $\pmb{\kappa}$. We therefore showed the existence of  a Kolyvagin system $\pmb{\kappa}^{(\wp)} \in \overline{\mathbf{KS}}(T,\FF_T,\PP)$ whose leading term verifies that $\kappa_1^{(\wp)}\neq0$. This shows that $H^1_{\FF_T^*}(\QQ,T^*)$ is finite.

By Lemma~\ref{lemma:reallywanted}, we have a map
$$H^1_{\FF_T^*}(\QQ,T^*) \lra H^1_{\FFc^*}(\QQ,T(\wp,N)^*)$$
with finite kernel and cokernel. Hence  $H^1_{\FFc^*}(\QQ,T(\wp)^*)$ is finite as well. We conclude by (\ref{eqn:surjpontr}) that
$$H^1_{\FFc^*}(\QQ,\frak{T}^*)^{\vee}/(\wp,\gamma-1+p^N) \cong H^1_{\FF_T^*}(\QQ,T^*)^{\vee}$$
is also finite. It follows from Lemma~\ref{lem:finiteimpliestorsion} that $H^1_{\FFc^*}(\QQ,\frak{T}^*)^{\vee}$ is $\frak{R}$-torsion, as desired.
  \end{proof}

  \begin{thm}
  \label{thm:h1free}
Suppose $H^1_{\FFc^*}(\QQ,\frak{T}^*)^{\vee}$ is $\frak{R}$-torsion. Under the running hypotheses of this section, the $\frak{R}$-module $H^1_{\FFc}(\QQ,\frak{T})$ is free of rank one.

 Similar statement holds true when $\frak{R}$ is replaced by $R$ and $\frak{T}$ by $\TT$.
  \end{thm}
  \begin{proof}
  To simplify the arguments we suppose in addition that the ring $\RR$ is the power series ring $\oo[[X]]$; the general case when $\RR$ is a general regular $\oo$-algebra of dimension two may be treated after minor alterations. As above, choose a positive integer $N>>0$ so that
  \begin{itemize}
  \item $\oo[[X]]/(X+p^N)\cong \oo$,
  \item $H^1_{\FFc^*}(\QQ,\frak{T}^*)^{\vee}/(X+p^N)$ is $\LL$-torsion.
  \end{itemize}
  By setting $T:=\mathcal{T}/(X+p^N)\mathcal{T}$, we conclude using (\ref{eqn:surjpontr}) that
 the module $H^1(\QQ, (T\otimes\LL)^*)^{\vee}$ is $\LL$-torsion. Similarly, choose a positive integer $M>>0$ such that
 $$H^1_{\FFc^*}(\QQ, (T\otimes\LL)^*)^{\vee}/(\gamma-1+p^M)\cong H^1_{\FFc^*}(\QQ,\dot{T})^{\vee}$$
is finite. Here, $\dot{T}$ is the free $\oo$-module $T\otimes\LL/(\gamma-1+p^M)$. By \cite[Corollary 5.2.6]{mr02}, it follows that $\textup{rank}_\oo(H^1_{\FFc}(\QQ,\dot{T}))=\chi(\overline{T})=1$. Furthermore, the $\oo$-module $H^1_{\FFc}(\QQ,\dot{T})$ is torsion-free as since we assumed (\textbf{H3}), hence we conclude that $H^1_{\FFc}(\QQ,\dot{T})$ is a free $\oo$-module of rank one.

Set $X_1=X+p^N$ and $X_2=\gamma-1+p^M$ for $M,N$ as above and define $\frak{R}_{u,v}=\frak{R}/(X_1^u,X_2^v)$,  $\frak{R}_{r,u,v}=\frak{R}/(\varpi^r, X_1^u,X_2^v)$, $\frak{T}_{u,v}=\frak{T}\otimes_{\frak{R}}\frak{R}_{u,v}$ and $\frak{T}_{r,u,v}=\frak{T}\otimes_{\frak{R}}\frak{R}_{r,u,v}$. Note that $\frak{T}_{1,1}=\dot{T}$. As $H^1_{\FFc}(\QQ,\frak{T}_{u,v})=\varprojlim_{r} H^1_{\FFc}(\QQ,\frak{T}_{r,u,v})$, it follows by the proof of Proposition~\ref{prop:upperbound} that
$$H^1_{\FFc}(\QQ,\frak{T}_{1,1})\stackrel{\sim}{\lra} H^1_{\FFc}(\QQ,\frak{T}_{u,v})[X_1^{u-1},X_2^{v-1}].$$
This shows that the module
\begin{align*}\textup{Hom}\,_{\frak{R}_{u,v}}(H^1_{\FFc}(\QQ,\frak{T}_{u,v}),{\frak{R}_{u,v}})/(X_1^{u-1},X_2^{v-1})&\cong \textup{Hom}\,_{\frak{R}_{u,v}}\left(H^1_{\FFc}(\QQ,\frak{T}_{u,v})[X_1^{u-1},X_2^{v-1}],{\frak{R}_{u,v}}\right)\\
&\cong \textup{Hom}_{\oo}\left(H^1_{\FFc}(\QQ,\frak{T}_{1,1}),\oo\right),
\end{align*} 
is cyclic, hence by Nakayama's Lemma (along with the fact that the $\oo$-module $H^1_{\FFc}(\QQ,\frak{T}_{1,1})=H^1_{\FFc}(\QQ,\dot{T})$ is free of rank one) we conclude that the module $\textup{Hom}_{\frak{R}_{u,v}}\left(H^1_{\FFc}(\QQ,\frak{T}_{u,v}),{\frak{R}_{u,v}}\right)$ is cyclic as well.

On the other hand, (\textbf{H3}) shows that the module $H^1_{\FFc}(\QQ,\frak{T}_{u,v})$ is $\oo$-torsion free and the proof of Proposition~\ref{prop:lowerbound} shows $\textup{rank}_{\oo}(H^1_{\FFc}(\QQ,\frak{T}_{u,v}))\geq uv$. This shows that the cyclic $\frak{R}_{u,v}$-module $\textup{Hom}_{\frak{R}_{u,v}}\left(H^1_{\FFc}(\QQ,\frak{T}_{u,v}),{\frak{R}_{u,v}}\right)$ is indeed free of rank one, hence the module $H^1_{\FFc}(\QQ,\frak{T}_{u,v})$ itself is free of rank one as an $\frak{R}_{u,v}$-module. Passing to limit we conclude with the proof of the Theorem when the coefficient ring is $\frak{R}$. In the situation when the coefficient ring is $R$, one easily reduces to the case discussed above using Lemma~\ref{lem:torsionimpliestorsion}.
  \end{proof}
 \begin{lemma}
 \label{lem:maindescentlemma}
 Suppose  $S$ is a regular ring and $\pi_t: S[[t]]\ra S$ the natural ring homomorphism induced by $t\mapsto 0$. $h \in S[[t]]$ and $M,N$ are torsion $S[[t]]$-modules,  $M^\prime,N^\prime$ are torsion $S$-modules such that 
 \begin{itemize}
 \item[(i)] $\textup{char}_{S[[t]]}(M)=\textup{char}_{S[[t]]}(N)\cdot h$\,,
 \item[(ii)] $M[t]$ is pseudo-null\,,
 \item[(iii)] $\textup{char}_S(M^\prime/\pi_t(M))=\textup{char}_S(N[t])$\,.
 \end{itemize}
 where $X[t]$ stands as usual for the submodule of an $S[[t]]$-module $X$ annihilated by $tS[[t]]$. Then
$$\textup{char}_S(M^\prime)=\textup{char}_S(N^\prime)\cdot\pi_t(h)\,.$$ 
 \end{lemma}
 \begin{proof}
 This follows from \cite[Proposition 1.1]{bandiniNY}.
 \end{proof}

Recall that $\pmb{\kappa} \in \overline{\mathbf{KS}}({\TT},\FFc,\PP)$ is a fixed generator and $\kappa_1 \in H^1_{\FFc}(\QQ,\TT)$ is its leading term.

 \begin{lemma}
 \label{lemma:preparationtosharpen}
 Suppose that $\kappa_1 \neq 0$. Then there exists positive integers $\alpha_1, \alpha_2,\alpha_3$ such that 
\begin{itemize}
\item $R/(X_1+p^{\alpha_1}, X_2+p^{\alpha_2}, X_3+p^{\alpha_3})\cong \oo$,
\item $H^1_{\FFc^*}(\QQ,\dot{T}^*)$ is finite, where $\dot{T}:=\TT/(X_1+p^{\alpha_1}, X_2+p^{\alpha_2}, X_3+p^{\alpha_3})$\,.
\end{itemize} 
 \end{lemma}
 \begin{proof}
 the $R$-module $H^1_{\FFc^*}(\QQ,\TT^*)^{\vee}$ is torsion by Theorem~\ref{thm:weakleo2} and in this case, one may find a triple $(\alpha_1,\alpha_2,\alpha_3)$ by proceeding as in the proof of Theorem~\ref{thm:h1free}, by iteratively using (\ref{eqn:surjpontr}).
 \end{proof}
 \begin{define}
 \label{def:goodquotients}
In the situation of Lemma~\ref{lemma:preparationtosharpen}, we set $Y_i=X_i+p^{\alpha_i}$. We also define the quotient rings $S_0=R/(Y_1,Y_2,Y_3)$, $S_1=R/(Y_2,Y_3)$, $S_2=R/Y_3$ and $S_3=R$ with natural surjections $\pi_i: R\twoheadrightarrow S_i$. Note that each $S_i$ is a power series ring over $S_{i-1}$ in one variable (which may be naturally identified with $Y_i$). Set $T_i=\TT\otimes_{R}S_i$\,. Observe that $T_3=\TT$ and $T_0=\dot{T}$.
 \end{define}
 \begin{define}
 \label{def:selmercomplexforTi}
 In the setting of Lemma~\ref{lemma:preparationtosharpen} and with the notation of Definition~\ref{def:goodquotients}, we define Nekov\'a\v{r}'s Selmer complex $C^{\bullet}_f(\QQ,X)$ for $X=T_i$ or $T_i^*$ (i=1, 2, 3) as the following complex of (co-)finite type $S_i$-modules:
$$C^{\bullet}_f(\QQ,X)=\textup{Cone}\left(C^{\bullet}_{\textup{cont}}(\QQ_\Sigma/\QQ,X)\oplus \bigoplus_{\ell\in\Sigma}C^{\bullet}_f(\QQ_\ell,X) \lra \bigoplus_{\ell\in\Sigma} C^{\bullet}_f(\QQ_\ell,X)\right)[-1]$$
where
 $$C^{\bullet}_f(\QQ,X)=\left\{\begin{array}{lc}C^{\bullet}_{\textup{cont}}(\QQ_p,U_p^+(X)) &\hbox{ if }\ell=p,\\\\
 C^{\bullet}_{\textup{cont}}(G_{\QQ_\ell}/I_\ell,X^{I_\ell}) & \hbox{ if } \ell\neq p, 
 \end{array}\right.$$
and  $U_p^+(T_i)=T_i$\,, $U_p^+(T_i^*)=0$.
Let $R\Gamma_f(\QQ,X)$ denote the corresponding object in the derived category and let $\widetilde{H}^j_f(\QQ,X)$ be the $i$th cohomology of $R\Gamma_f(\QQ,X)$ in degree $j$. 
 \end{define}
\begin{prop}
\label{prop:propertiesofselmercomplex}For $i=1,2,3: $
\begin{enumerate}\item The complex $R\Gamma_f(\QQ,T_i)$ may be represented by a perfect complex $($in the sense of \cite[Exp. I, Cor. 5.8.1]{sga6}$)$ of $S_i$-modules concentrated in degrees $1$ and $2$. 
\item There is a natural isomorphism $H^1_{\FFc^*}(\QQ,T_i^*)^\vee \cong \widetilde{H}^2_f(\QQ,T_i)\,.$ 
\item The following sequence is exact:
$$H^1_{\FFc}(\QQ,T_i)/Y_i\cdot H^1_{\FFc}(\QQ,T_i)\lra H^1_{\FFc}(\QQ,T_{i-1})\lra H^1_{\FFc^*}(\QQ,T_i^*)^\vee[Y_i]\lra 0$$
\end{enumerate}
\end{prop}
\begin{proof}
(1) follows from the fact our ring $S_i$ is regular, $T_i$ is a free $S_i$-module  and using a result of Serre and Auslander-Buchsbaum. Due to $p$-cohomological dimension considerations, it is easy to see that the cohomology of the complex $R\Gamma_f(\QQ,\TT)$ is concentrated in degrees $[0,3]$. By (\textbf{H3}) the cohomology $\widetilde{H}^0_f(\QQ,T_i)$ in degree zero vanishes. By Matlis duality we have $\widetilde{H}^3_f(\QQ,T_i)\cong\tilde{H}^0_{f}(\QQ,T_i^*)^\vee$ and the cohomology of $R\Gamma_f(\QQ,\TT)$ in degree $3$ vanishes thanks to (\textbf{H3}) as well. 

It follows from \cite[Lemma 9.6.3]{nek} and our running hypothesis (\textbf{H.nA}) that $\widetilde{H}^1_f(\QQ,T_i^*)\cong H^1_{\FFc^*}(\QQ,T_i^*)$. The isomorphism in (2) is then induced by Matlis duality (c.f., \cite[8.9.6.1]{nek}). We remark that since the residue field of $R$ is finite, Matlis duality functor coincides with the Pontryagin duality functor.

(3) follows from Nekov\'a\v{r}'s control theorem \cite[Proposition 8.10.1]{nek}, using the identification in (2) and the isomorphism $\widetilde{H}^1_f(\QQ,T_i)\cong H^1_{\FFc}(\QQ,T_i)$ (which follows from the exact sequence of \cite[Lemma 9.6.3]{nek}).
\end{proof}
\begin{proof}[Proof of Theorem~\ref{thm:sharpening}]
Under the assumptions of the current section, one may show that 
\be\label{eqn:boundselmer}\textup{char}\left(H^1_{\FFc}(\QQ,\TT)/R\cdot\kappa_1\right)=\textup{char}\left(H^1_{\FFc^*}(\QQ,\TT^*)^{\vee}\right)\cdot h\ee
 for some $h\in R$, following the arguments of \cite{ochiaideform}; see particularly the proof of Theorem 2.4 in loc.cit. We will make use Nekov\'a\v{r}'s descent formalism alluded to above in order to verify that that $h\in R^\times$ and therefore, deduce Theorem~\ref{thm:sharpening}.
  
Assume without loss of generality that $\kappa_1\neq 0$ (as otherwise, Theorem~\ref{thm:weakleo1} shows that both sides of the claimed equality are $0$ and Theorem~\ref{thm:sharpening} holds true for trivial reasons). 
Let ${\pmb{\dot{\kappa}}}$ be the image of the generator $\pmb{\kappa}$ under the map 
$$ \overline{\mathbf{KS}}({\TT},\FFc,\PP) \lra  \overline{\mathbf{KS}}(\dot{T},\FFc,\PP)$$ 
induced from $\pi_0:\,R\twoheadrightarrow S_0$. The diagram (\ref{diagram:primitive}) (with $T$ replaced by $\dot{T}$) shows that the Kolyvagin system ${\pmb{\dot{\kappa}}}$ is primitive. Let $\dot{\kappa}_1\in H^1_{\FFc}(\QQ,\dot{T})$ denote its leading term. It follows from \cite[Theorem 5.2.14]{mr02} that
\be\label{eqn:sharpbasecase}
\textup{char}\left(H^1_{\FFc^*}(\QQ,\dot{T}^*)^{\vee}\right) = \textup{char}\left(H^1_{\FFc}(\QQ,\dot{T})/S_0\cdot\dot{\kappa}_1\right)\,.
\ee
One may inductively verify that the hypotheses of Lemma~\ref{lem:maindescentlemma} hold true with $S=S_{i-1}$\,, $S[[t]]=S_i$\,, $M=H^1_{\FFc}(\QQ,T_i)/S_i\cdot \pi_i(\kappa_1)$ and $N=H^1_{\FFc^*}(\QQ,T_i^*)^\vee$ for every $i=1, 2, 3$ (basically, using Theorem~\ref{thm:h1free} and Proposition~\ref{prop:propertiesofselmercomplex}) and conclude that
$$\textup{char}_{S_i}\left(H^1_{\FFc}(\QQ,{T_i})/S_i\cdot\pi_i(\kappa_1)\right)=\textup{char}_{S_i}\left(H^1_{\FFc^*}(\QQ,{T_i}^*)^{\vee}\right) \cdot \pi_i(h)\,.$$
This shows (applied with $i=0$) along with (\ref{eqn:sharpbasecase}) that $\pi_0(h)\in S_0^\times$ and therefore that $h\in R^\times$, as desired.
\end{proof}

 \subsection{Modular Galois representations and the universal Kolyvagin system}
 \label{subsec:example1}
 Let $N$ be a positive integer which is prime to $p$ and suppose $p\geq 5$. Let $\omega$ denote the mod $p$ cyclotomic character (of $G_\QQ$), which we view both as a $p$-adic
and complex character by fixing embeddings $\overline{\QQ}
\hookrightarrow \overline{\QQ}_p$, $\overline{\QQ} \hookrightarrow
\mathbb{C}$, as well as a Dirichlet character modulo $Np$. Let
$$f=\sum_{n=1}^{\infty} a_nq^n \in S_k(\Gamma_0(Np),{\omega}^j)$$
be a normalized  cuspidal elliptic modular newform of even weight $k$, which is an
eigenform for the Hecke operators $T_\ell$ for $\ell\nmid Np$ and
$U_\ell$ for $\ell \mid Np$. Let $E/\QQ_p$ be a finite extension
that contains $a_n$ for all $n$ and let $\oo=\oo_E$ be its ring of integers and $\pi=\pi_E$ a fixed uniformizer. 
Let
$\rho_f: G_\QQ \ra \textup{GL}_2(E)$ be the Galois representation
attached to $f$ by Deligne.  Throughout this subsection,
we assume the following hypothesis holds true:

\emph{The semi-simple residual representation $\overline{\rho}_f$ associated to $\rho_f$ is absolutely
irreducible.}

As explained in \cite[12.2.2]{nek}, the Galois representation $\rho_f$ admits a self-dual twist ${\rho}$, which is Deligne's Galois representation associated to a twisted cusp form $\widetilde{f}$ of weight $k$ with trivial central character.  We assume in addition that the tame level $N(\widetilde{f})$ of $\widetilde{f}$ is square-free. In view of \cite[Lemma 12.3.10]{nek} this amounts to saying that the local automorphic representation $\pi(\widetilde{f})_\ell$ at primes $\ell \mid N(\widetilde{f})$ are \emph{twisted Steinberg} (in the sense of \cite[12.3.6.2]{nek}).

Let us also choose the prime $p$ sufficiently large so that Weston's theorem that we have referred to in Remark~\ref{example:weston} applies for $\widetilde{f}$ (and its residual representation $\overline{\rho}$). In Sections~\ref{subsec:example1} and \ref{subsec:example2}, we shall study the deformations of this mod $p$ representation $\overline{\rho}$. This seems to be the technically most involved case as compared to the case of non-critical twists; for example, the hypotheses (\textbf{H.Tam}) and (\textbf{H.nA}) are easier to verify in these cases. 
 
 \begin{define} \label{def:TTmodular} 
 Let $R$ denote the universal deformation ring of $\overline{\rho}$. Note that $R$ is isomorphic to  $\oo[[X_1,X_2,X_3]]$ by Weston's theorem. Let $\pmb{\rho}$ the universal deformation of $\overline{\rho}$ and let $\TT$ the deformation space (a free $R$-module of rank two) on which $G_\QQ$ acts by $\pmb{\rho}$.
  \end{define}
   It is easy to see that we have $\chi(\overline{T})=1$ for the residual representation of $\TT$.
 \begin{example} 
\label{example:ellipticcurve}
When the eigenform $f$ has weight $k=2$ and has trivial central character, then $f=\widetilde{f}$ corresponds to an elliptic curve $E_{/\QQ}$ without CM that has split-multiplicative reduction at all primes $\ell$ dividing its conductor $N_E$. In this case, $\overline{T}=E[p]$ is the $p$-torsion subgroup of $E(\overline{\QQ})$ and $\overline{\rho}_E: G_\QQ \ra \textup{Aut}(\overline{T})=\textup{GL}_2(\mathbb{F}_p)$
is the mod $p$ Galois representation attached to $E$. The Weil-pairing shows that $\chi(\overline{T})=1$. 

Assuming the hypotheses (\textbf{F1})-(\textbf{F3}), we conclude as in Example~\ref{example:flach} that the deformation problem in this situation is unobstructed. Let $R\cong  \ZZ_p[[X_1,X_2,X_3]]$ be the universal deformation ring, $\pmb{\rho}_E$ be the universal deformation of $\overline{\rho}_E$ and $\TT$ be the free rank-two $R$-module on which $G_\QQ$ acts by $\pmb{\rho}_E$. 
\end{example}
\subsubsection{The hypotheses of Section~\ref{subsubsec:setup}} 
\label{subsub:contentsofhypo}
We explain the contents of the hypotheses on the main technical results presented in the previous section, when applied to our current situation. 

For $\TT$ as in Definition~\ref{def:TTmodular}, observe that (\textbf{H1}), (\textbf{H3}) and (\textbf{H4}) are verified automatically. The hypothesis (\textbf{H2}) holds true also for all large enough $p$ thanks to \cite{serreimage} when $f$ is as in Example~\ref{example:ellipticcurve}, otherwise thanks to \cite{ribet85}. We assume that $\TT$ verifies (\textbf{H2}), as well as (\textbf{\textup{H.Tam}}) and (\textbf{\textup{H.nA}}). We discuss the last two hypotheses in Proposition~\ref{prop:tamimpliestam} and Remark~\ref{rem:nonanamolous} below. 

\begin{prop}
\label{prop:tamimpliestam}
For $\widetilde{f}$ and $\TT$ as in Definition~\ref{def:TTmodular}, assume that
\begin{itemize}
\item $p$ does not divide the Tamagawa number $c_\ell(\widetilde{f})$ at $\ell$ (defined as in \cite[I.4.2.2]{FoPR}),
\item $p$ does not divide $\ell-1$.
\end{itemize}
Then $($\textbf{\textup{H.Tam}}$)$
\,holds true for $\TT$.
\end{prop}
\begin{proof}
We will only provide the details for the case when $f=\widetilde{f}$ is of weight two; the general case may be handled in a similar manner. Let $E$ denote the elliptic curve attached to $f$ and  $T=T_p(E)$ be the $p$-adic Tate module of $E$. Under the running assumptions, there is a non-split exact sequence
  \be\label{eqn:Two} 0\lra\ZZ_p(1) \lra T \lra \ZZ_p \lra 0\ee
  of $\ZZ_p[[G_{\ell}]]$-modules. Let $\sigma=\partial (1)\in H^1(\QQ_\ell,\ZZ_p(1))$ where $\partial: \ZZ_p \ra H^1(\QQ_\ell,\ZZ_p(1))$ is the connecting homomorphism in the long exact sequence of the $G_{\QQ_\ell}$-cohomology of the sequence (\ref{eqn:Two}). Kummer theory gives an isomorphism
   \be\label{eqn:ord}\textup{ord}_\ell: H^1(\QQ_\ell,\ZZ_p(1))\stackrel{\sim}{\lra} \QQ_\ell^{\times,\wedge}\stackrel{\sim}{\lra} \ZZ_p.\ee
 According to~\cite{cartaneilenberg} pp. 290 and 292, $-\sigma$ is the extension class
of the sequence (\ref{eqn:Two}) inside
$\textup{Ext}^1_{\ZZ_p[G_{\QQ_\ell}]}(\ZZ_p,\ZZ_p(1))=H^1(\QQ_\ell,\ZZ_p(1))$. Hence
$\textup{ord}_\ell(\sigma)\neq 0$ as the sequence (\ref{eqn:Two})
is non-split and by \cite[Prop. 3.3]{kbbheegner} it further follows  that $\textup{ord}_\ell(\sigma) \in \ZZ_p^\times$ and therefore $\partial$ is surjective.

We have the following diagram below with exact rows and commutative squares
$$\xymatrix{&&H^0(\QQ_\ell,T)\ar[d]\ar[r]&\ZZ_p\ar@{->>}[d]\ar@{->>}[r]^(.30){\partial}&H^1(\QQ_\ell,\ZZ_p(1))\cong \ZZ_p\ar@{->>}[d] \\
0\ar[r]&H^0(\QQ_\ell,\pmb{\mu}_p)\ar[r]&H^0(\QQ_\ell,\overline{T})\ar[r]&\ZZ/p\ZZ\ar[r]^(.30){\bar{\partial}}&H^1(\QQ_\ell,\pmb{\mu}_p)\cong \ZZ/p\ZZ
}$$
 This shows that the map $\bar{\partial}$ is surjective as well and hence
$$H^0(\QQ_\ell,\overline{T})\cong H^0(\QQ_\ell,\pmb{\mu}_p)=0$$
as we assumed $p\nmid \ell-1$.
\end{proof}
\begin{rem}
\label{rem:nonanamolous}
In the situation of Example~\ref{example:ellipticcurve}, the hypothesis (\textbf{H.nA}) translates into the requirement that $p$ is \emph{non-anomalous} for $E$ (in the sense of \cite{mazur-anom}). Given an elliptic curve $E_{/{\QQ}}$\,, Mazur in loc.cit. explains that anomalous primes should be scarce. For example, a lemma due to R. Greenberg shows that if $E(\QQ)$ has a point of order $2$, then any prime $p>5$ at which $E$ has good reduction is non-anomalous. 

In the case of modular forms of higher weight, it is easy to see that the hypothesis (\textbf{H.nA}) holds true equally often.
\end{rem}
\subsubsection{Interpolation of Beilinson-Kato Kolyvagin systems}
\label{subsec:interpolateBK}
Suppose $\TT$ is as in Definition~\ref{def:TTmodular} for which the hypotheses (\textbf{H1})-(\textbf{H4}), (\textbf{H.Tam}) and (\textbf{H.nA}) simultaneously hold true.

Let $g$ be any elliptic newform of weight $\omega\geq 2$ and let
$$\rho_g : G_\QQ \lra \textup{GL}_2(\oo_g)$$
be the Galois representation attached to $g$ by Deligne with coefficients in the ring of integers $\oo_g$ of a finite extension $\Phi_g$ of of $\QQ_p$. Let $T_g$ be the free $\oo_g$ module of rank $2$ on which $G_\QQ$ acts via $\rho_g$. Let $\mm_g$ denote the maximal ideal of $\oo_g$ and let $\overline{\rho}_g$ the residual representation of $\overline{\rho}_g$ $\mod \mm_g$. Suppose that $\overline{\rho}_g\cong\overline{\rho}$ so that $\rho_g$ is a deformation of $\overline{\rho}$ to the ring $\oo_g$. We thus have a ring homomorphism $\varphi_g: R\ra \oo_g$ that induces and isomorphism $T_g\cong \TT\otimes_{\varphi_g} \oo_f$, and by functoriality a commutative diagram
\be\label{eqn:diagramKSdeform}
\xymatrix{\overline{\textbf{KS}}(\TT,\FFc,\PP)\ar[rr]^{\varphi_g}\ar@{->>}[dr]&&\overline{\textbf{KS}}(T_g,\FFc,\PP)\ar[dl]\\
&{\textbf{KS}}(\overline{T},\FFc,\PP)&
} \ee
Until the end of this section, we let $0\neq \pmb{\kappa} \in \overline{\textbf{KS}}(\TT,\FFc,\PP)$ denote a universal big Kolyvagin system and we let $\varphi_g(\pmb{\kappa})$ be its image in $\overline{\textbf{KS}}(T_g,\FFc,\PP)$. Let $\pmb{\kappa}_g^{\textup{BK}}\in \overline{\textbf{KS}}(T_g,\FFc,\PP)$ be the Kolyvagin system obtained from the Beilinson-Kato Euler system attached to the modular form $g$ (as in \cite[Theorem 3.2.4]{mr02}).

\begin{thm}[Interpolation]
\label{thm:interpolation}
There is a $\lambda_g \in \oo_g$ such that
$$\lambda_g\cdot \varphi_g(\pmb{\kappa})=\pmb{\kappa}_g^{\textup{BK}}.$$
\end{thm}
\begin{proof}[Proof of Theorem~\ref{thm:interpolation}]
Let $\bar{\pmb{\kappa}}$ be the image of $\pmb{\kappa}$ in ${\textbf{KS}}(\overline{T},\FFc,\PP)$. By Theorem~\ref{thm:KSmain} it follows that $\bar{\pmb{\kappa}}\neq 0$, so it follows by \cite[Theorem 5.2.10(ii)]{mr02} and the commutative diagram (\ref{eqn:diagramKSdeform}) that $\varphi_g(\pmb{\kappa})$ generates the free $\oo_g$-module $\overline{\textbf{KS}}(T_g,\FFc,\PP)$ of rank one.
\end{proof}
\begin{rem}\label{rem:lamdasareunits}
Theorem~\ref{thm:interpolation} states that the \emph{improvements} (by the factors $\lambda_g$) of the Beilinson-Kato Kolyvagin systems  interpolate to give rise to the big Kolyvagin system, rather than the Beilinson-Kato Kolyvagin systems themselves. The Kolyvagin system $\varphi_g(\pmb{\kappa})$ is called an \emph{improvement} to $\pmb{\kappa}_g^{\textup{BK}}$ as the bound (on the relevant Selmer group) obtained using $\varphi_g(\pmb{\kappa})$ improves that obtained using $\pmb{\kappa}_g^{\textup{BK}}=\lambda_g\cdot \varphi_g(\pmb{\kappa})$ by a factor of $\lambda_g$. In particular, when the Kolyvagin system $\pmb{\kappa}_g^{\textup{BK}}$ is itself primitive (in the sense of \cite[Definition 4.5.5]{mr02}, see also Corollary 5.2.13(ii) and Theorem 5.3.10(iii) in loc.cit.) we have $\lambda_g\in \oo_f^\times$. It follows from \cite{skinnerurbanmainconj} that this is indeed the case in a variety of cases.
\end{rem}
\begin{rem}
\label{rem:questionDoeslambdainterpolate}
The most interesting question regarding the interpolation factors $\{\lambda_g\}$ is the following: Does there exist a global regular function $\lambda$ on the universal deformation space $\textup{spec} R$ that takes the value $\lambda_g$ at the $\oo_g$-valued modular point $\varphi_g$, for every modular form $g$ as above? In Sections~\ref{subsec:example2} and \ref{subsub:generalex} below, we tackle (and partially answer) somewhat modest versions (in the sense that they concern smaller deformation spaces) of this question. It is also worth reminding the reader that an affirmative answer to this question has powerful arithmetic consequences, as we have pointed out in Remark~\ref{rem:lamdaareunits}.
\end{rem}
\begin{rem}
\label{rem:universalpadicLfunc} In Theorem~\ref{thm:sharpening} above we explained how a universal Kolyvagin system $\pmb{\kappa}$ produces bounds on the corresponding Selmer groups. Note that:
\begin{itemize}
\item[(i)] The Kolyvagin system $\pmb{\kappa}_g^{\textup{BK}}$ is related to a special value of $L$-function attached to $g$, by the work of Kato~\cite[\S14]{ka1}; 
\item[(ii)] The (classical) modular points are Zariski dense in $\textup{Spec}(R)$ thanks to the main result of \cite{bockleajm2001}.
\end{itemize}
Thus the leading term $\kappa_1$ not only controls the dual Selmer group via Theorem~\ref{thm:sharpening}, it also very much resembles what might be thought of as a ($3$-variable) $p$-adic $L$-function on the universal deformation space. We shall elaborate on this point in Sections \ref{subsec:example2} and \ref{subsub:generalex} below and verify that one may recover the $2$-variable $p$-adic $L$-functions associated to various pieces of the universal deformation space from $\kappa_1$.
\end{rem}

\begin{prop}
\label{prop:BKleadingtermnonzero}
The leading term $\kappa_1$ of universal Kolyvagin system $\pmb{\kappa}$ for $\TT$ is non-zero.
 \end{prop}
\begin{proof}
This follows from \cite[Theorem 12.5(2)]{ka1} considering the canonical map  $$\overline{\mathbf{KS}}(\TT,\FFc,\PP)\stackrel{\varphi_{\LL}}{\lra} \overline{\mathbf{KS}}(T_{\widetilde{f}}\otimes\LL,\FFc,\PP)$$
that is induced from the ring homomorphism $\varphi_{\LL}:R\ra \oo[[\Gamma]]$ (which is deduced from the existence of the deformation $T_{\widetilde{f}}\otimes\LL$).

\end{proof}
  \subsection{Universal Kolyvagin system on Hida's nearly-ordinary deformation space}
 \label{subsec:example2}
 Suppose $f$ and $\TT$ are as in Section~\ref{subsec:interpolateBK}, verifying the hypotheses (\textbf{H1})-(\textbf{H4}), (\textbf{H.Tam}) and (\textbf{H.nA}) simultaneously. We further assume in this section that $f$ is $p$-ordinary and $p$-distinguished. Let $\Gamma^{\textup{w}}=1+p\ZZ_p$. Identify $\Delta=(\ZZ/p\ZZ)^\times$ by $\pmb{\mu}_{p-1}$ via the Teichm\"uller character $\omega$ so that we have $\ZZ_p^\times\cong \Delta\times\Gamma^{\textup{w}}.$ Set $\LL^{\textup{w}}=\ZZ_p[[\Gamma^{\textup{w}}]]$. Let $\frak{h}^{\textup{ord}}$ Hida's
universal ordinary Hecke algebra parametrizing Hida family passing through $f$, which is finite flat over $\LL^{\textup{w}}$ by~\cite[Theorem
1.1]{hida}. We will recall some basic properties of
$\frak{h}^{\textup{ord}}$, for details the reader may
consult~\cite{hida, hidafamily} and \cite[\S2]{emertonpollackweston}
for a survey. 

The eigenform $f$ corresponds to an \emph{arithmetic specialization} $\varphi_f: \frak{h}^{\textup{ord}}\ra \oo\,.$
Decompose $\frak{h}^{\textup{ord}}$ into a direct sum of its
completions at maximal ideals and let
$\frak{h}^{\textup{ord}}_{\mm}$ be the (unique) summand through
which $\varphi_f$ factors. The localization of
$\frak{h}^{\textup{ord}}$ at $\ker(\varphi_f)$ is a discrete
valuation ring~\cite[\S12.7.5]{nek}, and hence there is a unique
minimal prime $\frak{a} \subset \frak{h}^{\textup{ord}}_{\mm}$ such
that $\varphi_f$ factors through the integral domain $\mathcal{R}=\frak{h}^{\textup{ord}}_{\mm}/\frak{a}.$ The $\LL^{\textup{w}}$-algebra $\mathcal{R}$ is called the branch of the Hida
family on which $f$ lives, by duality it corresponds to a family $\mathbb{F}$ of ordinary modular forms. Hida in~\cite{hidafamily} gives a
construction of a  big $G_{\QQ}$-representation $\mathcal{T}$ with
coefficients in $\mathcal{R}$. It follows from  \cite{wiles,taylorwiles} that the ring $\RR$ is Gorenstein of dimension two and that $\mathcal{T}$ is a free $\mathcal{R}$-module of rank two. We set $\frak{R}=\RR\otimes_{\ZZ_p}\LL$ and $\frak{T}=\mathcal{T}\otimes_{\ZZ_p}\LL$, where $\LL=\ZZ_p[[\textup{Gal}(\QQ_\infty/\QQ)]]$ is the cyclotomic Iwasawa algebra as usual and $G_\QQ$ acts on $\frak{T}$ diagonally.  The representation $\frak{T}$ is what Ochiai calls the the \emph{universal ordinary deformation} of $\overline{T}$.

Let $\pmb{\kappa}$ be a fixed universal Kolyvagin system and let $\pmb{\kappa}^{\textup{n.o.}}$ denote its image under the natural map $\overline{\mathbf{KS}}(\TT,\FFc,\PP) \ra \overline{\mathbf{KS}}(\frak{T},\FFc,\PP)$ induced by the universality of $\TT$. We call $\pmb{\kappa}^{\textup{n.o.}}$ the \emph{nearly-ordinary universal Kolyvagin system}. Note that it follows from Theorem~\ref{thm:KSmain} that the $\frak{R}$-module $\overline{\mathbf{KS}}(\frak{T},\FFc,\PP)$ is free of rank one and it is generated by the nearly-ordinary universal Kolyvagin system. This latter fact essentially recovers a result due to Ochiai~\cite{ochiaideform}, where he interpolates Beilinson-Kato Euler systems\footnote{whereas we carry this out in the level of Kolyvagin systems.} along the ordinary locus of the universal deformation space. For any ordinary eigenform $g$ that lives in the branch $\RR$ of the Hida family, let
$$\varphi_g: \RR\lra \oo_g$$
denote the corresponding arithmetic specialization and $T_g=\mathcal{T}\otimes_{\varphi_g}\oo_g$ the associated Galois representation, where $\oo_g$ is the integers of a finite extension of $\QQ_p$. Let
$$\pmb{\kappa}^{\textup{BK}}\in \overline{\mathbf{KS}}(T_g\otimes\LL,\FFc,\PP)$$
be the $\LL$-adic Kolyvagin system for to the cyclotomic deformation $T_g\otimes \LL$ obtained from the Beilinson-Kato Euler system as in \cite[\S6.2]{mr02}. We shall later specify a normalization of the Beilinson-Kato elements befitting our needs; however the following holds true regardless of such choice.  
\begin{thm}
\label{thm:ochiaiKS}
Let $\overline{\mathbf{KS}}(\frak{T},\FFc,\PP)\stackrel{\varphi_g}{\lra} \overline{\mathbf{KS}}(T_g\otimes\LL,\FFc,\PP)$
denote the map induced from the arithmetic specialization $\varphi_g$ above by functoriality. Then there is a $\lambda_g \in \oo_g[[\Gamma]]$ such that
$$\lambda_g\cdot\varphi_g(\pmb{\kappa}^{\textup{n.o.}})=\pmb{\kappa}^{\textup{BK}}.$$
\end{thm}
\begin{proof}
Identical to the proof of Theorem~\ref{thm:interpolation}.
\end{proof}
\begin{cor}
\label{cor:nonvanishingkappa1}
Let  $\kappa_1^{\textup{n.o.}} \in H^1(\QQ,\frak{T})$ denote the leading term of the nearly-ordinary universal Kolyvagin system. Then ${\kappa}_1^{\textup{n.o.}} $ is non-vanishing.
\end{cor}
\begin{proof}
For $g$ as above, Kato proved that ${\kappa}^{\textup{BK}}_1 \in H^1(\QQ,T_g\otimes\LL)$ is non-zero. Corollary follows from Theorem~\ref{thm:ochiaiKS}.
\end{proof}
Note that  one may recover the well-known results of \cite{ochiaitwovarMC} when Theorem~\ref{thm:ochiaiKS} and Corollary~\ref{cor:nonvanishingkappa1} plugged in the Kolyvagin system machinery (Theorem~\ref{thm:sharpening}). One novelty resulting from our approach is that the Kolyvagin systems interpolated along the nearly-ordinary locus are simply obtained by \emph{restriction} from the universal deformation space. 

In what follows, we shall discuss the questions raised in Remarks~\ref{rem:questionDoeslambdainterpolate} and \ref{rem:universalpadicLfunc} over the nearly-ordinary locus $\textup{Spec}(\frak{R})$ of the deformation space. In particular, we show in Theorem~\ref{thm:univKSvsOchiaiKS} that the factors $\lambda_g$ of Theorem~\ref{thm:ochiaiKS} interpolate (as $g$ moves along the Hida family). This allows us to compare the leading term $\kappa_1^{\textup{n.o.}} $ of the nearly-ordinary universal Kolyvagin system to Ochiai's \emph{two-variable} (optimal) Beilinson-Kato element and deduce the desired variational results.

We remark that under our running hypotheses on $\overline{T}$, the representation $\frak{T}$ admits a $G_{\QQ_p}$-stable submodule $F_p^+\,\frak{T}\subset \frak{T}$ that is saturated and free of rank one (as an $\frak{R}$-module). Set $F_p^-\frak{T}=\frak{T}/F_p^+\frak{T}$ and let $\textup{loc}_p^s$ denote the \emph{singular projection}
$$\textup{loc}_p^s: H^1(\QQ,\frak{T})\lra \frac{H^1(\QQ_p,\frak{T})}{H^1(\QQ_p,F_p^+\frak{T})}=:H^1_s(\QQ_p,\frak{T})\,.$$
We also set $F_p^\pm\overline{T}:=F_p^\pm\frak{T}\otimes_{\frak{R}} k$.

Let $\mathcal{L}^{\textup{Kit}}(\mathbb{F}) \in \frak{R}$ denote Kitagawa's two-variable $p$-adic $L$-function associated to the Hida family $\mathbb{F}$, which is normalized in accordance with \cite[Sec. 6.3]{ochiaitwovarMC} (and denoted by $L_{p,b}^{\textup{Ki}}(\mathcal{T})$ in loc.cit). The main theorem of \cite{ochiaiajm} equips us with a \emph{Hida-theoretic Coleman map}
$$\Xi_d: H^1_s(\QQ_p,\frak{T}) \lra \frak{R}$$
where $d$ is a basis of ${D}_{\textup{cris}}(F_p^+\frak{T})$. The map $\Xi_d$ is injective, has pseudo-null cokernel and it verifies 
$$\Xi_d\left(\textup{loc}_p^s(\mathcal{Z}_1^{\textup{BKO}})\right)=\mathcal{L}^{\textup{Kit}}(\mathbb{F})\,.$$
Here $\mathcal{Z}_1^{\textup{BKO}} \in H^1(\QQ,\frak{T})$ is the initial term of Ochiai's optimized Beilinson-Kato Euler system for $\frak{T}$ (denoted by $\mathcal{Z}^{\textup{Ki}}_{b,d}(1)$ in \cite{ochiaitwovarMC}). 
\begin{define}
\label{def:formnonexceptionalmodp}
We say that the cuspidal newform $f=\sum a_nq^n$ is \emph{exceptional mod $p$} if $p-1 \mid \frac{k+j}{2}-1$ and $a_p \equiv 1 (\textup{mod\,} \varpi)$. 
\end{define}
\begin{prop}
\label{prop:nonexceptionalimpliesfree}Suppose that we are in the setting of Theorem~\ref{thm:ochiaiKS} and assume that the form $f$ is non-exceptional mod $p$. Then the natural injection $H^1_s(\QQ_p,\frak{T}) \hookrightarrow H^1(\QQ_p,F_p^-\,\frak{T})$ is an isomorphism of free $\frak{R}$-modules of rank one. Furthermore, $\Xi_d$ is an isomorphism as well.
\end{prop}
\begin{proof}
Let $\alpha_{f,p}$ denote the unramified character of $G_{\QQ_p}$ sending the arithmetic Frobenius to $a_p$. The running assumptions imply that the $k$-valued character $\beta_{f,p}:=(\alpha_{f,p}\cdot\omega^{1-\frac{k+j}{2}} \mod \varpi)$ is non-trivial. It follows from \cite[Proposition 2.4.1]{how2} that $G_{\QQ_p}$ acts on the one-dimensional $k$-vector space $F_p^-\overline{{T}}$ by the character $\beta_{f,p}$ and we conclude that $H^0({\QQ_p},F_p^-\overline{T})=0$. This shows by local duality and Nakayama's lemma that $H^2(\QQ_p, F_p^+\frak{T})=0$ (recall that we are working with the self-dual twist of the Hida family) and we conclude with the proof that the injection $H^1_s(\QQ_p,\frak{T}) \hookrightarrow H^1(\QQ_p,F_p^-\,\frak{T})$ is a surjection as well. It also follows from our hypothesis (\textbf{H.nA}) that $H^0({\QQ_p},F_p^+\overline{T})=0$ and by local duality along with Nakayama's lemma that $H^2(\QQ_p,F_p^-\frak{T})=0$. Similarly we may prove that $H^2(\QQ_p,F_p^-\frak{T}^{D})=0$ where $\frak{T}^D:=\textup{Hom}_\frak{R}(\frak{T},\frak{R})(1)$. As explained in \cite[Remark 2.8]{kbbesrankr}, we may now deduce that the $\frak{R}$-module $H^1(\QQ_p,F_p^-\frak{T})$ is free, since the ring $\frak{R}$ is Gorenstein. On the other hand, as $\Xi_d$ injects $H^1_s(\QQ_p,\frak{T})$ into a cyclic $\frak{R}$-module, $H^1(\QQ_p,F_p^-\frak{T})$ is in fact free of rank one and the first part of the proposition is now proved.

Note now that $\Xi_d\left(H^1_s(\QQ_p,\frak{T})\right)$ is a free submodule of $\frak{R}$ and Ochiai has proved that the quotient $\frak{R}/\Xi_d\left(H^1_s(\QQ_p,\frak{T})\right)$ is pseudo-null and therefore trivial, verifying the second assertion.
\end{proof}

\begin{thm}
\label{thm:univKSvsOchiaiKS}Suppose that we are in the setting of Proposition~\ref{prop:nonexceptionalimpliesfree}. There exists an element $\lambda=\lambda(\mathbb{F}) \in \frak{R}$ such that for every $g$ as above we have $\lambda_g=\lambda(g)$, where $\lambda(g)$ denotes the image $\varphi_g(\lambda)$ of $\lambda$ under the specialization map $\varphi_g:\frak{R}\ra \oo_g[[\Gamma]]$.
\end{thm}
\begin{proof}
Set $\frak{f}=\Xi_d\left(\textup{loc}_p^s(\kappa_1^{\textup{n.o.}})\right)$ and $\frak{g}=\Xi_d\left(\textup{loc}_p^s(\mathcal{Z}_1^{\textup{BKO}})\right)$. The assertion amounts to the statement that $\frak{f}\mid\frak{g}$, which is what we verify now. If $f \nmid g$, one may use \cite[Section 6.3]{fouquetcompositio} to find an $S$-valued point $\pi_S:\frak{R}\ra S$ of $\textup{Spec}(\frak{R})$ (where $S$ is a discrete evaluation ring) with the following properties:
\begin{itemize}
\item $\pi_S(f) \neq 0$, 
\item $\pi_S(f) \nmid \pi_S(g)$\,.
\end{itemize}
Set $T_S=\frak{T}\otimes_{\pi_S}S$ and let $\pmb{\kappa}^S=\pi_S(\pmb{\kappa}^{\textup{n.o.}}) \in \overline{\mathbf{KS}}(T_S,\FFc,\PP)$. Let $\frak{m}_S$ denote the maximal ideal of $S$ and $\overline{T}_S=\frak{T}\otimes S/\frak{m}_S$. Using the fact that the reduction of $\pmb{\kappa}^{\textup{n.o.}}$ modulo the maximal ideal of $\frak{R}$ is non-vanishing, it follows that the image of  $\pmb{\kappa}^S$ in ${\mathbf{KS}}(\overline{T}_S,\FFc,\PP)$ is non-trivial as well. Theorem 5.2.10 of \cite{mr02} now shows that $\pmb{\kappa}^S$ generates the module $\overline{\mathbf{KS}}(T_S,\FFc,\PP)$.  The Beilinson-Kato-Ochiai Euler system for $\frak{T}$ gives rise to an Euler system for $T_S$. Let 
$$\pmb{\kappa}^{\textup{O,S}} \in \textup{im}\left(\textup{ES}(T_S)\lra \overline{\mathbf{KS}}(T_S,\FFc,\PP)\right)$$ denote the image of this Euler system under the Euler systems to Kolyvagin systems map of \cite{mr02}. In particular, we have ${\kappa}^{\textup{O,S}}_1=\pi_S(\mathcal{Z}_1^{\textup{BKO}}) \in H^1_{\FFc}(\QQ,T_S)$ for the initial term. The discussion above regarding $\pmb{\kappa}^{S}$ shows that there exists $\lambda_S\in S$ with $\pmb{\kappa}^{\textup{O,S}}=\lambda_S\cdot\pmb{\kappa}^{S}$, so that
\be\label{eqn:BKOtoNOKS}\pi_S(\mathcal{Z}_1^{\textup{BKO}})={\kappa}^{\textup{O,S}}_1=\lambda_S\cdot \kappa^S_1=\lambda_S\cdot\pi_S(\kappa^{\textup{n.o.}}_1)\ee
where $\kappa_1^S \in H^1_{\FFc}(\QQ,T_S)$ as usual stands for the leading term of the Kolyvagin system $\pmb{\kappa}^{S}$. Set $F^\pm T_S:=F^\pm\frak{T}\otimes_{\pi_S} S$. The following equalities lead to a contradiction with the choice of $S$ and conclude the proof of the theorem: 
\begin{align}
\label{eqn:eqn1}\pi_S(g)&=\pi_S\left(\textup{Fitt}\left(H^1(\QQ_p,F^-\frak{T})/\frak{R}\cdot\textup{loc}_p^s(\mathcal{Z}_1^{\textup{BKO}})\right)\right)\\
\label{eqn:eqn2}&=\lambda_S\cdot\textup{Fitt}\left(H^1(\QQ_p,F^-{T}_S)/S\cdot\pi_S\left(\textup{loc}_p^s(\kappa_1^{\textup{n.o.}})\right)\right)\\
\label{eqn:eqn3}&=\lambda_S\cdot \pi_S\left(\textup{Fitt}\left(H^1(\QQ_p,F^-\frak{T})/\frak{R}\cdot\textup{loc}_p^s(\kappa_1^{\textup{n.o.}})\right)\right)\\
\label{eqn:eqn4}&=\lambda_S\cdot \pi_S(f)\,.
\end{align}
We explain these equalities. The ingredients that go in the proof of Proposition~\ref{prop:nonexceptionalimpliesfree} show that  
\be\label{eqn:eqn5}H^1(\QQ_p,F^-\frak{T})\otimes_{\pi_S}S\stackrel{\sim}{\lra}H^1(\QQ_p,F^-{T}_S)\,.\ee
The equality (\ref{eqn:eqn3}) follows from (\ref{eqn:eqn5}) whereas (\ref{eqn:eqn2}) using (\ref{eqn:BKOtoNOKS}) and (\ref{eqn:eqn5}). The equalities (\ref{eqn:eqn1}) and (\ref{eqn:eqn4}) are immediate by Proposition~\ref{prop:nonexceptionalimpliesfree}.  
\end{proof}
\begin{cor}
\label{cor:nearlyordkappa1andkitagawapadicLfunc}
For $\lambda(\mathbb{F}) \in \frak{R}$ as in the statement of Theorem~\ref{thm:univKSvsOchiaiKS}, we have 
$$\lambda(\mathbb{F})\cdot\textup{char}_{\frak{R}}\left(H^1(\QQ_p,F_p^-\,\frak{T})/\frak{R}\cdot\textup{loc}_p^s(\kappa_1^{\textup{n.o.}})\right)=\frak{R}\cdot\mathcal{L}^{\textup{Kit}}(\mathbb{F})\,\,.$$
\end{cor}
\begin{proof}
This is immediate after Theorem~\ref{thm:univKSvsOchiaiKS} and the main theorem of \cite[Corollary 6.17]{ochiaitwovarMC}.
\end{proof}
Note that Skinner in \cite{skinnersplitcyclo} has recently made use of a variational argument (essentially encoded in the equivalence of (iii) and (iv) in Corollary~\ref{cor:twovarmainconjMTTmainconj} below) so as to deduce Mazur's main conjecture for a modular form at primes $p$ of multiplicative reduction, by moving to a form where $p$ is a good ordinary prime (and where the desired result is known thanks to \cite{skinnerurbanmainconj}). Although the equivalence of (iii) and (iv) in Corollary~\ref{cor:twovarmainconjMTTmainconj} is well-known, we still would like to record it here along with the view point offered by our universal Kolyvagin system with the hope that its variants (such as those discussed in Section~\ref{subsub:generalex} below) might be useful in other contexts where the technology used in Skinner's work is no longer available. 
\begin{cor}
\label{cor:twovarmainconjMTTmainconj}
The following assertions are equivalent:
\begin{itemize}
\item[(i)] $\lambda_g \in \oo_g[[\Gamma]]^\times$ for a single arithmetic member $g$ of the family $\mathbb{F}$.
\item[(ii)] $\lambda(\mathbb{F}) \in \frak{R}^\times$ and $\lambda_g \in \oo_g[[\Gamma]]^\times$ for every arithmetic member $g$ of the family $\mathbb{F}$.
\item[(iii)] Conjecture 2.4 (the two variable main conjecture) of \cite{ochiaitwovarMC} holds true.
\item[(iv)] The cyclotomic main conjecture holds true for a single (for every) eigenform $g$ in the family $\mathbb{F}$. 
\end{itemize}
\end{cor}
\begin{proof}
The implication (ii)$\implies$(iii) follows from Poitou-Tate Global duality, used along with the proof of Theorem~\ref{thm:sharpening} and Corollary~\ref{cor:nearlyordkappa1andkitagawapadicLfunc}. Assuming the truth of (iv), \cite[Theorem 3.23]{kbb} shows that $\pmb{\kappa}^{\textup{BK}}$ is a generator of the cyclic $\LL$-module $\overline{\mathbf{KS}}(T_g\otimes\LL,\FFc,\PP)$ and (i) follows. The remaining implications are clear.
\end{proof}
 \subsection{The sheaf of universal $p$-adic $L$-function on the eigencurve}
\label{subsub:generalex}

Before giving a proof of the main technical result of this article, we present in this section applications of our universal Kolyvagin system towards a \emph{main conjecture} on the eigencurve and a related discussion on variational problems (in the flavor of those that were treated over the nearly-ordinary locus in the previous section). 

Suppose we are in the setting of Section~\ref{subsec:example1}. In particular $\overline{\rho}$ is the residual representation of the critical-twist $\widetilde{f}$ of the normalized cuspidal elliptic modular newform $f$ of even weight. Assume in addition that $\widetilde{f}$ is of finite slope, namely that the action of $\varphi$ on the potentially semi-stable Diuedonn\'e module verifies ${D}_{\textup{pst}}(V_{\widetilde{f}})^{\varphi=\alpha}\neq0$ for some $\alpha$. Here $V_{\widetilde{f}}$ is as usual Deligne's Galois representation attached to $\widetilde{f}$.

\begin{define}
\label{def:pseudogeom}
Let $E/\QQ_p$ be a finite extension and let $\oo_E$ be its ring of integers. An $E$-valued \emph{pseudo-geometric specialization} is a ring homomorphism $\psi: R\ra \oo_E$ such that the $G_{\QQ_p}$-representation is $\TT\otimes_{\psi}E$ is potentially semi-stable with distinct Hodge-Tate weights. 
\end{define}

We recall some definitions we gave in the introduction. Let $X=\textup{Spec}(R)$ denote universal deformation space. Recall that we are working under the assumption (\textbf{H.nOb}) that the deformation problem is unobstructed. Let $\frak{X}$ denote the (Berthelot) generic fiber of $\textup{Spf}(R)$ and let $R^\dagger:=\Gamma(\frak{X},\oo_{\textup{Spf}R})$. 
Let $\LL_E=\oo_E[[\Gamma]]$ be the cyclotomic Iwasawa algebra, $\frak{I}_E$ be Berthelot's analytic generic fiber of $\textup{Spf}\,\LL$ and $\LL^\dagger_E=\Gamma(\frak{I}_E,\oo_{\textup{Spf}\,\LL_E})$. Note that our $\LL^\dagger_E$  is denoted by $\LL_\infty$ in \cite{pottharstcyclo}. 
\begin{rem}
\label{rem:fromdaggertopsi} Let $\psi^\dagger : R^\dagger \rightarrow E$ be an $E$-valued point. By continuity, we have an induced $\oo_E$-valued point $R\ra \oo_E$ that we denote by $\psi$. We say that $\psi^{\dagger}$ is pseudo-geometric if $\psi$ is in the sense of Definition~\ref{def:pseudogeom}. We denote the $G_{\QQ,\Sigma}$-representation $\TT\otimes_{\psi} E$ by $V_{\psi^\dagger}$. 
\end{rem} 
\begin{define}
\label{def:pstcalledfiniteslope}
A pseudo-geometric specialization $\psi^\dagger$ is called finite-slope if $D_{\textup{pst}}(V_{\psi^\dagger})^{\varphi=\alpha}$ is non-zero for some $\alpha$.
\end{define}
\begin{define}\label{def:cyclodeformpsi} Given $\psi^\dagger \in \frak{X}(E)$ and $\psi \in X(\oo_E)$ as above, the deformation $\left(\TT\otimes_{\psi}\oo_E\right)\otimes \LL_E$ of $\overline{\rho}$ to $\LL_E$ induces a ring homomorphism 
$\psi_\LL:R\ra\LL_E$, which in turn induces a map $\psi^\dagger_{\LL}: R^\dagger \ra \LL_E^\dagger$
on the analytic global sections. Set $\widetilde{V}_{\psi^\dagger}:=\TT^\dagger\otimes_{\psi^\dagger_\LL} \LL_E^\dagger$, the \emph{cyclotomic deformation} of $V_{\psi^\dagger}$.
\end{define}

Let $\mathcal{C}(\overline{\rho})$ denote the Coleman-Mazur $\overline{\rho}$-eigencurve which admits a Zariski-analytic dense subset whose $\mathbb{C}_p$-valued points correspond bijectively to $(g,\alpha)$ where $g$ is a cuspidal eigenform (of tame level $N$) of finite-slope whose residual Galois representation has the same semi-simplification as $\overline{\rho}$. Since $\overline{\rho}$ is fixed throughout this section, we will denote $\mathcal{C}(\overline{\rho})$ simply by ${\mathcal{C}}$. Let $\lambda \in \oo(\mathcal{C})^\times$ be the $U_p$-eigenvalue function and $\kappa \in \oo(\mathcal{C})$ the weight function. Finally, let $\TT_{\mathcal{C}}$ denote the pullback of the universal deformation, which is locally free coherent sheaf on $\mathcal{C}$ of rank $2$ which equipped with a continuous $\oo(\mathcal{C})$-linear Galois action.

\begin{rem}
\label{rem:finiteslopeclassical}
One may give an alternative description of the eigencurve using a theorem of Kisin~\cite{kisinFM}: $\mathcal{C}$ is the Zariski-analytic closure of $\mathcal{C}_{\textup{pst-fs}}$, where $\mathcal{C}_{\textup{pst-fs}} \subset \frak{X}\times\mathbb{G}_m$ is the set of points $r=(\psi^\dagger,\lambda(r))$ such that $\psi^\dagger$ is finite-slope with Hodge-Tate weights $0, \kappa(r)-1$ with $\kappa(r)\in \ZZ_{\geq 2}$. 
\end{rem}

\begin{define}
\label{define:senspace}
Let $P_{\mathcal{S}}(T)=T^2+aT+b \in \mathcal{O(\frak{X})}[T]$ denote the Sen polynomial and let $\frak{X}_0$ denote Sen's null locus, which is the closed subspace cut by $b=0$, which has codimension one everywhere. 
For a point $r=(x,\alpha) \in \mathcal{C}(\overline{\QQ}_p)$, the Sen polynomial of $\rho_r$ is $P_{\mathcal{S}}(x)=T^2+a(x)T+b(x)$. It follows from a theorem of Faltings and Jordan that if $r$ is a refined modular point (in the sense of Coleman and Mazur) then $b(x)=0$ and therefore that $\mathcal{C}\subset\frak{X}_0\times \mathbb{G}_m$. 
\end{define}

\begin{define}
\label{define:singularlocus}
We say that a point $r\in \mathcal{C}$ is $p$-\emph{exceptional} if $\kappa(r)=0$ and the corresponding Galois representation $V_r$ is crystalline at $p$ with $\dim D_{\textup{cris}}(V_r)^{\varphi=\lambda(r)}=2$. We say that $r$ is \emph{unsaturated} either 
\begin{itemize}
\item[(i)] $r$ is $p$-exceptional or; 
\item[(i)] $r$ is not $p$-exceptional, $\kappa(r)$ is a positive integer and $v_p(\lambda(r))>\kappa(r)$ or; 
\item[(ii)] $r$ is not $p$-exceptional, $V_r$ has a rank one sub-$G_{\QQ_p}$-representation $V_{r}^\prime$ which is crystalline with Hodge-Tate weight $\kappa(r)$,
\end{itemize}
and otherwise we call $r$ a saturated point.
\end{define}

Let $\mathcal{D}_{\textup{rig}}^{\dagger}(\TT_{\mathcal{C}})$ denote the sheafification of the $(\varphi,\Gamma)$-module functor on the weak $G$-topology of $\mathcal{C}$ given as in \cite[Definition 1.2.4]{rliu} and $\mathcal{C}_0$ be the set of saturated points.
\begin{thm}[Liu, Kedlaya-Pottharst-Xiao]
\label{thm:lkpx}
There is a coherent subsheaf $\mathcal{F}$  of the sheaf $\mathcal{D}_{\textup{rig}}^{\dagger}(\TT_{\mathcal{C}})$ which is locally free of rank one away from exceptional points, and saturated away from the unsaturated points of $\mathcal{C}$ and restricts on this locus to a triangulation of the $(\varphi,\Gamma)$-module $D_{\textup{rig}}^{\dagger}(V_r)$ associated to $V_r$.
\end{thm}
 \begin{define}
 \label{def:universalpadicLfunction}
The quasicoherent sheaf of \emph{universal $p$-adic $L$-function} is the invertible sheaf
$$\Xi_{\overline{\rho}}:=\textup{im}\left(R\cdot\textup{loc}_p(\kappa_1) \lra H^1_{{\psi}}(\mathcal{D}_{\textup{rig}}^\dagger(\TT_{\mathcal{C}}))/H^1_{\psi}(\mathcal{F})\right)\,.$$
Here $\kappa_1$ is the initial term of \emph{any} universal Kolyvagin system (the definition of $\Xi_{\overline{\rho}}$ clearly does not depend on the choice of this  Kolyvagin system), $ H^1_{\psi}(*)$ is the Iwasawa cohomology sheaf of \cite{kpx} (where $\psi$ is a left inverse of the Frobenius operator $\varphi$ in the context of $(\varphi,\Gamma)$-modules) and the arrow is obtained as follows: Sen's theory and Tate twisting yields a morphism $\tau:\, \textup{Sp}(\LL^\dagger)\times\mathcal{C}\ra \frak{X}\times\mathbb{G}_m$ which gives by pullback a map $\TT^\dagger\ra  \TT_{\mathcal{C}}\,\widehat{\otimes}\,\LL^\dagger$. The desired arrow comes using \cite[Theorem 1.9]{pottharst} and the identification of \cite[Corollary 4.4.11]{kpx}.
\end{define} 

 We would like to think of the sheaf $\Xi_{\overline{\rho}}$ as a generalization of Perrin-Riou's \cite{pr-ast} module of algebraic $p$-adic $L$-function, whose definition she gives for the cyclotomic deformation of a motive. The interpolation property we prove in Theorem~\ref{prop:singularkatointerpolation} partially justifies why  $\Xi_{\overline{\rho}}$ deserves to be called a ``$p$-adic $L$-function''.
 \begin{define}
 \label{def:pottharstscoleman} Let $g$ be a cuspidal new eigenform of weight $k$ which has finite-slope at $p$ and let $E/\QQ_p$ be a finite extension that contains all Fourier coefficients of $g$. Let $D$ denote the $(\varphi,\Gamma)$-module associated to $g$. Let $F\subset D$ be the rank one $(\varphi,\Gamma)$-submodule of $D$ chosen as in \cite[Section 5]{pottharstcyclo} (which is characterized by the property that its potentially semi-stable Dieudonn\'e module $F_{\textup{pst}}$ is spanned by $e_i$, where $\{e_i,e_{i^\prime}\}$ is a distinguished $E$-basis of $D_{\textup{pst}}$ described fully in loc.cit). Let $\{e_i^*,e_{i^\prime}^*\} \in D_{\textup{pst}}^*$ be the dual basis of $\{e_i,e_{i^\prime}\}$. Given these choices there exists a \emph{big logarithm} $H^1_{\textup{Iw}}(\QQ_p,D/F)\stackrel{\textup{Log}_{D/F}}{\lra} D_{\textup{pcris}}\otimes\LL^\dagger$, where $D_{\textup{pcris}}$ is the potentially crystalline Dieudonn\'e module (c.f. \cite[Section 4]{pottharstcyclo}) and the cohomology group is Pottharst's (analytic) Iwasawa cohomology group. This map composed with $e_{i^\prime}^*$ yields a homomorphism
 $$\mathcal{L}_{D/F}=e_{i^\prime}^*\circ \textup{Log}_{D/F}: H^1_{\textup{Iw}}(\QQ_p,D/F)\lra \LL^\dagger_E\,.$$ 
 \end{define} 
 \begin{define}
\label{define:gammafactors}
For $g$ as above, we define following \cite{pr, pottharstcyclo} 
$$\Gamma_{j}=\ell_1^{-1}\ell_2^{-1}\cdots\ell_{j}^{-1} \in \textup{Frac}(\LL^\dagger)$$
where $\displaystyle{\ell_i=\frac{\log(\gamma\,\chi_{\textup{cyc}}(\gamma)^{-i})}{\log(\chi_{\textup{cyc}}(\gamma))}}$. We also set $\delta_i=\textup{char}_{\LL^\dagger}\, H^i_{\textup{Iw}}(\QQ_p,D/F)_{\textup{tors}}$ and $\delta_g=\delta_1^{-1}\delta_2$. As explained in \cite[Section 5]{pottharstcyclo}, the ideals $\delta_i$ are non-trivial only when $g$ has weight $2$, has semistable reduction at $p$  and the restriction of the central character to $G_{\QQ_p}$ factors through $\Gamma$. 
\end{define}
 
\begin{define}
\label{define:restrictionoftriangulation}
For a saturated point $x \in  \mathcal{C}_{\textup{cl-fs}}$, we let $F_x\subset D_x$ denote the triangulation of the associated $(\varphi,\Gamma)$-module. Notice that this is the restriction of the global triangulation $\mathcal{F}\subset \mathcal{D}_{\textup{rig}}^\dagger(\TT_{\mathcal{C}})$ by Theorem~\ref{thm:lkpx}. 
\end{define}
  \begin{thm}
  \label{prop:singularkatointerpolation}
  For an $E$-valued saturated point $x=(\psi^\dagger,\lambda(x)) \in  \mathcal{C}_{\textup{cl-fs}}$ $($where $E$ is a finite extension of $\QQ_p$$)$ let $f_{x}$ denote the corresponding (classical) eigenform with a distinguished $U_p$-eigenvalue $\lambda(x)$. For some $\frak{t}_{x} \in \LL_E$\,, the following equality of invertible ideals of $\LL_E^\dagger$ holds true:
  $$\frak{t}_{x}\cdot\al_{D_{x}/F_{x}}\circ\psi^\dagger_\LL\left(\Xi_{\overline{\rho}}\right)=\Gamma_{\kappa(x)-1}\cdot\delta^{-1}_x\al_{p,\lambda(x)^c}(f_x^c)\cdot \LL^\dagger_E\,,$$
  where $\al_{p,\lambda(x)^c}(f_{\psi^\dagger}^c)$ is the $p$-adic $L$-function attached to the complex conjugate of $f_{x}$ for the $p$-stabilization determined by $\lambda(x)^c$.
  \end{thm}
  \begin{proof}
  For $\psi :R \ra \oo_E$ as in Remark~\ref{rem:fromdaggertopsi}, let $\widetilde{T}_\psi:=\TT\otimes_{\psi}\LL_E$ and let $\pmb{\kappa}^\psi \in \overline{\mathbf{KS}}(\widetilde{T}_\psi,\FFc,\PP)$ be the image of (any) universal Kolyvagin system $\pmb{\kappa}$.  It follows from Kato's explicit reciprocity law and the (unpublished) work of Kato-Kurihara-Tsuji that there is a Beilinson-Kato Kolyvagin system  $\pmb{\kappa}^{\textup{BK},\psi} \in \overline{\mathbf{KS}}(\widetilde{T}_\psi,\FFc,\PP)$ attached to the modular form $f_x$  whose initial term verifies 
$$\al_{D_{x}/F_{x}}(\textup{loc}_p({\kappa}^{\textup{BK},\psi}_1))=\Gamma_{\kappa(x)-1}\cdot\delta^{-1}_x\al_{p,\lambda(x)^c}(f_x^c)\cdot \LL^\dagger_E\,.$$  
As the $\LL$-module  $\overline{\mathbf{KS}}(\widetilde{T}_\psi,\FFc,\PP)$ is free of rank one and generated by $\pmb{\kappa}^\psi$, there is $\frak{t}_{x} \in \LL$ such that $\pmb{\kappa}^{\textup{Kato},\psi}=\frak{t}_{x}\cdot \pmb{\kappa}^{\psi}$. 

  \end{proof}
  Inspired by the conclusion of Corollary~\ref{cor:nearlyordkappa1andkitagawapadicLfunc} we raise the following two questions. It would be very desirable to have affirmative answers to these questions as they would have interesting consequences regarding the variational behavior of arithmetic data in families, similar to those over the nearly-ordinary locus indicated in Section~\ref{subsec:example2} above. 
\begin{question}
\label{question:txinterpolate1}
 Is it possible to interpolate $\frak{t}_x$ over an affinoid subdomain $\textup{Sp}(S)$ of the eigencurve that contains no unsaturated points to an element $\frak{t}_S \in S\,\widehat{\otimes}\,\LL^\dagger$?
 If yes, is it then possible to patch the $\frak{t}_S$ to a quasicoherent sheaf $\frak{T}$ over $\mathcal{C}_0$?
\end{question}
In relation to these questions, we propose the following Conjecture\footnote{As far as the author understands, much of this conjecture has been settled by David Hansen.} that should be of independent interest. Its first part predicts that the Coleman-Perrin-Riou maps interpolate over sufficiently nice affinoid subdomains $\textup{Sp}(A)$ of the eigencurve. Its second and third parts have to do with the behavior of the Beilinson-Kato elements over $\textup{Sp}(A)$, in line with the construction of the sheaf $\Xi_{\overline{\rho}}$ over $\mathcal{C}$. Theorem~\ref{thm:firstquestionhasaffrimativeanswerifconjholdstrue} below shows that first of the questions raised above holds true assuming the truth of this conjecture. See also Remark~\ref{rem:globalcolemantrivialization} regarding the second question.
\begin{conj}
\label{conj:bigColeman} Let $x_0\in \mathcal{C}_{\textup{cl-fs}}$ be a classical saturated point on the eigencurve. There exists an admissible affinoid subdomain neighborhood $\textup{Sp}(A)\subset\mathcal{C}$ of $x_0$ that contains no unsaturated points and an element $\alpha,\beta,\theta \in A$ with the following properties:
\begin{itemize}
\item[(i)] There exists a \emph{Coleman-trivialization} 
$$\frak{Log}_A:\,H^1_{{\psi}}(\mathcal{D}_{\textup{rig}}^\dagger(\TT_{A}))/H^1_{\psi}(\mathcal{F}(A)) \lra A\,\widehat{\otimes}\,\LL^\dagger$$
such that for every $E$-valued point $x \in  \mathcal{C}_{\textup{cl-fs}} \cap \textup{Sp}(A)$ the following diagram commutes:
$$\xymatrix{H^1_{{\psi}}(\mathcal{D}_{\textup{rig}}^\dagger(\TT_{A}))/H^1_{\psi}(\mathcal{F}(A)) \ar[d]\ar[r]^(.7){\frak{Log}_A}&A\,\widehat{\otimes}\,\LL^\dagger\ar[d]\\
H^1_{\textup{Iw}}(\QQ_p,D_x/F_x)\ar[r]^(.7){\alpha(x)\mathcal{L}_{D_x/F_x}}& \LL^\dagger_{E}}$$
\item[(ii)] There exist an element $\mathcal{Z}_1^{\textup{BK},A} \in H^1_{\textup{Iw}}(\QQ,\TT_A)$ (of Pottharst's analytic Iwasawa cohomology) that restricts for every $E$-vauled point $x=(\psi^\dagger,\lambda(x)) \in  \mathcal{C}_{\textup{cl-fs}} \cap \textup{Sp}(A)$ to $\beta(x)\cdot\kappa_1^{\BK,\psi}$ (where $\kappa_1^{\BK,\psi}$ is given as in the proof of Theorem~\ref{prop:singularkatointerpolation}).
 \item[(iii)] For every $E$-valued point $y=(\phi^\dagger,\lambda(y)) \in  \textup{Sp}(A)$, let $T_y$ denote a $G_\QQ$-stable $\oo_E$-lattice contained in the $E$-vector space $V_y$ and let $\mathcal{Z}_1^{\textup{BK},A}(y) \in H^1_{\textup{Iw}}(\QQ,V_y)$ denote the restriction of $\mathcal{Z}_1^{\textup{BK},A}$ to $y$. There exists an Euler system $\mathbf{c}^{\BK,y}$ for $T_y$ whose initial term $\mathbf{c}^{\BK,y}_\QQ \in H^1(\QQ,T_y\otimes\LL)$ along the cyclotomic tower identifies with $\mathcal{Z}_1^{\textup{BK},A}(y)$ under the natural analytification morphism.
\end{itemize}
\end{conj}

\begin{thm}
\label{thm:firstquestionhasaffrimativeanswerifconjholdstrue}
Let $x_0\in \mathcal{C}_{\textup{cl-fs}}$ be a classical saturated point on the eigencurve and suppose that there exists an admissible affinoid subdomain neighborhood $\textup{Sp}(A)\subset\mathcal{C}$ of $x_0$ that contains no unsaturated points and which verifies Conjecture~\ref{conj:bigColeman}. Then the first part of Question~\ref{question:txinterpolate1} has a positive answer on $A$.
\end{thm}

\begin{proof}
This follows from the existence of a two-variable $p$-adic $L$-function (\cite{bellaicheL}) and a slight modification of the arguments in the proof of Theorem~\ref{thm:univKSvsOchiaiKS} (where the corresponding assertions (over the ordinary locus) to those of Conjecture~\ref{conj:bigColeman} have already been verified by T. Ochiai).
\end{proof}
\begin{rem}
\label{rem:globalcolemantrivialization} The forthcoming work of Hansen we have referred to above also recovers Bella\"ice's two-variable $p$-adic $L$-function $\mathbb{L}_{p,\lambda}(A) \in A\,\widehat{\otimes}\,\LL^\dagger$ from the Beilinson-Kato elements. This together with Theorem~\ref{thm:firstquestionhasaffrimativeanswerifconjholdstrue} shows that the image of the sections $\Xi_{\overline{\rho}}(A)$ of the sheaf of universal $p$-adic $L$-function on $A$ under the Coleman-trivialization lies in the module generated by $\mathbb{L}_{p,\lambda}(A)$. The following two features of this picture is noteworthy:
\begin{itemize}
\item The sheaf $\Xi_{\overline{\rho}}$ is constructed out of an integral element $\kappa_1$. This fact should be useful in applications towards the proof of Main Conjectures over affinoid subdomains of the eigencurve (through the form of main conjecture we deduced in Section~\ref{subsec:weakleo} above over the full deformation space), which (given the technology of the day) would necessarily rely on the Euler-Kolyvagin system machinery that is only available at integral level. We note that one missing ingredient is a well-behaved descent/control mechanism for Selmer complexes in this context and we hope to address this matter in a future work.
\item The modules  $\Xi_{\overline{\rho}}(A)$ (which, by Hansen's work, are closely related to $\mathbb{L}_{p,\lambda}(A)$) readily patch together along the eigencurve.
\end{itemize}
\end{rem}
\section{Core vertices and deforming Kolyvagin systems}
\label{sec:coreverticesandKS}
Our goal in this section is to give a proof of Theorem~\ref{thm:KSmain} (aside from Theorem~\ref{thm:corevertexn} below, which will be proved later in Section~\ref{sec:coreverticesexist}), assuming (\textbf{H1})-(\textbf{H4}), (\textbf{H.Tam}) and (\textbf{H.nA}).

Let $\bar{\frak{n}}=(r,u,v,w)\in(\ZZ_{>0})^4$ and $\bar{\frak{s}}=(r,u,v)\in(\ZZ_{>0})^3$. Assume throughout this section that $\chi(\TT)=\chi(\frak{T})=1$. 
\subsection{Core vertices}
Suppose $T$ is one of $\TT_{\bar{\frak{n}}}$, $\frak{T}_{\bar{\frak{s}}}$ or $\frak{T}/\mm$ and $S$ is the corresponding quotient ring $R_{\bar{\frak{n}}}$, $\frak{R}_{\bar{\frak{s}}}$ or $\mathtt{k}$.

\begin{define}
\label{def:coreverticesforcan}
The integer $n \in \NN_{\bar{\frak{n}}}$ (resp., $n \in \NN_{\bar{\frak{s}}}$) is called a \emph{core vertex} for the Selmer structure $\FFc$ on $T$ if
\begin{itemize}
\item[(i)] $H^1_{\FFc(n)^*}(\QQ,T^*)=0$,
\item[(ii)] $H^1_{\FFc(n)}(\QQ,T)$ is a free $S$-module of rank one.
\end{itemize}
\end{define}

Suppose that the hypotheses (\textbf{H1})-(\textbf{H4}), (\textbf{H.Tam}), and (\textbf{H.nA}) hold true. The following theorem is fundamental in proving the existence of Kolyvagin systems.
\begin{thm}
\label{thm:corevertexn}
 Let $n\in\NN_{\bar{\frak{n}}}$ \textup{(}resp., $n \in \NN_{\bar{\frak{s}}}$$\textup{)}$  be a core vertex for the Selmer structure $\FFc$ on the residual representation $\overline{T}$. Then $n$ is a core vertex for the Selmer structure $\FFc$ on $T$ as well.
\end{thm}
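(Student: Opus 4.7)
The plan is to induct on the length of the quotient ring, reducing the general case to the residual hypothesis. I will work out the case $T = \TT_{\bar{\frak{n}}}$, $S = R_{\bar{\frak{n}}}$ in detail; the cases $T = \frak{T}_{\bar{\frak{s}}}$ and $T = \frak{T}/\mm$ are handled by an identical argument (using \eqref{eqn:gorzeromodm} in the second Gorenstein case to replace standard freeness considerations where necessary). Choose a composition series of ideals $0 = I_N \subsetneq \cdots \subsetneq I_0 = S$ with $I_k/I_{k+1} \cong \mathtt{k}$. Since $T$ is free over $S$, this induces a filtration $T[I_0] \supset T[I_1] \supset \cdots$ with successive quotients isomorphic to $\bar T$. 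I will prove by induction on $k$ that $H^1_{\FFc(n)}(\QQ, T/I_k T)$ is free of rank one over $S/I_k$ and that $H^1_{\FFc(n)^*}(\QQ, (T/I_k T)^*) = 0$. The case $k = 1$ is the residual hypothesis.

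The main input is that the local conditions defining $\FFc(n)$ are \emph{cartesian} with respect to the short exact sequence $0 \to \bar T \to T/I_{k+1}T \to T/I_k T \to 0$: at each $\ell \in \Sigma(\FFc(n))$ the sequence
\[
0 \to H^1_{\FFc(n)}(\QQ_\ell,\bar T) \to H^1_{\FFc(n)}(\QQ_\ell, T/I_{k+1}T) \to H^1_{\FFc(n)}(\QQ_\ell, T/I_k T) \to 0
\]
is exact. At $\ell = p$ this is immediate from Proposition~\ref{prop:loccondatp}, since both outer local cohomology modules coincide with the full $H^1$. At bad primes $\ell \in \Sigma(\FFc) \setminus \{p,\infty\}$, one uses \eqref{eqn:caneqfin} to identify $\FFc$ with the finite local condition and then invokes the cartesian property of $H^1_f$, which is exactly what (\textbf{H.Tam}) is designed to ensure (via \cite[Lemma 1.3.8]{r00}). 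At $\ell \mid n$, the cartesian property of the transverse condition is a direct consequence of the functorial splitting recorded in Lemma~\ref{lem:transverseproperties}.

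Combining these local exact sequences with the long exact global cohomology sequence yields a four-term exact sequence
\[
0 \to H^1_{\FFc(n)}(\QQ,\bar T) \to H^1_{\FFc(n)}(\QQ, T/I_{k+1}T) \to H^1_{\FFc(n)}(\QQ, T/I_k T) \to C \to 0,
\]
where the obstruction $C$ is identified, via Poitou--Tate global duality (the Wiles--Greenberg formula, using that (\textbf{H3}) kills the relevant $H^0$ terms and (\textbf{H.nA}) controls $H^2$ at $p$), with a subquotient of $H^1_{\FFc(n)^*}(\QQ,(T/I_k T)^*)$. The latter vanishes by the inductive hypothesis, so $C = 0$. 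Nakayama's lemma, applied to the resulting short exact sequence together with the freeness of the rightmost term over $S/I_k$, forces $H^1_{\FFc(n)}(\QQ, T/I_{k+1}T)$ to be free of rank one over $S/I_{k+1}$. The vanishing of the lifted dual Selmer group is then read off from the dual short exact sequence $0 \to (T/I_k T)^* \to (T/I_{k+1}T)^* \to \bar T^* \to 0$ together with a parallel cartesian argument, completing the induction.

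The main obstacle I anticipate is verifying the cartesian property at bad primes cleanly enough under (\textbf{H.Tam}) to accommodate the transverse modifications at primes dividing $n$, and, in the $\frak{R}$-case, handling the induction without the benefit of regularity of the coefficient ring: there the filtration argument must respect the Gorenstein socle identification \eqref{eqn:gorzeromodm}, and the local structure of $H^1_{\FFc}(\QQ_\ell,\frak T_{r,u,v})$ must be compared to that of $H^1_{\FFc}(\QQ_\ell,\bar T)$ via Proposition~\ref{prop:transversefurtherprop}(iii) rather than by direct freeness lifts.
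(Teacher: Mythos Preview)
Your inductive strategy is reasonable in outline, but it diverges from the paper's argument and has two genuine gaps.

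The paper does not induct along a composition series. It proves the two halves of the core-vertex condition in one shot for $\TT_{\bar{\frak n}}$. The cartesian properties of $\FFc(n)$ established in \S\ref{sec:coreverticesexist} yield $H^1_{\FFc(n)}(\QQ,\bar T)\cong H^1_{\FFc(n)}(\QQ,\TT_{\bar{\frak n}})[\mathcal M]$ (Proposition~\ref{prop:upperbound}); Pontryagin-dualizing and applying Nakayama shows the dual is a cyclic $R_{\bar{\frak n}}$-module. Independently, the global Euler-characteristic formula \cite[Theorem 4.1.13]{mr02} gives $\textup{length}_\oo H^1_{\FFc(n)}(\QQ,\TT_{\bar{\frak n}}) - \textup{length}_\oo H^1_{\FFc(n)^*}(\QQ,\TT_{\bar{\frak n}}^*)=\textup{length}_\oo R_{\bar{\frak n}}$ (Proposition~\ref{prop:lowerbound}). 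The two together force the Pontryagin dual to be free of rank one; Gorenstein duality then transfers freeness to the Selmer group itself, and the dual Selmer group vanishes by the length identity (Corollary~\ref{cor:corevertexn}). No step-by-step lifting, and the Gorenstein hypothesis is used only once, at the end.

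In your approach, two points need repair. First, the assertion that $C$ is ``identified, via the Wiles--Greenberg formula, with a subquotient of $H^1_{\FFc(n)^*}(\QQ,(T/I_kT)^*)$'' is not correct: that formula is a length identity, not a structural identification of a cokernel. What you actually need is to run your dual-side argument \emph{first} (it only uses left-exactness, which you have), obtain $H^1_{\FFc(n)^*}(\QQ,(T/I_{k+1}T)^*)=0$, feed that into the Euler-characteristic formula for $T/I_{k+1}T$ to get $\textup{length}(M)=\textup{length}(S/I_{k+1})$, and \emph{then} conclude $C=0$ by comparing with the upper bound from left-exactness. Second, your Nakayama step is incomplete: from a short exact sequence $0\to\mathtt k\to M\to S/I_k\to 0$ of $S/I_{k+1}$-modules alone one cannot deduce $M\cong S/I_{k+1}$, since the extension could split. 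The missing observation is that the inclusion $\bar T\hookrightarrow T/I_{k+1}T$ is multiplication by a generator $x$ of $I_k/I_{k+1}$, so the image of $H^1_{\FFc(n)}(\QQ,\bar T)$ in $M$ lands in $xM\subset\mathfrak m M$; only then does $M/\mathfrak m M$ become one-dimensional and Nakayama applies. The paper's route through the Pontryagin dual avoids both issues.
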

Theorem~\ref{thm:corevertexn} is proved in \S\ref{sec:coreverticesexist}. In this section we show how it may be used to prove the existence of Kolyvagin systems for $\TT$ and $\frak{T}$, given as in Definitions~\ref{selmer sheaf} and \ref{def:KSbigrings}.

\begin{thm}
\label{thm:KSoverartinianringS}
The $S$-module  $\textbf{\textup{KS}}(T,\FFc,\PP)$ is free of rank one.
\end{thm}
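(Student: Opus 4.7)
The plan is to adapt the Mazur--Rubin argument from \cite[\S4-5]{mr02}, which handles principal artinian coefficient rings, and the extension in \cite{kbb} to the ring $\LL/(\varpi^r,(\gamma-1)^v)$. The single new ingredient is Theorem~\ref{thm:corevertexn}: combined with the existence of core vertices for the residual Selmer structure $\FFc$ on $\bar{T}$ (provided by \cite[Theorem 4.1.3]{mr02} under $(\textbf{H1})$--$(\textbf{H4})$ and $\chi(\bar{T})=1$), it guarantees a plentiful supply of core vertices $n_0\in\NN$ for the Selmer structure $\FFc$ on $T$ itself. In particular, the subgraph of $\XX$ spanned by core vertices is nonempty, cofinal with respect to divisibility, and connected in the strong sense needed below.

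First I would fix a core vertex $n_0\in\NN$. By Definition~\ref{def:coreverticesforcan}, $H^1_{\FFc(n_0)}(\QQ,T)$ is free of rank one over $S$, hence so is the stalk $\mathcal{H}(n_0)=H^1_{\FFc(n_0)}(\QQ,T)\otimes G_{n_0}$. Consider the evaluation map
$$\mathrm{ev}_{n_0}:\mathbf{KS}(T,\FFc,\PP)\longrightarrow \mathcal{H}(n_0),\qquad \pmb{\kappa}\mapsto \kappa_{n_0}.$$
The claim is that $\mathrm{ev}_{n_0}$ is an isomorphism of $S$-modules, whence $\mathbf{KS}(T,\FFc,\PP)$ is free of rank one.

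Injectivity follows by propagating vanishing along edges in $\XX$. If $\kappa_{n_0}=0$, then at every edge $\{n_0,n_0\ell\}$ the Kolyvagin relation $\phi_\ell^{fs}(\mathrm{loc}_\ell\,\kappa_{n_0})=\mathrm{loc}_\ell^s\,\kappa_{n_0\ell}$ combined with the vanishing $H^1_{\FFc(n_0)^*}(\QQ,T^*)=0$ and the splitting of Lemma~\ref{lem:transverseproperties} forces $\kappa_{n_0\ell}=0$; iterating along a path in the core subgraph forces $\kappa_n=0$ for every vertex $n$. For surjectivity, given $c\in\mathcal{H}(n_0)$ one constructs a global section $\{\kappa_n\}$ of $\mathcal{H}$ with $\kappa_{n_0}=c$ by propagating $c$ one prime at a time. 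Compatibility along two different paths between the same pair of core vertices reduces, via the connectedness of the core subgraph, to the vanishing of a suitable obstruction class, exactly as in the surjectivity portion of \cite[Theorem 5.2.10]{mr02}.

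The hard part is really Theorem~\ref{thm:corevertexn} itself: once that result, together with \cite[Theorem 4.1.3]{mr02} at the residual level, gives us enough core vertices in $\NN$ whose Selmer groups remain free of rank one and dual Selmer groups remain zero, the rest of the argument is a direct translation of \cite[\S4-5]{mr02} and \cite[\S3]{kbb}. All the local structural inputs required for that translation are already in place: Proposition~\ref{prop:transversefurtherprop} supplies the freeness of the three local cohomology groups at Kolyvagin primes and the finite-singular isomorphism $\phi_\ell^{fs}$; Proposition~\ref{prop:loccondatp} provides $H^1_{\FFc}(\QQ_p,T)=H^1(\QQ_p,T)$ under $(\textbf{H.nA})$; and $(\textbf{H.Tam})$ ensures that the unramified local conditions are cartesian in the sense needed to compare core vertices across quotients of $\TT$ (resp.\ $\frak{T}$).
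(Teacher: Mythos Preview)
Your proposal is correct and follows essentially the same route as the paper: fix a core vertex $n_0$ (supplied by Theorem~\ref{thm:corevertexn} lifted from the residual level), prove that the evaluation map $\pmb{\kappa}\mapsto\kappa_{n_0}$ is both surjective and injective by adapting the arguments of \cite[\S4--5]{mr02} and \cite[\S3]{kbb}, with connectedness of the core subgraph as the essential structural input. One small point worth making explicit is that the paper defines the edges of the core subgraph $\XX^0$ in terms of the residual representation $\bar{T}$ rather than $T$, precisely so that the connectedness result \cite[Theorem~4.3.12]{mr02} applies directly without modification.
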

Theorem~\ref{thm:KSoverartinianringS} is proved in two steps. As the first step, we prove:
\begin{thm}
\label{thm:KSsurjective}
Suppose $n\in\NN$ is any core vertex for the Selmer structure $\FFc$ on $T$. Then the natural map
$$\textbf{\textup{KS}}(T,\FFc,\PP) \lra H^1_{\FFc(n)}(\QQ,T)\otimes G_n$$
$($given by $\pmb{\kappa}\mapsto \kappa_n$$)$ is surjective.
\end{thm}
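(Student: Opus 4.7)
The plan is to construct, for each prescribed class $c\in H^1_{\FFc(n)}(\QQ,T)\otimes G_n$, a Kolyvagin system $\pmb{\kappa}\in\textbf{KS}(T,\FFc,\PP)$ with $\kappa_n=c$. The construction proceeds in two phases: first, propagate $c$ along the subgraph of $\XX(\PP)$ consisting of core vertices, where the edge relations uniquely force the values; second, extend to arbitrary vertices via auxiliary core vertices furnished by Chebotarev's density theorem. Once well-definedness is checked, the assignment $c\mapsto\pmb{\kappa}$ is an $S$-linear section of the evaluation-at-$n$ map, yielding the surjectivity.

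For the first phase, suppose $n$ and $n\ell$ are adjacent core vertices. By Proposition~\ref{prop:transversefurtherprop} the modules $H^1_f(\QQ_\ell,T)$ and $H^1_s(\QQ_\ell,T)$ are free of rank one over $S$, and the Kolyvagin edge relation forces
$$\textup{loc}_\ell^s(\kappa_{n\ell})=\phi_\ell^{fs}\bigl(\textup{loc}_\ell(c)\bigr).$$
Since $n\ell$ is a core vertex, $H^1_{\FFc(n\ell)}(\QQ,T)$ is also free of rank one. A Poitou--Tate comparison of the Selmer structures $\FFc(n)$ and $\FFc(n\ell)$, which differ only at $\ell$ by swapping the finite and transverse conditions, combined with the vanishing of the dual Selmer groups at both core vertices, forces $\textup{loc}_\ell^s\colon H^1_{\FFc(n\ell)}(\QQ,T)\to H^1_s(\QQ_\ell,T)$ to be an isomorphism of rank-one free $S$-modules. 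Hence $\kappa_{n\ell}$ is uniquely determined. Iterating along any path $n=n_0,n_1,\dots,n_k$ in the core-vertex subgraph produces uniquely determined values at each $n_i$.

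For the second phase, given an arbitrary $m\in\NN$, Chebotarev's density theorem applied to the extension $L_{\bar{\frak{i}}}/\QQ$ (or $L_{\bar{\frak{j}}}/\QQ$ in the $\frak{T}$ case), together with Theorem~\ref{thm:corevertexn} which lifts core vertices for $\bar{T}$ to $T$, produces a squarefree $d\in\NN$ coprime to $nm$ such that $nmd$ is a core vertex and such that the segment from $n$ to $nmd$ can be chosen inside the core-vertex subgraph of $\XX(\PP)$ by inserting the primes of $md$ one at a time and invoking Chebotarev at each step to guarantee the core-vertex property is preserved. The first phase defines $\kappa_{nmd}$. The class $\kappa_m$ is then obtained by descending from $\kappa_{nmd}$ along the edges that strip off the primes dividing $nd$ one at a time, using the transverse-to-singular splitting of Lemma~\ref{lem:transverseproperties} to invert each edge map.

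The main obstacle is well-definedness: two different choices of $d$ and of the path of core vertices used to reach $nmd$ must yield the same $\kappa_m$. This is handled by a cocycle argument on the core-vertex subgraph, whose key input is condition (i) of Definition~\ref{def:coreverticesforcan}, namely $H^1_{\FFc(n')^*}(\QQ,T^*)=0$ at every core vertex $n'$. Via the global duality sequence this vanishing rigidifies the Selmer sheaf on the core-vertex subgraph so that the monodromy around any closed loop of core-vertex edges is the identity. Consequently, two constructions of $\kappa_m$ differ by a class whose $\ell$-localizations vanish for every $\ell$ and which therefore belongs to a Selmer group whose dual is trivial, forcing it to be zero. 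This completes the verification and proves surjectivity.
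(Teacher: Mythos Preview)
Your overall plan is the same as the paper's (which defers to Howard's argument in \cite{mr02}, as adapted to general artinian coefficients in \cite{kbb}): propagate along a subgraph of core vertices, then extend to arbitrary vertices via Chebotarev. However, Phase~1 contains a genuine gap. You assert that for \emph{any} pair of adjacent core vertices $n,n\ell$, the vanishing of both dual Selmer groups forces
\[
\textup{loc}_\ell^s\colon H^1_{\FFc(n\ell)}(\QQ,T)\longrightarrow H^1_s(\QQ_\ell,T)
\]
to be an isomorphism. Over a general artinian ring $S$ this does not follow. The Poitou--Tate sequences for the pairs $(\mathcal{F}_\ell(n),\FFc(n))$ and $(\mathcal{F}_\ell(n),\FFc(n\ell))$ show only that $\textup{loc}_\ell$ and $\textup{loc}_\ell^s$ have the \emph{same} cokernel, namely $H^1_{(\mathcal{F}_\ell(n))^*}(\QQ,T^*)^\vee$, and hence equal-length kernel and cokernel; but a map of free rank-one $S$-modules with this property need not be an isomorphism (take $S=\texttt{k}[x]/x^2$ and multiplication by $x$). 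The correct fix, exactly as in the subgraph $\XX^0$ defined just after the statement, is to impose the \emph{edge condition}: join $n$ and $n\ell$ only when the residual localization $H^1_{\FFc(n)}(\QQ,\bar T)\to H^1_f(\QQ_\ell,\bar T)$ is non-zero. Since both sides are one-dimensional over $\texttt{k}$, this map is then an isomorphism, and Nakayama lifts surjectivity to the level of $T$, forcing both edge maps to be isomorphisms of free rank-one $S$-modules. One then needs the connectedness of $\XX^0$ (Lemma~\ref{lem:x0connected}), which is proved entirely at the level of $\bar T$.

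Your Phase~2 well-definedness argument is also too loose. The claim that two constructions of $\kappa_m$ differ by a class whose localizations vanish at every prime does not follow from what you wrote, and even if it did, such a class need not be zero. In the actual argument one first shows that restriction to $\XX^0$ identifies global sections with $\mathcal{H}(n)$ for any core vertex $n$ (using the edge-isomorphisms and connectedness above), and then treats the extension to an arbitrary $m$ by choosing the auxiliary primes via Chebotarev so that each descent step from the core vertex down to $m$ is \emph{forced} by the sheaf relations; independence of choices is checked by comparing two such descents through a common refinement, again inside $\XX^0$.
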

\begin{proof}The arguments utilized in the proof of \cite[Theorem 3.11]{kbb} may be modified without much difficulty in order to prove Theorem~\ref{thm:KSsurjective}. The main point is that we have Theorem~\ref{thm:corevertexn} here in place of \cite[Theorem 2.27]{kbb}.
\end{proof}
Define a subgraph $\XX^{0}=\XX^{0}(\PP)$ of $\XX$ whose vertices are the core vertices of $\XX$ and whose edges are defined as follows: We join $n$ and $n\ell$ by an edge in $\XX^{0}$ if and only if the localization map
$$H^1_{\FFc(n)}(\QQ,\overline{T}) \lra H^1_{f}(\QQ_{\ell},\overline{T})$$
is \emph{non-zero}. We define the sheaf $\mathcal{H}^0$ on $\XX^{0}$ as the restriction of the Selmer sheaf $\mathcal{H}$ to $\XX^{0}$.

\begin{lemma}
\label{lem:x0connected}
The graph $\XX^{0}$ is connected.
\end{lemma}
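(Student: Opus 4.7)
The plan is to adapt the connectedness argument of Mazur and Rubin from \cite[Corollary 4.1.9]{mr02}. Since the edges of $\XX^0$ are defined purely in terms of the residual representation $\bar{T}$, and the core-vertex condition on $\bar{T}$ coincides with the hypotheses considered in \cite{mr02} under our (\textbf{H1})--(\textbf{H4}), the argument essentially reduces to the residual setting they treat. I outline the three main steps below.

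First, I would establish the following Chebotarev-type lemma: for any core vertex $n\in\NN$ and any non-zero class $c\in H^1_{\FFc(n)}(\QQ,\bar{T})$, there are infinitely many Kolyvagin primes $\ell\in\PP$ with $\textup{loc}_\ell(c)\neq 0$ in $H^1_f(\QQ_\ell,\bar{T})$. The argument applies Chebotarev's density theorem to the Galois extension of $\QQ(\bar{T},\pmb{\mu}_p)$ cut out by $c$; hypotheses (\textbf{H1}) and (\textbf{H2}) guarantee this extension is non-trivial, and that Frobenius elements conjugate to $\tau$ can be chosen on which the restriction of $c$ is non-zero. The dual version of the same argument, applied to $\bar{T}^*$, produces primes detecting classes in $H^1_{\FFc(n)^*}(\QQ,\bar{T}^*)$.

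Second, I would show that whenever $n$ is a core vertex and $\ell$ is a Kolyvagin prime chosen as above, the level $n\ell$ is again a core vertex, so that the edge $\{n,n\ell\}$ lies in $\XX^0$. This is a Poitou--Tate global duality calculation: the exact sequence comparing $H^1_{\FFc(n)}(\QQ,\bar{T})$ with $H^1_{\FFc(n\ell)}(\QQ,\bar{T})$ obtained by swapping the finite local condition at $\ell$ for the transverse one shows that the non-vanishing of $\textup{loc}_\ell(c)$ balances the change of local condition, forcing $H^1_{\FFc(n\ell)}(\QQ,\bar{T})$ to remain one-dimensional over $\texttt{k}$ and $H^1_{\FFc(n\ell)^*}(\QQ,\bar{T}^*)$ to remain zero. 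The Cartier-dual version of the same computation allows us to also traverse edges in the reverse direction, i.e.\ to ``remove'' primes from the level while remaining in $\XX^0$.

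Finally, given two core vertices $n,n'\in\NN$, I would build a path between them by induction on the number of prime factors of $\mathrm{lcm}(n,n')/\gcd(n,n')$. At each step the Chebotarev lemma (on whichever of $\bar{T}$ or $\bar{T}^*$ is relevant) locates a prime $\ell$ along which the next edge in $\XX^0$ can be taken toward the target. The principal obstacle I anticipate is arranging the Chebotarev conditions on $\bar{T}$ and on $\bar{T}^*$ so that they can be met simultaneously while keeping all intermediate vertices in the set of core vertices; this is precisely where hypothesis (\textbf{H4}) enters, ruling out the degenerate situation in which the splitting fields of $\bar{T}$ and $\bar{T}^*$ would be forced to coincide in a way that obstructs the simultaneous choice of Frobenius.
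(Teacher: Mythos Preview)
Your approach is essentially the same as the paper's: both observe that the definition of $\XX^0$ depends only on the residual representation $\bar{T}$, so the connectedness statement reduces to the one proved by Mazur and Rubin. The paper's proof is a one-line citation to \cite[Theorem 4.3.12]{mr02} (not Corollary~4.1.9, which concerns the \emph{existence} of core vertices rather than connectedness; you may want to adjust your reference). Your three-step sketch of the Chebotarev/global-duality argument is a reasonable summary of how the cited result is proved in \cite{mr02}, but the paper does not reproduce any of it.
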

\begin{proof}
The edges of $\XX^0$ are defined in terms of $\overline{T}$ (and not $T$ itself) so the arguments that go into the proof of \cite[Theorem 4.3.12]{mr02} apply.
\end{proof}

The following Theorem, combined with Theorem~\ref{thm:KSsurjective} completes the proof of Theorem~\ref{thm:KSoverartinianringS}.
\begin{thm}
\label{thm:KSinjective}
Suppose $n\in\NN$ is any core vertex for the Selmer structure $\FFc$ on $T$. Then the natural map
$$\textbf{\textup{KS}}(T,\FFc,\PP) \lra H^1_{\FFc(n)}(\QQ,T)\otimes G_n$$
 is is injective.
\end{thm}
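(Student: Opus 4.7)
The plan is to propagate the vanishing of $\kappa_n$ to every vertex of $\NN$, using the sheaf axiom together with the core vertex structure. Suppose $\pmb{\kappa}\in\textbf{KS}(T,\FFc,\PP)$ satisfies $\kappa_n=0$ at the given core vertex $n$. I will first show $\kappa_m=0$ at every other core vertex $m$ by propagating along $\XX^0$, and then extend to the non-core vertices.

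For the first stage, fix an edge $\{m,m\ell\}$ of $\XX^0$ and suppose $\kappa_m=0$. By the sheaf axiom,
$$\phi_\ell^{fs}\bigl(\textup{loc}_\ell(\kappa_m)\bigr)\,=\,\textup{loc}_\ell^s(\kappa_{m\ell}),$$
so the right-hand side is zero. Lemma~\ref{lem:transverseproperties} identifies $H^1_{\textup{tr}}(\QQ_\ell,T)$ isomorphically with $H^1_s(\QQ_\ell,T)$, and since $\FFc(m\ell)$ imposes the transverse condition at $\ell$, we deduce $\textup{loc}_\ell(\kappa_{m\ell})=0$. Now invoke the core vertex property at $m\ell$: by Proposition~\ref{prop:transversefurtherprop}, both $H^1_{\FFc(m\ell)}(\QQ,T)$ and $H^1_s(\QQ_\ell,T)$ are free of rank one over $S$, and the defining property of an $\XX^0$-edge, combined with the analogous nonvanishing on the singular side that holds between adjacent core vertices (cf.\ the argument of \cite[\S4]{mr02}), forces the residual localization $H^1_{\FFc(m\ell)}(\QQ,\bar T)\to H^1_s(\QQ_\ell,\bar T)$ to be a nonzero map of one-dimensional $\texttt{k}$-vector spaces, hence an isomorphism. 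By Nakayama's lemma the lift is also an isomorphism, in particular injective, so $\kappa_{m\ell}=0$. The reverse implication (from $\kappa_{m\ell}=0$ to $\kappa_m=0$) is entirely symmetric, using that $\phi_\ell^{fs}$ is an isomorphism and the defining nonvanishing of the residual map $H^1_{\FFc(m)}(\QQ,\bar T)\to H^1_f(\QQ_\ell,\bar T)$ at an $\XX^0$-edge. Since $\XX^0$ is connected by Lemma~\ref{lem:x0connected}, iterating yields $\kappa_m=0$ at every core vertex.

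For the second stage, I will appeal to the Chebotarev-type input underlying the proof of Theorem~\ref{thm:corevertexn}: for any $m\in\NN$ and any nonzero class $c\in H^1_{\FFc(m)}(\QQ,T)$ there exists a prime $\ell$ with $m\ell\in\NN$, $m\ell$ a core vertex (for $\bar T$, hence for $T$ by Theorem~\ref{thm:corevertexn}), and $\textup{loc}_\ell(c)\neq 0$ in $H^1_f(\QQ_\ell,T)$. Applying this with $c=\kappa_m$: if $\kappa_m\neq 0$, one obtains a core vertex $m\ell$ with $\textup{loc}_\ell(\kappa_m)\neq 0$, yet the sheaf axiom forces $\phi_\ell^{fs}(\textup{loc}_\ell(\kappa_m))=\textup{loc}_\ell^s(\kappa_{m\ell})=0$ by the first stage, contradicting the injectivity of $\phi_\ell^{fs}$. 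Hence $\kappa_m=0$, completing the proof.

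The principal technical obstacle is the Chebotarev argument in the second stage, which must be carried out over the artinian rings $R_{\bar{\frak{n}}}$ and $\frak{R}_{\bar{\frak{s}}}$ rather than a DVR as in \cite{mr02}. The hypotheses (\textbf{H1})--(\textbf{H4}) and (\textbf{H.Tam}) control the residual Galois image, and the structural fact (\ref{eqn:gorzeromodm}) in the Gorenstein case plays the role that made the DVR case in \cite{mr02} straightforward; together with the ample supply of core vertices provided by Theorem~\ref{thm:corevertexn}, this furnishes the primes $\ell$ required to separate classes.
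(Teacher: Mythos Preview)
Your first stage is exactly what the paper's proof invokes: the paper defers to \cite[Theorem~3.12]{kbb}, citing as the essential input the connectivity of $\XX^0$ (Lemma~\ref{lem:x0connected}), and your propagation of vanishing across $\XX^0$ via the edge isomorphisms is precisely that argument. So on the core subgraph you are aligned with the paper.

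The second stage, however, has a gap. You claim that for any $m\in\NN$ and any nonzero $c\in H^1_{\FFc(m)}(\QQ,T)$ one can find $\ell$ with $m\ell$ a \emph{core vertex} and $\textup{loc}_\ell(c)\neq 0$. This is not true in general: adjoining a single Kolyvagin prime lowers $\dim_{\texttt{k}}H^1_{\FFc(m)^*}(\QQ,\bar T^*)$ by at most one, so if this dimension exceeds one at $m$ then no single $\ell$ makes $m\ell$ a core vertex. The repair is to argue by descending induction on $\lambda(m):=\dim_{\texttt{k}}H^1_{\FFc(m)^*}(\QQ,\bar T^*)$. If $\kappa_m\neq 0$, the Chebotarev input of \cite[\S3.6]{mr02} lets you choose $\ell$ with $\textup{loc}_\ell(\kappa_m)\neq 0$ \emph{and} $\textup{loc}_\ell$ nonzero on some fixed nonzero class in $H^1_{\FFc(m)^*}(\QQ,\bar T^*)$; the latter forces $\lambda(m\ell)<\lambda(m)$, while the sheaf axiom and the injectivity of $\phi_\ell^{fs}$ give $\kappa_{m\ell}\neq 0$. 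Induction then furnishes the contradiction at a core vertex. With this amendment your argument matches the one the paper cites from \cite{kbb}.
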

\begin{proof}
One may argue as in the proof of~\cite[Theorem 3.12]{kbb} in order to prove this assertion. The essential input is the fact that the graph $\XX^0$ is connected (Lemma~\ref{lem:x0connected}).
\end{proof}
\begin{lemma}
\label{lem:j}
Fix a quadruple $\bar{\frak{n}}$ 
as above. For any $\bar{\frak{i}} \succ \bar{\frak{n}}$,
the natural restriction map
$$\mathbf{KS}(\TT_{\bar{\frak{n}}},\FFc, \PP_{\bar{\frak{n}}}) \lra \mathbf{KS}(\TT_{\bar{\frak{n}}},\FFc, \PP_{\bar{\frak{i}}})$$
 is an isomorphism.
\end{lemma}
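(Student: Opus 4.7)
The plan is to exploit the fact that the two sides of the restriction map are both free of rank one over $R_{\bar{\frak{n}}}$, thanks to Theorem~\ref{thm:KSoverartinianringS}, so it suffices to reduce the question to a single evaluation at a well-chosen core vertex. The first observation is that since $\bar{\frak{n}}\prec\bar{\frak{i}}$ we have $H_{\bar{\frak{i}}}\subset H_{\bar{\frak{n}}}$, hence $L_{\bar{\frak{n}}}\subset L_{\bar{\frak{i}}}$ and $\PP_{\bar{\frak{i}}}\subset\PP_{\bar{\frak{n}}}$, so that $\NN_{\bar{\frak{i}}}\subset\NN_{\bar{\frak{n}}}$. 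The restriction map in the statement is then simply the map that, given a global section of the Selmer sheaf on $\XX(\PP_{\bar{\frak{n}}})$, remembers only the components indexed by $\NN_{\bar{\frak{i}}}$, and forgets the compatibilities along edges of $\XX(\PP_{\bar{\frak{n}}})$ that are not edges of $\XX(\PP_{\bar{\frak{i}}})$.

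The second input is the existence of a core vertex $n_0\in\NN_{\bar{\frak{i}}}$ for $\FFc$ on $\bar{T}$: core vertices are produced in abundance (cf.\ the argument behind Theorem~\ref{thm:corevertexn}, which ultimately rests on the Chebotarev density theorem applied in $L_{\bar{\frak{i}}}$), so we may choose such an $n_0$ inside the smaller set $\NN_{\bar{\frak{i}}}$. By Theorem~\ref{thm:corevertexn}, $n_0$ is then a core vertex for $\FFc$ on $\TT_{\bar{\frak{n}}}$ as well, both when viewed as an element of $\NN_{\bar{\frak{n}}}$ and when viewed as an element of $\NN_{\bar{\frak{i}}}$ (the defining Selmer groups depend only on the arithmetic of the prime $n_0$ and not on the ambient index set).

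For injectivity, suppose $\pmb{\kappa}\in\mathbf{KS}(\TT_{\bar{\frak{n}}},\FFc,\PP_{\bar{\frak{n}}})$ restricts to zero. Then in particular $\kappa_{n_0}=0$, and Theorem~\ref{thm:KSinjective} (applied with the core vertex $n_0$ in the larger Selmer sheaf on $\XX(\PP_{\bar{\frak{n}}})$) forces $\pmb{\kappa}=0$. For surjectivity, let $\pmb{\kappa}'\in\mathbf{KS}(\TT_{\bar{\frak{n}}},\FFc,\PP_{\bar{\frak{i}}})$ and look at the class $\kappa'_{n_0}\in H^1_{\FFc(n_0)}(\QQ,\TT_{\bar{\frak{n}}})\otimes G_{n_0}$. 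By Theorem~\ref{thm:KSsurjective}, applied to the Selmer sheaf on $\XX(\PP_{\bar{\frak{n}}})$, this class is the value at $n_0$ of some Kolyvagin system $\pmb{\kappa}\in\mathbf{KS}(\TT_{\bar{\frak{n}}},\FFc,\PP_{\bar{\frak{n}}})$. Both $\pmb{\kappa}'$ and the restriction of $\pmb{\kappa}$ now lie in $\mathbf{KS}(\TT_{\bar{\frak{n}}},\FFc,\PP_{\bar{\frak{i}}})$ and agree at the core vertex $n_0$, so Theorem~\ref{thm:KSinjective} applied this time in the smaller graph $\XX(\PP_{\bar{\frak{i}}})$ shows they are equal, proving surjectivity.

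The step I expect to require the most care is the choice of a core vertex that simultaneously lies in $\NN_{\bar{\frak{i}}}$, i.e., the verification that restricting from Frobenius conjugacy classes in $\Gal(L_{\bar{\frak{n}}}/\QQ)$ to those in the bigger field $\Gal(L_{\bar{\frak{i}}}/\QQ)$ does not shrink the supply of core vertices below what is needed. Once one knows this, and once one sees that the defining properties of ``core vertex'' at $n_0$ are intrinsic to $n_0$ and not to the ambient collection $\PP$, the two applications of Theorems~\ref{thm:KSsurjective} and \ref{thm:KSinjective} (in the two different graphs $\XX(\PP_{\bar{\frak{n}}})$ and $\XX(\PP_{\bar{\frak{i}}})$) slot in mechanically.
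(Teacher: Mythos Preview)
Your proof is correct and follows essentially the same approach as the paper: choose a core vertex $n_0\in\NN_{\bar{\frak{i}}}$ (whose existence the paper attributes to \cite[Corollary~4.1.9]{mr02} together with Theorem~\ref{thm:corevertexn}), then use Theorems~\ref{thm:KSsurjective} and~\ref{thm:KSinjective} to identify both sides with $H^1_{\FFc(n_0)}(\QQ,\TT_{\bar{\frak{n}}})\otimes G_{n_0}$ compatibly with the restriction map. The paper packages this as a single commutative triangle of isomorphisms, while you unpack it into separate injectivity and surjectivity arguments, but the content is identical.
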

\begin{proof}
Theorems~\ref{thm:KSsurjective} and \ref{thm:KSinjective} applied with a core vertex $n \in \NN_{\bar{\frak{i}}}$, we have isomorphisms
$$\mathbf{KS}(\TT_{\bar{\frak{n}}},\FFc, \PP_{\bar{\frak{n}}}) \stackrel{\sim}{\lra} H^1_{\FFc(n)}(\QQ,\TT_{\bar{\frak{n}}}) \stackrel{\sim}{\longleftarrow}\mathbf{KS}(\TT_{\bar{\frak{n}}}, \FFc,\PP_{\bar{\frak{i}}})$$
 compatible with the restriction map
 $$\mathbf{KS}(\TT_{\bar{\frak{n}}}, \FFc,\PP_{\bar{\frak{n}}}) \lra \mathbf{KS}(\TT_{\bar{\frak{n}}},\FFc, \PP_{\bar{\frak{i}}}).$$
 Note that $n\in \NN_{\bar{\frak{i}}}$ as above exists by  \cite[Corollary 4.1.9]{mr02} and Theorem~\ref{thm:corevertexn} above.
 \end{proof}

\begin{lemma}
\label{lem:surj0}
Let $\bar{\frak{n}}^{\prime}\prec \bar{\frak{n}}$ and let $n \in \NN_{\bar{\frak{n}}}$ be a core vertex. The map
$$H^1_{\FFc(n)}(\QQ,\TT_{\bar{\frak{n}}})\lra  H^1_{\FFc(n)}(\QQ,\TT_{\bar{\frak{n}}^\prime})$$
 is surjective.
\end{lemma}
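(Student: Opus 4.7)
The plan is to reduce the surjectivity statement to a Nakayama-type computation, exploiting the core vertex hypothesis to force both Selmer groups to be cyclic and to have compatible one-dimensional reductions.

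First I would verify that $n$ is actually a core vertex for both $\TT_{\bar{\frak{n}}}$ and $\TT_{\bar{\frak{n}}'}$. Since $\bar{\frak{n}}'\prec\bar{\frak{n}}$ implies $L_{\bar{\frak{n}}'}\subset L_{\bar{\frak{n}}}$, the Chebotarev condition defining $\PP_{\bar{\frak{n}}}$ is more restrictive than the one defining $\PP_{\bar{\frak{n}}'}$, whence $\PP_{\bar{\frak{n}}}\subset \PP_{\bar{\frak{n}}'}$ and $\NN_{\bar{\frak{n}}}\subset\NN_{\bar{\frak{n}}'}$. In particular $n\in\NN_{\bar{\frak{n}}'}$ and the modified Selmer structure $\FFc(n)$ is defined on both modules. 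Appealing to Theorem~\ref{thm:corevertexn}, the core vertex property descends from $\bar T$ to each of $\TT_{\bar{\frak{n}}}$ and $\TT_{\bar{\frak{n}}'}$, so both $H^1_{\FFc(n)}(\QQ,\TT_{\bar{\frak{n}}})$ and $H^1_{\FFc(n)}(\QQ,\TT_{\bar{\frak{n}}'})$ are free of rank one over $R_{\bar{\frak{n}}}$ and $R_{\bar{\frak{n}}'}$ respectively.

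Next, fix a generator $\kappa$ of the $R_{\bar{\frak{n}}}$-module $H^1_{\FFc(n)}(\QQ,\TT_{\bar{\frak{n}}})$ and let $\kappa'\in H^1_{\FFc(n)}(\QQ,\TT_{\bar{\frak{n}}'})$ denote its image under the natural map. Since the composite $R_{\bar{\frak{n}}}\cdot \kappa \lra H^1_{\FFc(n)}(\QQ,\TT_{\bar{\frak{n}}'})$ factors through $R_{\bar{\frak{n}}'}\cdot\kappa'$, and the ring surjection $R_{\bar{\frak{n}}}\twoheadrightarrow R_{\bar{\frak{n}}'}$ is surjective, it suffices to show that $\kappa'$ generates $H^1_{\FFc(n)}(\QQ,\TT_{\bar{\frak{n}}'})$ as an $R_{\bar{\frak{n}}'}$-module. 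By Nakayama's lemma applied to the finitely generated $R_{\bar{\frak{n}}'}$-module $H^1_{\FFc(n)}(\QQ,\TT_{\bar{\frak{n}}'})$, this in turn reduces to showing that the image of $\kappa'$ in the residue quotient $H^1_{\FFc(n)}(\QQ,\TT_{\bar{\frak{n}}'})\otimes_{R_{\bar{\frak{n}}'}}\texttt{k}$ is non-zero.

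By functoriality, the image of $\kappa'$ in $H^1_{\FFc(n)}(\QQ,\TT_{\bar{\frak{n}}'})\otimes\texttt{k}$ equals the image of $\kappa$ after further reduction, and both factor through the common residual Selmer group $H^1_{\FFc(n)}(\QQ,\bar T)$, which is one-dimensional over $\texttt{k}$ by the core vertex hypothesis on $\bar T$. The main obstacle, and the step I would focus on, is to identify the reduction map
\[ H^1_{\FFc(n)}(\QQ,\TT_{\bar{\frak{n}}})\otimes_{R_{\bar{\frak{n}}}}\texttt{k} \lra H^1_{\FFc(n)}(\QQ,\bar T) \]
as an isomorphism (control theorem). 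This follows from the same arguments used in the proof of Theorem~\ref{thm:corevertexn}: one shifts the obstruction into the dual Selmer group $H^1_{\FFc(n)^*}(\QQ,\bar T^*)$ via global duality (Poitou--Tate) and the Cartesian property of the local conditions, and this dual Selmer group vanishes by the core vertex assumption. Granting this, the image of $\kappa$ in $H^1_{\FFc(n)}(\QQ,\bar T)$ is a $\texttt{k}$-basis, so in particular non-zero, which forces the reduction of $\kappa'$ modulo the maximal ideal of $R_{\bar{\frak{n}}'}$ to be non-zero, and the Nakayama argument closes.
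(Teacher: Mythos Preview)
Your overall strategy is sound and will work, but it differs from the paper's argument, and the step you flag as ``the main obstacle'' is exactly where your justification is incomplete.

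The paper does not argue by Nakayama reduction to the residual Selmer group. Instead it inducts one parameter at a time: for the step from $\bar{\frak{n}}=(r+1,u,v,w)$ down to $\bar{\frak{n}}'=(r,u,v,w)$ it uses the identification (a variant of Proposition~\ref{prop:upperbound}(iv))
\[
[\varpi]\colon H^1_{\FFc(n)}(\QQ,\TT_{r,u,v,w})\;\stackrel{\sim}{\lra}\;H^1_{\FFc(n)}(\QQ,\TT_{r+1,u,v,w})[\varpi^{r}],
\]
so that the reduction map followed by this isomorphism is simply multiplication by $\varpi$ on $H^1_{\FFc(n)}(\QQ,\TT_{r+1,u,v,w})$. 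Since that module is free of rank one over $R_{r+1,u,v,w}$, multiplication by $\varpi$ surjects onto the $\varpi^{r}$-torsion, and surjectivity follows. Iterating over each of $\varpi,X_1,X_2,X_3$ gives the general case.

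Your Nakayama argument reduces the lemma to the special case $\bar{\frak{n}}'=(1,1,1,1)$, namely surjectivity of $H^1_{\FFc(n)}(\QQ,\TT_{\bar{\frak{n}}})\to H^1_{\FFc(n)}(\QQ,\bar T)$. That is logically fine, but your proposed justification for this special case --- ``shift the obstruction into $H^1_{\FFc(n)^*}(\QQ,\bar T^*)$ via Poitou--Tate'' --- is not how Theorem~\ref{thm:corevertexn} is proved, and it is not clear how global duality alone controls this cokernel (the obstruction naturally lives in an $H^2$, not a dual Selmer group). What \emph{does} prove your control theorem is precisely the paper's one-step argument iterated all the way down to $\bar T$: Proposition~\ref{prop:upperbound} identifies the smaller Selmer group with the torsion of the larger, and freeness of the larger gives surjectivity. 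So your approach is correct once completed, but its crucial step is most cleanly supplied by the paper's method rather than by the duality argument you sketch.
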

\begin{proof}
We verify the assertion of the Lemma for $\bar{\frak{n}}^\prime=(r,u,v,w)$ and $\bar{\frak{n}}=(r+1,u,v,w)$. The proof of the general case follows by applying this argument (or where necessary, its slightly modified form) repeatedly.

 We have the following commutative diagram, where the vertical isomorphism is obtained from (a slight variation of) Proposition~\ref{prop:upperbound}(iv) below:
 $$\xymatrix @C=.9in @R=.4in{
H^1_{\FFc(n)}(\QQ,\TT_{r+1,u,v,w}) \ar[r]^(.55){\textup{reduction}} \ar[rd]^(.52){\varpi}&  H^1_{\FFc(n)}(\QQ,\TT_{r,u,v,w})\ar[d]^(.45){[\varpi]}_{\cong}\\
& H^1_{\FFc(n)}(\QQ,T_{r+1,u,v,w})[\varpi^{r}]
}$$
Since $n\in \NN_{\bar{\frak{n}}}$ is a core vertex (and therefore $H^1_{\FFc(n)}(\QQ,\TT_{\bar{\frak{n}}})$ is a free $R_{\bar{\frak{n}}}$-module of rank one), the map on the diagonal is surjective. This proves that the horizontal map is surjective as well.
\end{proof}
\begin{lemma}
\label{lem:surj}
The map
$$\mathbf{KS}(\TT_{\bar{\frak{n}}},\FFc, \PP_{\bar{\frak{n}}}) \lra \mathbf{KS}(\TT_{\bar{\frak{n}}^\prime},\FFc, \PP_{\bar{\frak{n}}})$$ is surjective for $\bar{\frak{n}}^{\prime}\prec \bar{\frak{n}}$.
\end{lemma}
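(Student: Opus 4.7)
The plan is to reduce this to the corresponding statement for Selmer groups (Lemma~\ref{lem:surj0}) by evaluating Kolyvagin systems at a core vertex. First, I would choose a core vertex $n\in\NN_{\bar{\frak{n}}}$ for the Selmer structure $\FFc$ on the residual representation $\bar{T}$; such an $n$ exists by \cite[Corollary 4.1.9]{mr02}, and by Theorem~\ref{thm:corevertexn} it is simultaneously a core vertex for $\FFc$ on both $\TT_{\bar{\frak{n}}}$ and on $\TT_{\bar{\frak{n}}^\prime}$. Since $\bar{\frak{n}}^\prime\prec\bar{\frak{n}}$ implies $\PP_{\bar{\frak{n}}}\subseteq\PP_{\bar{\frak{n}}^\prime}$, we also have $n\in\NN_{\bar{\frak{n}}^\prime}$, so the modified Selmer structures $\FFc(n)$ are defined on both modules.

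Next, by Theorems~\ref{thm:KSsurjective} and \ref{thm:KSinjective}, the evaluation-at-$n$ map provides isomorphisms
$$\mathbf{KS}(\TT_{\bar{\frak{n}}},\FFc,\PP_{\bar{\frak{n}}})\stackrel{\sim}{\lra} H^1_{\FFc(n)}(\QQ,\TT_{\bar{\frak{n}}})\otimes G_n,$$
$$\mathbf{KS}(\TT_{\bar{\frak{n}}^\prime},\FFc,\PP_{\bar{\frak{n}}})\stackrel{\sim}{\lra} H^1_{\FFc(n)}(\QQ,\TT_{\bar{\frak{n}}^\prime})\otimes G_n.$$
Because the evaluation map $\pmb{\kappa}\mapsto\kappa_n$ is manifestly functorial in the coefficient module $T$, these two isomorphisms fit into a commutative square with the restriction map $\mathbf{KS}(\TT_{\bar{\frak{n}}},\FFc,\PP_{\bar{\frak{n}}})\to\mathbf{KS}(\TT_{\bar{\frak{n}}^\prime},\FFc,\PP_{\bar{\frak{n}}})$ on the left and the natural map on cohomology (tensored with the identity on $G_n$) on the right.

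It then suffices to show that the right-hand map is surjective. This is exactly Lemma~\ref{lem:surj0}: since $n\in\NN_{\bar{\frak{n}}}$ is a core vertex, the map $H^1_{\FFc(n)}(\QQ,\TT_{\bar{\frak{n}}})\to H^1_{\FFc(n)}(\QQ,\TT_{\bar{\frak{n}}^\prime})$ is surjective, and tensoring with the free $\ZZ$-module $G_n$ of finite rank preserves this surjectivity. Composing the isomorphisms with the surjection gives the desired surjectivity of the restriction map on Kolyvagin-system modules.

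The only genuine point to verify is the commutativity of the square relating the two evaluation isomorphisms to the restriction map, but this is immediate from the definition of the Selmer sheaf and the functoriality of the localization maps and of the finite-singular comparison homomorphism $\phi_\ell^{fs}$ in the coefficient module. The main obstacle, namely producing a core vertex in $\NN_{\bar{\frak{n}}}$ that behaves well under passage to $\TT_{\bar{\frak{n}}^\prime}$, has already been overcome by Theorem~\ref{thm:corevertexn}, so the proof reduces entirely to the previously established results.
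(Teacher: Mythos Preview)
Your proposal is correct and follows essentially the same argument as the paper: choose a core vertex $n\in\NN_{\bar{\frak{n}}}$, use Theorems~\ref{thm:KSsurjective} and \ref{thm:KSinjective} to identify both Kolyvagin-system modules with the corresponding Selmer groups at $n$, and then invoke Lemma~\ref{lem:surj0} for the surjectivity on the cohomology side. The paper additionally cites Lemma~\ref{lem:j} in passing, but your direct appeal to the two theorems (noting that $\bar{\frak{n}}^\prime\prec\bar{\frak{n}}$ puts $\PP_{\bar{\frak{n}}}$ in the admissible range for both modules) accomplishes the same thing.
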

\begin{proof}
 By Theorem~\ref{thm:KSsurjective}, Theorem~\ref{thm:KSinjective} and Lemma~\ref{lem:j} applied with a core vertex $n\in \NN_{\bar{\frak{n}}}$ to both $\TT_{\bar{\frak{n}}}$ and $\TT_{\bar{\frak{n}}^\prime}$, we obtain the following commutative diagram with vertical isomorphisms:
$$\xymatrix{
\mathbf{KS}(\TT_{\bar{\frak{n}}},\FFc, \PP_{\bar{\frak{n}}})  \ar[r]\ar[d]_{\cong}& \mathbf{KS}(\TT_{\bar{\frak{n}}^\prime},\FFc, \PP_{\bar{\frak{n}}}) \ar[d]^{\cong}\\
 H^1_{\FFc(n)}(\QQ,\TT_{\bar{\frak{n}}}) \otimes G_{n} \ar @{->>}[r] & H^1_{\FFc(n)}(\QQ,\TT_{\bar{\frak{n}}^\prime}) \otimes G_{n}
}$$
where the surjection in the second row is Lemma~\ref{lem:surj0}. It follows at once that the upper horizontal map in the diagram is surjective as well.
\end{proof}

\begin{proof}[Proof of Theorem~\ref{thm:KSmain}]
Lemma~\ref{lem:j} shows that
$$\varinjlim_{\bar{\frak{i}}} \mathbf{KS}(\TT_{\bar{\frak{n}}}, \FFc,\PP_{\bar{\frak{i}}})=\mathbf{KS}(\TT_{\bar{\frak{n}}},\FFc, \PP_{\bar{\frak{n}}}).$$
 The proof of (i) now follows by Theorem~\ref{thm:KSoverartinianringS} and Lemma~\ref{lem:surj}. (ii) is proved similarly, by appropriately modifying the ingredients that go into the proof of (i).
\end{proof}

\subsection{The existence of core vertices}
\label{sec:coreverticesexist}
In this subsection we verify the truth of Theorem~\ref{thm:corevertexn}.
\subsubsection{Cartesian properties}
\label{subsec:cartesianprop}
Let $\textup{Quot}(\TT)$ denote the collection $\{\TT_{\bar{\frak{n}}}: \bar{\frak{n}} \in (\ZZ^+)^4\}$ of quotients of $\TT$ and similarly, let $\textup{Quot}(\frak{T})=\{\frak{T}_{\bar{\frak{m}}}: \bar{\frak{m}} \in (\ZZ^+)^3\}\cup\{\frak{T}/\mm\}$. 

Given $\bar{\alpha}=(\alpha_1,\cdots,\alpha_d)\in (\ZZ^+)^d$ and $1\leq i \leq d$, we define $\bar{\alpha}_{+,i} \in (\ZZ^+)^d$ to be the $d$-tuple whose $j$th coordinate is $\alpha_i+\delta_{ij}$. Here $\delta_{ij}$ is Kronecker's delta.
\begin{define}
\label{iwasawa-cartesian}
A local condition $\FF$ at a prime $\ell$ is said to be \emph{cartesian} on the collection $\textup{Quot}(\TT)$ if it satisfies the following conditions:

For $\bar{\alpha} \in (\ZZ^+)^4$,
\begin{enumerate}
\item[\textbf{(C1)}] (Weak functoriality) 
\begin{itemize}
\item[\textbf{(C1.a)}] $H^1_{\FF}(\QQ_{\ell},\TT_{\bar{\alpha}})$ is the exact image of $H^1_{\FF}(\QQ_{\ell},\TT_{\bar{\alpha}_{+,1}})$ under the canonical map
    $$H^1(\QQ_{\ell},\TT_{\bar{\alpha}_{+,1}}) \lra H^1(\QQ_{\ell},\TT_{\bar{\alpha}}),$$
\item[\textbf{(C1.b)}] For $i\in \{2,3,4\}$,  $H^1_{\FF}(\QQ_{\ell},\TT_{\bar{\alpha}})$ lies in the image of  $H^1_{\FF}(\QQ_{\ell},\TT_{\bar{\alpha}_{+.i}})$ under the map
    $$H^1(\QQ_{\ell},\TT_{\bar{\alpha}_{+.i}}) \lra H^1(\QQ_{\ell},\TT_{\bar{\alpha}}).$$

    \end{itemize}
\item[\textbf{(C2)}] (Cartesian property) For $1\leq i \leq 4$,
$$\displaystyle H^1_{\FF}(\QQ_{\ell},\TT_{\bar{\alpha}})=\ker\left(H^1(\QQ_{\ell},\TT_{\bar{\alpha}})\lra \frac{H^1(\QQ_{\ell},\TT_{\bar{\alpha}_{+,i}})}{H^1_{\FF}(\QQ_{\ell},\TT_{\bar{\alpha}_{+,i}})}\right).$$
   If $i>1$, the arrow here is induced from the injection $\TT_{\bar{\alpha}}\stackrel{[X_i]}{\lra}\TT_{\bar{\alpha}_{+,i}}$, where $[X_i]$ stands for multiplication by $X_i$.
    When $i=1$, the arrow is induced from the injection $\TT_{\bar{\alpha}} \stackrel{[\varpi]}{\lra} \TT_{\bar{\alpha}_{+,1}}.$
\end{enumerate}
\end{define}

\begin{define}
\label{iwasawa-cartesian}
A local condition $\FF$ at a prime $\ell$ is said to be \emph{cartesian} on the collection $\textup{Quot}(\frak{T})$ if it satisfies the following conditions:

For $\bar{\alpha} \in (\ZZ^+)^3$,
\begin{enumerate}
\item[\textbf{(D1.a)}] $H^1_{\FF}(\QQ_{\ell},\frak{T}_{\bar{\alpha}})$ is the exact image of $H^1_{\FF}(\QQ_{\ell},\frak{T}_{\bar{\alpha}_{+,1}})$ under the canonical map
    $$H^1(\QQ_{\ell},\frak{T}_{\bar{\alpha}_{+,1}}) \lra H^1(\QQ_{\ell},\frak{T}_{\bar{\alpha}}),$$
\item[\textbf{(D1.b)}] For $i\in \{2,3\}$,  $H^1_{\FF}(\QQ_{\ell},\frak{T}_{\bar{\alpha}})$ lies in the image of  $H^1_{\FF}(\QQ_{\ell},\frak{T}_{\bar{\alpha}_{+.i}})$ under the map
    $$H^1(\QQ_{\ell},\frak{T}_{\bar{\alpha}_{+.i}}) \lra H^1(\QQ_{\ell},\frak{T}_{\bar{\alpha}}).$$
\item[\textbf{(D2)}]  For $1\leq i \leq 3$,
$$\displaystyle H^1_{\FF}(\QQ_{\ell},\frak{T}_{\bar{\alpha}})=\ker\left(H^1(\QQ_{\ell},\frak{T}_{\bar{\alpha}})\lra \frac{H^1(\QQ_{\ell},\frak{T}_{\bar{\alpha}_{+,i}})}{H^1_{\FF}(\QQ_{\ell},\frak{T}_{\bar{\alpha}_{+,i}})}\right).$$

\item[\textbf{(D3)}]$\displaystyle H^1_{\FF}(\QQ_{\ell},\frak{T}/\mm)=\ker\left(H^1(\QQ_{\ell},\frak{T}/\mm)\lra \frac{H^1(\QQ_{\ell},\frak{T}_{1,1,1})}{H^1_{\FF}(\QQ_{\ell},\frak{T}_{1,1,1})}\right),$ where the arrow is induced from the injection $\frak{R}/\mm=\mathtt{k}\stackrel{\sim}{\ra}\RR_0[\mm]\hookrightarrow \RR_0.$
\end{enumerate}
\end{define}
\subsubsection{Cartesian properties at $p$}
\begin{prop}
\label{propcartatptt} Assuming $($\textbf{\upshape{H.nA}}$)$, the local condition at $p$ given by $\FFc$ on the collection $\textup{Quot}(\TT)$ $($resp., on the collection $\textup{Quot}(\frak{T})$$)$ is cartesian.
\label{prop:cartatp}
\begin{proof}
This is obvious thanks to Proposition~\ref{prop:loccondatp}.
\end{proof}
\end{prop}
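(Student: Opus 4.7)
The plan is to leverage Proposition~\ref{prop:loccondatp} directly. That proposition asserts $H^1_{\FFc}(\QQ_p,M)=H^1(\QQ_p,M)$ for every module $M$ appearing in $\mathcal{C}$ or $\frak{C}$, and its proof (via Nakayama's lemma combined with the fact that $G_p$ has cohomological dimension $2$) establishes en route the stronger vanishing $H^2(\QQ_p,M)=0$ for every such $M$. Both facts will be in constant use.

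First I would dispatch all conditions whose right-hand side is the kernel of a map into a quotient of the shape $H^1(\QQ_p,-)/H^1_{\FFc}(\QQ_p,-)$: explicitly, (C2.a)--(C2.c), (C3), (D2.a)--(D2.b), (D3) and (D4). In each of these the denominator coincides with the numerator by Proposition~\ref{prop:loccondatp}, so the target quotient is the zero module, the kernel is all of $H^1(\QQ_p,M)$, and this in turn equals $H^1_{\FFc}(\QQ_p,M)$. Thus each such condition reduces to a tautology. The hybrid condition (D4), which compares $\frak{T}/\mm$ to $\frak{T}_{1,1,1}$, deserves a single separate line but follows for the same reason: both $H^1_{\FFc}(\QQ_p,\frak{T}/\mm)$ (by definition) and $H^1_{\FFc}(\QQ_p,\frak{T}_{1,1,1})$ (by Proposition~\ref{prop:loccondatp}) are the full local cohomology groups.

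Next I would handle the functoriality-style conditions (C1.a)--(C1.d) and (D1.a)--(D1.c). With $H^1_{\FFc}$ equal to the full $H^1$, these all reduce to the assertion that the relevant canonical map $H^1(\QQ_p,M^\prime)\to H^1(\QQ_p,M)$ is surjective. Each such map fits into a short exact sequence $0\to K\to M^\prime\to M\to 0$ (or, in the variant where the connecting map is multiplication by $\varpi$ or by one of the $X_i$, the reversed sequence $0\to M\to M^\prime\to K\to 0$) in which the auxiliary module $K$ is, after an obvious relabelling of indices, again a member of $\mathcal{C}$ or $\frak{C}$. The $H^2$-vanishing supplied by Proposition~\ref{prop:loccondatp} then applies to $K$, and the surjectivity of the map on $H^1$ drops out of the long exact sequence in $G_p$-cohomology.

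The only place demanding any care is organizational rather than substantive: one must identify, for each of the cartesian conditions, the correct short exact sequence of $G_p$-modules and verify that the kernel or cokernel genuinely lies in the collection so that Proposition~\ref{prop:loccondatp} applies. This is an inspection in every case, which is why the author feels entitled to a one-line proof. I do not anticipate a real obstacle; the argument is a diagram chase carried out uniformly across roughly a dozen formally distinct assertions.
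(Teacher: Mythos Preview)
Your proposal is correct and is precisely the unpacking of the paper's one-line proof: once Proposition~\ref{prop:loccondatp} gives $H^1_{\FFc}(\QQ_p,M)=H^1(\QQ_p,M)$ for every $M$ in the collections (and, from its proof, $H^2(\QQ_p,M)=0$ for every such $M$ as a quotient of $\TT$ or $\frak{T}$), every cartesian axiom at $p$ becomes either the tautology ``$\ker$ into the zero module is everything'' or a surjectivity statement forced by the vanishing of $H^2$ of the relevant kernel. There is nothing to add.
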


\subsubsection{Cartesian properties primes $\ell \neq p$ for the coefficient ring $R$}
Throughout this section the hypothesis (\textbf{H.Tam}) is in force.
\begin{lemma}
\label{lem:comparefiniteunr}$\,$
\begin{itemize}
\item[(i)] $H^1_{f}(\QQ_\ell,\mathcal{A}_{u,v,w})= H^1_{\textup{ur}}(\QQ_\ell,\mathcal{A}_{u,v,w}).$
\item[(ii)] $H^1_{f}(\QQ_\ell,\TT_{u,v,w})= H^1_{\textup{ur}}(\QQ_\ell,\TT_{u,v,w}).$
\item[(iii)]$H^1_{\FFc}(\QQ_\ell,\TT_{r,u,v,w})= H^1_{\textup{ur}}(\QQ_\ell,\TT_{r,u,v,w}).$
\item[(iv)] The following sequence is exact:
$$\xymatrix{
0\ar[r]& H^1_{\FFc}(\QQ_\ell,\TT_{r,u,v,w})\ar[r]& H^1(\QQ_\ell,\TT_{r,u,v,w})\ar[r]&\frac{H^1(\QQ_\ell,\mathcal{A}_{u,v,w})}{H^1_{f}(\QQ_\ell,\mathcal{A}_{u,v,w})}
}$$
\end{itemize}
\end{lemma}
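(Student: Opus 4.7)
\medskip
\noindent\emph{Proof plan.} The entire lemma will be extracted from a single input: the hypothesis $(\textbf{H.Tam})$ lets us show that $\mathcal{A}_{u,v,w}^{I_\ell}$ is $\varpi$-divisible. More precisely, writing $V_{u,v,w}:=\TT_{u,v,w}\otimes_\oo\Phi$, I expect that the $I_\ell$-invariants fit into
$$0\lra \TT_{u,v,w}^{I_\ell}\lra V_{u,v,w}^{I_\ell}\lra \mathcal{A}_{u,v,w}^{I_\ell}\lra H^1(I_\ell,\TT_{u,v,w})\lra H^1(I_\ell,V_{u,v,w})$$
in which the middle map is \emph{surjective}; this would follow from $\mathcal{A}_{u,v,w}^{I_\ell}$ being $\varpi$-divisible, since $V_{u,v,w}^{I_\ell}/\TT_{u,v,w}^{I_\ell}$ is automatically the maximal divisible submodule of $\mathcal{A}_{u,v,w}^{I_\ell}$. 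The propagation of the divisibility from the residual $(\textbf{H.Tam})(\textup{ii})$ up to the free quotient $\mathcal{A}_{u,v,w}$ is the key technical point (this is the content that will be referred to Proposition~\ref{prop:tamimpliestam}); its essence is a Nakayama-type argument, combined with $(\textbf{H.Tam})(\textup{i})$ which guarantees $\TT_{u,v,w}^{G_\ell}$ and similar invariants are small enough not to obstruct the divisibility as one deforms.

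For (i), once the middle map $V_{u,v,w}^{I_\ell}\twoheadrightarrow \mathcal{A}_{u,v,w}^{I_\ell}$ is surjective, I will take $\textup{Fr}_\ell$-coinvariants: one has $H^1_{\textup{ur}}(\QQ_\ell,M)=M^{I_\ell}/(\textup{Fr}_\ell-1)M^{I_\ell}$ for either coefficient module $M$, so the induced map $H^1_{\textup{ur}}(\QQ_\ell,V_{u,v,w})\to H^1_{\textup{ur}}(\QQ_\ell,\mathcal{A}_{u,v,w})$ is surjective. This gives $H^1_{\textup{ur}}(\QQ_\ell,\mathcal{A}_{u,v,w})\subseteq H^1_f(\QQ_\ell,\mathcal{A}_{u,v,w})$ by definition of $H^1_f$, while the reverse inclusion is the functoriality observation that the image of an unramified class along $V_{u,v,w}\to\mathcal{A}_{u,v,w}$ is unramified.

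For (ii), the same surjectivity $V_{u,v,w}^{I_\ell}\twoheadrightarrow \mathcal{A}_{u,v,w}^{I_\ell}$ forces the connecting map $\mathcal{A}_{u,v,w}^{I_\ell}\to H^1(I_\ell,\TT_{u,v,w})$ to vanish, so $H^1(I_\ell,\TT_{u,v,w})\hookrightarrow H^1(I_\ell,V_{u,v,w})$; the two candidate kernels defining $H^1_{\textup{ur}}$ and $H^1_f$ inside $H^1(\QQ_\ell,\TT_{u,v,w})$ then coincide. For (iii), I combine (\ref{eqn:caneqfin}) (which gives $H^1_{\FFc}(\QQ_\ell,\TT_{r,u,v,w})=H^1_f(\QQ_\ell,\TT_{r,u,v,w})$) with part (i) and unwind the definition of $H^1_f(\QQ_\ell,\TT_{r,u,v,w})$ via the injection $\TT_{r,u,v,w}\hookrightarrow \mathcal{A}_{u,v,w}$, checking that an unramified class in $\TT_{r,u,v,w}$ maps to an unramified (hence $H^1_f$) class in $\mathcal{A}_{u,v,w}$ and conversely, using the same divisibility established above. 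Finally, (iv) is a direct restatement of (iii) together with the definition of $H^1_f(\QQ_\ell,\TT_{r,u,v,w})$.

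\medskip
\noindent The main obstacle I anticipate is verifying the divisibility of $\mathcal{A}_{u,v,w}^{I_\ell}$ from the residual hypothesis $(\textbf{H.Tam})(\textup{ii})$, since $\mathcal{A}_{u,v,w}$ is a large divisible module over $R_{u,v,w}$ and one must show the $\varpi$-divisibility persists after taking $I_\ell$-invariants; this is where $(\textbf{H.Tam})(\textup{i})$ (together with $(\textbf{H3})$) enters to ensure the invariants behave well under reduction modulo the various powers of $X_i$ and $\varpi$.
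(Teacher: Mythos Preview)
Your entire strategy rests on establishing that $\mathcal{A}_{u,v,w}^{I_\ell}$ is $\varpi$-divisible, i.e.\ that $\mathcal{W}:=\mathcal{A}_{u,v,w}^{I_\ell}/(\mathcal{A}_{u,v,w}^{I_\ell})_{\textup{div}}$ vanishes. This is strictly stronger than what is required, and there is a real gap in your proposed propagation. Even granting the base case $\mathcal{A}_{1,1,1}^{I_\ell}$ divisible (which is what $(\textbf{H.Tam})(\textup{ii})$ would say in the $R$-setting), the inductive step does not go through: from the exact sequence
\[
0\lra \mathcal{A}_{u,v,w}^{I_\ell}\lra \mathcal{A}_{u+1,v,w}^{I_\ell}\lra Q\lra 0,\qquad Q\subset \mathcal{A}_{1,v,w}^{I_\ell},
\]
divisibility of the left term and of $\mathcal{A}_{1,v,w}^{I_\ell}$ does \emph{not} force divisibility of the middle term, because $Q$ is only a submodule of a divisible module and submodules of $(\Phi/\oo)^n$ are typically not divisible. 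Your reference to a ``Nakayama-type argument'' is too vague to fill this gap, and Proposition~\ref{prop:tamimpliestam} concerns Tamagawa numbers of elliptic curves and is unrelated to this propagation.

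The paper takes a different and more robust route: it does not attempt to show $\mathcal{W}=0$, only the weaker $\mathcal{W}^{\textup{Fr}_\ell=1}=0$ (deferred to Lemma~\ref{lem:technicalhelp}). By \cite[Lemma~1.3.5]{r00} there are exact sequences
\[
0\to H^1_f(\QQ_\ell,\mathcal{A}_{u,v,w})\to H^1_{\textup{ur}}(\QQ_\ell,\mathcal{A}_{u,v,w})\to \mathcal{W}/(\textup{Fr}_\ell-1)\mathcal{W}\to 0,
\]
\[
0\to H^1_{\textup{ur}}(\QQ_\ell,\TT_{u,v,w})\to H^1_f(\QQ_\ell,\TT_{u,v,w})\to \mathcal{W}^{\textup{Fr}_\ell=1}\to 0,
\]
and since $\mathcal{W}$ is finite, $\mathcal{W}^{\textup{Fr}_\ell=1}=0$ forces $\mathcal{W}/(\textup{Fr}_\ell-1)\mathcal{W}=0$ as well, yielding (i) and (ii) simultaneously. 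The vanishing $\mathcal{W}^{\textup{Fr}_\ell=1}=0$ is obtained from $(\textbf{H.Tam})(\textup{i})$ alone: one first propagates $H^0(\QQ_\ell,\bar{T})=0$ inductively to $H^0(\QQ_\ell,\mathcal{A}_{u,v,w})=0$ (this propagation, for full $G_\ell$-invariants rather than $I_\ell$-invariants, \emph{does} succeed), and then exploits this to embed $\mathcal{W}^{\textup{Fr}_\ell=1}$ into $(\mathcal{A}_{u,v,w}^{I_\ell})_{\textup{div}}/(\textup{Fr}_\ell-1)$, which vanishes by a counting argument on $\varpi$-power torsion. Part (iii) then follows by sandwiching $H^1_{\FFc}$ between two copies of $H^1_{\textup{ur}}$ via (\ref{eqn:caneqfin}), (i) and (ii); part (iv) is immediate from (\ref{eqn:caneqfin}).
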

\begin{proof}
(iv) follows from (\ref{eqn:caneqfin}).

By \cite[Lemma 1.3.5]{r00} we have the following two exact sequences:
\be\label{eqn:seqA}0\lra H^1_{f}(\QQ_\ell,\mathcal{A}_{u,v,w}) \lra H^1_{\textup{ur}}(\QQ_\ell,\mathcal{A}_{u,v,w})\lra \mathcal{W}/(\textup{Fr}_\ell-1)\mathcal{W}\lra 0\ee
\be\label{eqn:seqT}0\lra H^1_{\textup{ur}}(\QQ_\ell,\TT_{u,v,w})\lra H^1_{f}(\QQ_\ell,\TT_{u,v,w}) \lra  \mathcal{W}^{\textup{Fr}_\ell=1}\lra0\ee
where $\mathcal{W}=\mathcal{A}_{u,v,w}^{I_v}/(\mathcal{A}_{u,v,w}^{I_\ell})_{\textup{div}}$. In Lemma~\ref{lem:technicalhelp}, we check under the assumption (\textbf{H.Tam}) that $\mathcal{W}^{\textup{Fr}_\ell=1}=0$. Since $\mathcal{W}$ is a finite module, the exact sequence
$$0\lra\mathcal{W}^{\textup{Fr}_\ell=1}\lra \mathcal{W} \stackrel{\textup{Fr}_\ell-1}{\lra} \mathcal{W} \lra \mathcal{W}/(\textup{Fr}_\ell-1)\mathcal{W}$$
shows that $\mathcal{W}/(\textup{Fr}_\ell-1)\mathcal{W}=0$. This proves that
\be\label{eqn:fequr} H^1_{f}(\QQ_\ell,\mathcal{A}_{u,v,w})= H^1_{\textup{ur}}(\QQ_\ell,\mathcal{A}_{u,v,w}) \,\,\,,\,\,\, H^1_{\textup{ur}}(\QQ_\ell,\TT_{u,v,w})= H^1_{f}(\QQ_\ell,\TT_{u,v,w}).\ee
This proves (i) and (ii). By (\ref{eqn:caneqfin}) and (\ref{eqn:fequr}) it now follows that
$$ H^1_{\FFc}(\QQ_\ell,\TT_{r,u,v,w})=\textup{im}\left(H^1_{\textup{ur}}(\QQ_\ell,\TT_{u,v,w})\ra H^1(\QQ_\ell,\TT_{r,u,v,w})\right)\subset H^1_{\textup{ur}}(\QQ_\ell,\TT_{k, u,v,w})$$
$$H^1_{\FFc}(\QQ_\ell,\TT_{r,u,v,w})=\ker\left(H^1(\QQ_\ell,\TT_{r,u,v,w}) \lra  \frac{H^1(\QQ_\ell,\mathcal{A}_{u,v,w})}{H^1_{\textup{ur}}(\QQ_\ell,\mathcal{A}_{u,v,w})}\right)\supset H^1_{\textup{ur}}(\QQ_\ell,\TT_{k, u,v,w})$$
and the proof of (iii) follows.
\end{proof}
\begin{lemma}
\label{lem:technicalhelp}
$\mathcal{W}^{\textup{Fr}_\ell=1}=0$.
\end{lemma}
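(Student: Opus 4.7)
Write $N=\mathcal{A}_{u,v,w}^{I_\ell}$ and $N_0=N_{\textup{div}}$, so that $\mathcal{W}=N/N_0$ is a finite $\oo$-module. The plan is to first show that $\mathcal{A}_{u,v,w}^{G_\ell}=0$ by a socle-plus-Nakayama argument powered by (\textbf{H.Tam})(i), and then to deduce $\mathcal{W}^{\textup{Fr}_\ell=1}=0$ from the divisibility of $N_0$.

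For the first step I would identify $\mathcal{A}_{u,v,w}[\mathcal{M}]$ with $\bar{T}$ as a $G_\QQ$-module. Since $\TT_{u,v,w}$ is $\oo$-free of finite rank, one has $\mathcal{A}_{u,v,w}[\varpi]=\TT_{u,v,w}/\varpi=\TT_{1,u,v,w}$, which is free of rank $n=\textup{rank}_R\TT$ over the Gorenstein Artinian local ring $R_{1,u,v,w}=\texttt{k}[X_1,X_2,X_3]/(X_1^u,X_2^v,X_3^w)$. The socle of $R_{1,u,v,w}$ is one-dimensional over $\texttt{k}$ and generated by $X_1^{u-1}X_2^{v-1}X_3^{w-1}$, so multiplication by this element gives a $G_\QQ$-equivariant isomorphism $\bar{T}\stackrel{\sim}{\to}\mathcal{A}_{u,v,w}[\mathcal{M}]$. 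Taking $G_\ell$-invariants yields $\mathcal{A}_{u,v,w}^{G_\ell}[\mathcal{M}]=\bar{T}^{G_\ell}=0$ by (\textbf{H.Tam})(i). Since $\mathcal{A}_{u,v,w}^{G_\ell}$ is cofinitely generated over $R$, Pontryagin-dualizing the vanishing of its $\mathcal{M}$-torsion and applying ordinary Nakayama to the resulting finitely generated $R$-module gives $\mathcal{A}_{u,v,w}^{G_\ell}=0$. In particular $N^{\textup{Fr}_\ell=1}=\mathcal{A}_{u,v,w}^{G_\ell}=0$.

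Next, taking $\textup{Fr}_\ell$-invariants in $0\to N_0\to N\to \mathcal{W}\to 0$ produces $N_0^{\textup{Fr}_\ell=1}=0$ together with an injection $\mathcal{W}^{\textup{Fr}_\ell=1}\hookrightarrow N_0/(\textup{Fr}_\ell-1)N_0$. Since $N_0$ is a cofinitely generated divisible $\oo$-module, i.e.\ $N_0\cong(\Phi/\oo)^m$ as an $\oo$-module, the elementary divisor theorem applied to the Tate module $\oo^m$ of $N_0$ shows that an $\oo$-linear endomorphism is injective on $N_0$ iff it is surjective on $N_0$. Applied to $\textup{Fr}_\ell-1$ this forces $N_0/(\textup{Fr}_\ell-1)N_0=0$ and hence $\mathcal{W}^{\textup{Fr}_\ell=1}=0$.

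The analogous statement in the Iwasawa setting for quotients of $\frak{T}$ should be verified by exactly the same argument, the only change being that the socle identification relies on the fact that $\frak{R}$ modulo its maximal ideal has one-dimensional socle, as recorded in (\ref{eqn:gorzeromodm}). The main technical point throughout is the socle calculation that reduces control of $\mathcal{A}_{u,v,w}^{G_\ell}$ to the residual vanishing $\bar{T}^{G_\ell}=0$; once that is in hand, the Pontryagin-dual Nakayama descent and the divisible-module endgame are routine.
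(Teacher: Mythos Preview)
Your argument is correct and follows the same overall architecture as the paper's proof: first establish $H^0(\QQ_\ell,\mathcal{A}_{u,v,w})=0$ from (\textbf{H.Tam})(i), then use the exact sequence $0\to N_0\to N\to\mathcal{W}\to 0$ to inject $\mathcal{W}^{\textup{Fr}_\ell=1}$ into $N_0/(\textup{Fr}_\ell-1)N_0$, and finally show this quotient vanishes using the divisibility of $N_0$.

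The differences are purely in implementation. For the vanishing of $\mathcal{A}_{u,v,w}^{G_\ell}$, the paper devissages via the exact sequences $0\to\mathcal{A}_{*}\to\mathcal{A}_{*+1}\to\mathcal{A}_{1}\to0$ in each variable, reducing inductively to the case $u=v=w=1$; you instead identify $\mathcal{A}_{u,v,w}[\mathcal{M}]\cong\bar{T}$ in one stroke via the one-dimensional socle of the Gorenstein Artinian ring $R_{1,u,v,w}$ and apply Nakayama on the Pontryagin dual. Your route is a bit slicker and makes the Gorenstein input explicit, which is pleasant since the same device is used elsewhere in the paper (cf.\ (\ref{eqn:gorzeromodm})). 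For the final step, the paper passes through the finite $\varpi^\alpha$-torsion pieces and uses a counting argument, whereas you argue directly with the structure $N_0\cong(\Phi/\oo)^m$: dualizing, an injective $\oo$-endomorphism of $(\Phi/\oo)^m$ corresponds to a surjective endomorphism of $\oo^m$, which is then automatically injective (surjective endomorphisms of Noetherian modules are injective), giving surjectivity of $\textup{Fr}_\ell-1$ on $N_0$. Both endgames amount to the same ``injective on a self-dual object implies bijective'' principle.
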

\begin{proof}
As we have $\mathcal{A}_{1,1,1}[\varpi]=\overline{T}$, it follows that
$H^0(\QQ_\ell,\mathcal{A}_{1,1,1}[\varpi])=0$ since we assume \textbf{H.Tam}, hence also that
$$H^0(\QQ_\ell,\mathcal{A}_{1,1,1})=0.$$
Using the $G_{\QQ_\ell}$-cohomology of the exact sequences
$$0\lra \mathcal{A}_{1,1,w}\stackrel{[X_3]}{\lra} \mathcal{A}_{1,1,w+1}\lra \mathcal{A}_{1,1,1}\lra0
$$
$$0\lra \mathcal{A}_{1,v,w}\stackrel{[X_2]}{\lra} \mathcal{A}_{1,v+1,w}\lra \mathcal{A}_{1,1,w}\lra0,
$$
$$0\lra \mathcal{A}_{u,v,w}\stackrel{[X_1]}{\lra} \mathcal{A}_{u+1,v,w}\lra \mathcal{A}_{1,v,w}\lra0,
$$

it follows by induction that
\be\label{eqn:vanishingzero}H^0(\QQ_\ell,\mathcal{A}_{u,v,w})=0.\ee
Taking the $G_{\QQ_\ell}/I_\ell$-invariance of the short exact sequence
$$0\lra(\mathcal{A}_{u,v,w}^{I_\ell})_{\textup{div}}\lra\mathcal{A}_{u,v,w}^{I_\ell}\lra \mathcal{W}\lra0$$
we see by (\ref{eqn:vanishingzero}) that
$$\mathcal{W}^{\textup{Fr}_\ell=1}\hookrightarrow H^1(G_{\QQ_\ell}/I_\ell,(\mathcal{A}_{u,v,w}^{I_\ell})_{\textup{div}})\cong (\mathcal{A}_{u,v,w}^{I_\ell})_{\textup{div}}/(\textup{Fr}_\ell-1).$$
To conclude with the proof, it therefore suffices to show that
$$(\mathcal{A}_{u,v,w}^{I_\ell})_{\textup{div}}/(\textup{Fr}_\ell-1)=0.$$
For any $\alpha \in \ZZ^+$, (\ref{eqn:vanishingzero}) shows
\be\label{eqn:usefulvanish}H^0(G_{\QQ_\ell}/I_\ell, (\mathcal{A}_{u,v,w}^{I_\ell})_{\textup{div}}[\varpi^\alpha])=0.\ee
 The exact sequence
$$\left((\mathcal{A}_{u,v,w}^{I_\ell})_{\textup{div}}[\varpi^\alpha]\right)^{\textup{Fr}_\ell=1}\ra (\mathcal{A}_{u,v,w}^{I_\ell})_{\textup{div}}[\varpi^\alpha] \stackrel{\textup{Fr}_\ell-1}{\lra}(\mathcal{A}_{u,v,w}^{I_\ell})_{\textup{div}}[\varpi^\alpha]\ra \mathcal{A}_{u,v,w}^{I_\ell}[\varpi^\alpha]/(\textup{Fr}_\ell-1)\ra 0$$
and (\ref{eqn:usefulvanish}) shows that $\mathcal{A}_{u,v,w}^{I_\ell}[\varpi^\alpha]/(\textup{Fr}_\ell-1)=0$. Passing to direct limit the Lemma follows.

\end{proof}
By Lemma~\ref{lem:comparefiniteunr}(iv) we have the following commutative diagram with exact rows:
$$\xymatrix{0\ar[r]& H^1_{\FFc}(\QQ_\ell,\TT_{r,u,v,w})\ar[r]\ar[d]& H^1(\QQ_\ell,\TT_{r,u,v,w})\ar[r]\ar[d]&\frac{H^1(\QQ_\ell,\mathcal{A}_{u,v,w})}{H^1_{f}(\QQ_\ell,\mathcal{A}_{u,v,w})}\ar[d]^{\alpha}\\
0\ar[r]& H^1_{\FFc}(\QQ_\ell,\TT_{r,u+1,v,w})\ar[r]& H^1(\QQ_\ell,\TT_{r,u+1,v,w})\ar[r]&\frac{H^1(\QQ_\ell,\mathcal{A}_{u+1,v,w})}{H^1_{f}(\QQ_\ell,\mathcal{A}_{u+1,v,w})} }$$
\begin{lemma}
\label{lem:alphainj}
The map $\alpha$ is injective if
$$\beta:  H^1(I_\ell,\mathcal{A}_{u,v,w})^{\textup{Fr}_\ell=1}\lra  H^1(I_\ell,\mathcal{A}_{u+1,v,w})^{\textup{Fr}_\ell=1}$$
is injective.
\end{lemma}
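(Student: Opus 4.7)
The plan is to reduce the injectivity of $\alpha$ to the injectivity of $\beta$ via the inflation-restriction sequence applied to the pair $(G_\ell/I_\ell, I_\ell)$. First I would invoke Lemma~\ref{lem:comparefiniteunr}(i), which identifies $H^1_f(\QQ_\ell,\mathcal{A}_{u,v,w})$ with the unramified subgroup $H^1_{\textup{ur}}(\QQ_\ell,\mathcal{A}_{u,v,w})$, and similarly for $\mathcal{A}_{u+1,v,w}$. This is crucial because the unramified subgroup is exactly $H^1(G_\ell/I_\ell,\mathcal{A}^{I_\ell})$, the inflated part.

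Next I would use the five-term exact sequence coming from inflation-restriction:
\[
0\lra H^1(G_\ell/I_\ell,\mathcal{A}_{u,v,w}^{I_\ell}) \lra H^1(\QQ_\ell,\mathcal{A}_{u,v,w}) \lra H^1(I_\ell,\mathcal{A}_{u,v,w})^{\textup{Fr}_\ell=1}.
\]
Combined with the identification above, this produces a canonical injection
\[
\frac{H^1(\QQ_\ell,\mathcal{A}_{u,v,w})}{H^1_f(\QQ_\ell,\mathcal{A}_{u,v,w})} \hookrightarrow H^1(I_\ell,\mathcal{A}_{u,v,w})^{\textup{Fr}_\ell=1},
\]
and analogously for the $(u+1,v,w)$-level. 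The map $[X_1]:\mathcal{A}_{u,v,w}\to\mathcal{A}_{u+1,v,w}$ is $G_\ell$-equivariant, and hence induces maps on each of the four terms above that fit into a commutative square
\[
\xymatrix @R=.3in {
\dfrac{H^1(\QQ_\ell,\mathcal{A}_{u,v,w})}{H^1_f(\QQ_\ell,\mathcal{A}_{u,v,w})} \ar@{^{(}->}[r] \ar[d]^{\alpha} & H^1(I_\ell,\mathcal{A}_{u,v,w})^{\textup{Fr}_\ell=1} \ar[d]^{\beta} \\
\dfrac{H^1(\QQ_\ell,\mathcal{A}_{u+1,v,w})}{H^1_f(\QQ_\ell,\mathcal{A}_{u+1,v,w})} \ar@{^{(}->}[r] & H^1(I_\ell,\mathcal{A}_{u+1,v,w})^{\textup{Fr}_\ell=1}
}
\]
in which the horizontal arrows are the inflation-restriction injections just produced.

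Finally I would perform the straightforward diagram chase: if $x$ lies in the kernel of $\alpha$, then its image in $H^1(I_\ell,\mathcal{A}_{u+1,v,w})^{\textup{Fr}_\ell=1}$ is $\beta$ applied to the image of $x$ under the top horizontal injection, and this image vanishes by commutativity. The injectivity hypothesis on $\beta$ forces the image of $x$ in $H^1(I_\ell,\mathcal{A}_{u,v,w})^{\textup{Fr}_\ell=1}$ to be zero, and then the injectivity of the top horizontal arrow forces $x=0$. There is no real obstacle here: the only nontrivial input is the identification $H^1_f = H^1_{\textup{ur}}$, which has already been established in Lemma~\ref{lem:comparefiniteunr} under the assumption $(\textbf{H.Tam})$; the rest is formal homological algebra.
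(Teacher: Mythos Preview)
Your proof is correct and essentially identical to the paper's: the paper also draws the same commutative square with injective horizontal arrows coming from the Hochschild-Serre (inflation-restriction) sequence together with the identification $H^1_f=H^1_{\textup{ur}}$ from Lemma~\ref{lem:comparefiniteunr}(i), and then reads off the implication by the obvious diagram chase.
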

\begin{proof}
This follows from the commutative diagram
$$\xymatrix{0\lra \frac{H^1(\QQ_\ell,\mathcal{A}_{u,v,w})}{H^1_f(\QQ_\ell,\mathcal{A}_{u,v,w})} \ar[r]\ar[d]_\alpha & H^1(I_\ell,\mathcal{A}_{u,v,w})^{\textup{Fr}_\ell=1}\ar[d]^\beta\\
0\lra \frac{H^1(\QQ_\ell,\mathcal{A}_{u+1,v,w})}{H^1_f(\QQ_\ell,\mathcal{A}_{u+1,v,w})}\ar[r] & H^1(I_\ell,\mathcal{A}_{u+1,v,w})^{\textup{Fr}_\ell=1}}$$
whose exact rows come from the Hochschild-Serre spectral sequence and the fact that
$$H^1_f(\QQ_\ell,\mathcal{A}_{u,v,w})=H^1_{\textup{ur}}(\QQ_\ell,\mathcal{A}_{u,v,w}):=\ker\left(H^1(\QQ_\ell,\mathcal{A}_{u,v,w})\lra H^1(I_\ell,\mathcal{A}_{u,v,w})^{\textup{Fr}_\ell=1}\right),$$
where the first equality is Lemma~\ref{lem:comparefiniteunr}(i).
\end{proof}
Consider the short exact sequence
$$0\lra \mathcal{A}_{u,v,w}\stackrel{[X_1]}{\lra} \mathcal{A}_{u+1,v,w}\lra \mathcal{A}_{1,v,w}\lra0
$$
The $I_\ell$-cohomology of this sequence gives
\be\label{eqn:reductionseq1}
0\lra \mathcal{A}_{u+1,v,w}^{I_\ell}/\mathcal{A}_{u,v,w}^{I_\ell}\lra \mathcal{A}_{1,v,w}^{I_\ell}\lra H^1(I_\ell,\mathcal{A}_{u,v,w}) \lra  H^1(I_\ell,\mathcal{A}_{u+1,v,w})\ee
To ease the notation set
$$\mathcal{K}_{v,w}=\mathcal{A}_{u+1,v,w}^{I_\ell}/\mathcal{A}_{u,v,w}^{I_\ell},$$
so that the sequence (\ref{eqn:reductionseq1}) may be rewritten as
\be\label{eqn:reductionseq2}
0\lra \mathcal{A}_{1,v,w}^{I_\ell}/\mathcal{K}_{v,w}\lra H^1(I_\ell,\mathcal{A}_{u,v,w}) \lra  H^1(I_\ell,\mathcal{A}_{u+1,v,w})
\ee
Taking $G_{\QQ_\ell}/I_\ell$-invariance in (\ref{eqn:reductionseq2}), we conclude that
\begin{lemma}$\ker(\beta)\cong H^0(G_{\QQ_\ell}/I_\ell, \mathcal{A}_{1,v,w}^{I_\ell}/\mathcal{K}_{v,w})$.
\end{lemma}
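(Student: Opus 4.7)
The claim is an immediate consequence of taking $G_\ell/I_\ell$-invariants in the four-term exact sequence (\ref{eqn:reductionseq2}). More precisely, the plan is as follows.

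Starting from the exact sequence
\[
0\lra \mathcal{A}_{1,v,w}^{I_\ell}/\mathcal{K}_{v,w}\lra H^1(I_\ell,\mathcal{A}_{u,v,w}) \stackrel{[X_1]_*}{\lra} H^1(I_\ell,\mathcal{A}_{u+1,v,w})
\]
of $G_\ell/I_\ell$-modules (note that all three terms inherit a well-defined Frobenius action since the inertia group acts trivially on each of them, and the maps are Galois-equivariant because they come from the $I_\ell$-cohomology of a $G_\ell$-equivariant sequence), I would apply the left-exact functor $H^0(G_\ell/I_\ell,-)=(-)^{\textup{Fr}_\ell=1}$. Left-exactness gives the exact sequence
\[
0\lra H^0\!\left(G_\ell/I_\ell,\mathcal{A}_{1,v,w}^{I_\ell}/\mathcal{K}_{v,w}\right)\lra H^1(I_\ell,\mathcal{A}_{u,v,w})^{\textup{Fr}_\ell=1}\stackrel{\beta}{\lra} H^1(I_\ell,\mathcal{A}_{u+1,v,w})^{\textup{Fr}_\ell=1},
\]
from which the asserted identification of $\ker(\beta)$ is immediate.

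The only point that requires a small sanity check is that the arrow on the right in the displayed $(-)^{\textup{Fr}_\ell=1}$ sequence is indeed the map $\beta$ defined in Lemma~\ref{lem:alphainj}, i.e.\ that the connecting map produced by the $I_\ell$-cohomology of the short exact sequence $0\to\mathcal{A}_{u,v,w}\stackrel{[X_1]}{\to}\mathcal{A}_{u+1,v,w}\to\mathcal{A}_{1,v,w}\to 0$ is compatible under $(-)^{\textup{Fr}_\ell=1}$ with the multiplication-by-$X_1$ induced map on the relevant local cohomology. This is just functoriality of Hochschild--Serre together with the observation that $[X_1]$ acts trivially on $\mathcal{A}_{1,v,w}$, hence no obstruction arises in either direction.

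There is no serious obstacle to this step: it is purely a formal extraction from the exact sequence (\ref{eqn:reductionseq2}) already established, and does not require any of the hypotheses (\textbf{H.Tam}), (\textbf{H.nA}), etc.\ beyond what has already gone into producing (\ref{eqn:reductionseq2}). The substantive content needed for the rest of the cartesian verification lies not here but in analyzing $H^0(G_\ell/I_\ell,\mathcal{A}_{1,v,w}^{I_\ell}/\mathcal{K}_{v,w})$ and showing it vanishes under $(\textbf{H.Tam})$, which is presumably treated in the lemmas following this one.
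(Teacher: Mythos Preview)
Your argument is correct and is exactly the paper's approach: the lemma follows immediately by applying the left-exact functor $(-)^{\textup{Fr}_\ell=1}$ to the exact sequence (\ref{eqn:reductionseq2}). The additional sanity checks you include are harmless but not needed.
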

\begin{lemma}
\label{lem:redatell1}
Under the assumption that \textup{(\textbf{H.Tam})} holds true,
\begin{itemize}
\item[(i)] $H^0(\QQ_\ell,\mathcal{A}_{1,v,w})=0,$
\item[(ii)] $H^0(\QQ_\ell,\mathcal{K}_{v,w})=0.$
\end{itemize}
\end{lemma}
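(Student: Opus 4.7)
The plan is to deduce both statements from the vanishing established in Lemma~\ref{lem:technicalhelp}, specifically from equation~(\ref{eqn:vanishingzero}).

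For part (i), I would simply observe that (\ref{eqn:vanishingzero}) states $H^0(\QQ_\ell,\mathcal{A}_{u,v,w})=0$ for \emph{all} positive integers $u,v,w$. Setting $u=1$ gives the conclusion immediately; no additional work is needed.

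For part (ii), the key is to exhibit $\mathcal{K}_{v,w}$ as a $G_\ell$-submodule of $\mathcal{A}_{1,v,w}$ and then invoke part (i). Taking $I_\ell$-invariants of the short exact sequence
\[
0\lra \mathcal{A}_{u,v,w}\stackrel{[X_1]}{\lra} \mathcal{A}_{u+1,v,w}\lra \mathcal{A}_{1,v,w}\lra 0
\]
yields a $G_\ell/I_\ell$-equivariant injection
\[
\mathcal{K}_{v,w}=\mathcal{A}_{u+1,v,w}^{I_\ell}/\mathcal{A}_{u,v,w}^{I_\ell}\hookrightarrow \mathcal{A}_{1,v,w}^{I_\ell}.
\]
Since the $G_\ell$-action on $\mathcal{K}_{v,w}$ factors through $G_\ell/I_\ell$, we have $H^0(\QQ_\ell,\mathcal{K}_{v,w})=H^0(G_\ell/I_\ell,\mathcal{K}_{v,w})$, and the displayed injection yields
\[
H^0(\QQ_\ell,\mathcal{K}_{v,w})\hookrightarrow H^0(G_\ell/I_\ell,\mathcal{A}_{1,v,w}^{I_\ell})=H^0(\QQ_\ell,\mathcal{A}_{1,v,w}),
\]
which vanishes by part (i).

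There is no real obstacle here: all the genuine work in proving the vanishing under \textbf{H.Tam} was already carried out in Lemma~\ref{lem:technicalhelp} (where the induction on $u,v,w$ propagates vanishing from $\bar T$), and the present lemma merely packages the consequences in the form needed for the injectivity statement of Lemma~\ref{lem:alphainj} and the cartesian property arguments to follow.
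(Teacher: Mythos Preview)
The proposal is correct and follows essentially the same approach as the paper. For (i), you cite equation~(\ref{eqn:vanishingzero}) from the proof of Lemma~\ref{lem:technicalhelp} directly, whereas the paper redundantly reproves the $u=1$ case by the same inductive argument; for (ii), both you and the paper observe that $\mathcal{K}_{v,w}$ embeds $G_\ell$-equivariantly into $\mathcal{A}_{1,v,w}$ and invoke (i).
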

\begin{proof}
Noting that $\overline{T}\cong \mathcal{A}_{1,1,1}$, Hypothesis (\textbf{H.Tam}) shows that
$$H^0(\QQ_\ell,\mathcal{A}_{1,1,1}[\varpi])=0$$
and also that $H^0(\QQ_\ell,\mathcal{A}_{1,1,1})=0$.
The $G_{\QQ_\ell}$-invariance of the sequence
$$0\lra \mathcal{A}_{1,1,w-1}\stackrel{[X_3]}{\lra} \mathcal{A}_{1,1,w} \lra \mathcal{A}_{1,1,1}\lra 0$$
shows by induction that $H^0(\QQ_\ell,\mathcal{A}_{1,1,w})=0$ for all $w\in \ZZ_{\geq2}$.
Using similarly the exact sequence
$$0\lra \mathcal{A}_{1,v-1,w}\stackrel{[X_2]}{\lra} \mathcal{A}_{1,v,w}\lra \mathcal{A}_{1,1,w}\lra0$$
we conclude with the proof of (i). (ii) follows from (i) as $\mathcal{K}_{v,w}$ is a submodule of $\mathcal{A}_{1,v,w}$.
\end{proof}
\begin{prop}
\label{prop:betainj}
$\ker(\beta)=0$.
\end{prop}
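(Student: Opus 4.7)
The plan is to invoke the identification already established just before the proposition, namely $\ker(\beta)\cong H^0(G_\ell/I_\ell,\mathcal{A}_{1,v,w}^{I_\ell}/\mathcal{K}_{v,w})$, and then to extract the vanishing from the Galois cohomology of the defining short exact sequence
$$0\lra \mathcal{K}_{v,w}\lra \mathcal{A}_{1,v,w}^{I_\ell}\lra \mathcal{A}_{1,v,w}^{I_\ell}/\mathcal{K}_{v,w}\lra 0,$$
together with Lemma~\ref{lem:redatell1}. Note that $\mathcal{K}_{v,w}=\mathcal{A}_{u+1,v,w}^{I_\ell}/\mathcal{A}_{u,v,w}^{I_\ell}$ is by construction fixed by $I_\ell$, so taking its $G_\ell/I_\ell$-invariants is the same as taking its $G_\ell$-invariants.

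First I would apply the functor $H^{\ast}(G_\ell/I_\ell,-)$ to the sequence above, yielding a long exact sequence whose opening reads
$$0\lra H^0(\QQ_\ell,\mathcal{K}_{v,w})\lra H^0(\QQ_\ell,\mathcal{A}_{1,v,w})\lra H^0(G_\ell/I_\ell,\mathcal{A}_{1,v,w}^{I_\ell}/\mathcal{K}_{v,w})\lra H^1(G_\ell/I_\ell,\mathcal{K}_{v,w}).$$
Lemma~\ref{lem:redatell1}(i) kills the second term and Lemma~\ref{lem:redatell1}(ii) kills the first. Hence it remains to prove that the Herbrand-style obstruction $H^1(G_\ell/I_\ell,\mathcal{K}_{v,w})$ vanishes as well.

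For this final step I would use the fact that $G_\ell/I_\ell\cong\widehat{\ZZ}$ is procyclic, generated by $\textup{Fr}_\ell$, together with the observation that $\mathcal{K}_{v,w}$ is a \emph{finite} $\oo$-module (since it is a subquotient of $\mathcal{A}_{u+1,v,w}^{I_\ell}$ which is a finite abelian $p$-group as $\mathcal{A}_{u+1,v,w}$ is of finite cardinality). For a finite module over a procyclic group, the standard fact $|M^{\textup{Fr}_\ell=1}|=|M/(\textup{Fr}_\ell-1)M|$ applied to the four-term sequence
$$0\lra \mathcal{K}_{v,w}^{\textup{Fr}_\ell=1}\lra \mathcal{K}_{v,w}\stackrel{\textup{Fr}_\ell-1}{\lra}\mathcal{K}_{v,w}\lra \mathcal{K}_{v,w}/(\textup{Fr}_\ell-1)\lra 0$$
combined with Lemma~\ref{lem:redatell1}(ii) (which gives $\mathcal{K}_{v,w}^{\textup{Fr}_\ell=1}=H^0(\QQ_\ell,\mathcal{K}_{v,w})=0$) shows that $\mathcal{K}_{v,w}/(\textup{Fr}_\ell-1)\mathcal{K}_{v,w}=0$, i.e., $H^1(G_\ell/I_\ell,\mathcal{K}_{v,w})=0$. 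Plugging this back into the long exact sequence yields $\ker(\beta)=H^0(G_\ell/I_\ell,\mathcal{A}_{1,v,w}^{I_\ell}/\mathcal{K}_{v,w})=0$.

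There is no serious obstacle here: the entire content has been pre-packaged by Lemma~\ref{lem:redatell1}, and the only non-cosmetic ingredient is the procyclic Herbrand-quotient equality, which depends crucially on the finiteness of $\mathcal{K}_{v,w}$. The one point one should be careful about is ensuring that the hypothesis (\textbf{H.Tam}) really feeds through to both vanishing statements of Lemma~\ref{lem:redatell1} at the appropriate level $(1,v,w)$ (and not only at $(1,1,1)$), but this is exactly what the inductive argument in the proof of that lemma delivers.
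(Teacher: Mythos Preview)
Your overall strategy matches the paper's, but there is a genuine gap at the finiteness step. You claim that $\mathcal{K}_{v,w}$ is a finite $\oo$-module because ``$\mathcal{A}_{u+1,v,w}$ is of finite cardinality''. This is false: by definition $\mathcal{A}_{u+1,v,w}=\TT_{u+1,v,w}\otimes_{\oo}\Phi/\oo$, and since $\TT_{u+1,v,w}$ is free of positive rank over $\oo$, the module $\mathcal{A}_{u+1,v,w}$ is a cofree $\oo$-module of positive corank, hence infinite. Consequently $\mathcal{A}_{u+1,v,w}^{I_\ell}$ and its subquotient $\mathcal{K}_{v,w}$ need not be finite, and the Herbrand-quotient equality $|\mathcal{K}_{v,w}^{\textup{Fr}_\ell=1}|=|\mathcal{K}_{v,w}/(\textup{Fr}_\ell-1)|$ is not available as stated.

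The paper repairs exactly this point: it observes that $\mathcal{K}_{v,w}$ is $\varpi^\infty$-torsion and that each layer $\mathcal{K}_{v,w}[\varpi^\alpha]$ \emph{is} finite (being contained in $\mathcal{A}_{u+1,v,w}[\varpi^\alpha]$, which is finite). One then runs the Herbrand-quotient argument on $\mathcal{K}_{v,w}[\varpi^\alpha]$ for each $\alpha$, using Lemma~\ref{lem:redatell1}(ii) to get $\mathcal{K}_{v,w}[\varpi^\alpha]^{\textup{Fr}_\ell=1}=0$, deduces $\mathcal{K}_{v,w}[\varpi^\alpha]/(\textup{Fr}_\ell-1)=0$, and passes to the direct limit over $\alpha$ to conclude $\mathcal{K}_{v,w}/(\textup{Fr}_\ell-1)=0$. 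With this correction your argument goes through and coincides with the paper's.
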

\begin{proof}
Taking the $G_{\QQ_\ell}/I_\ell$-invariance of the short exact sequence
$$0\lra \mathcal{K}_{v,w}\lra \mathcal{A}_{1,v,w}^{I_\ell} \lra \mathcal{A}_{1,v,w}^{I_\ell}/ \mathcal{K}_{v,w}\lra 0,$$
we conclude using Lemma~\ref{lem:redatell1} that
\be\label{eqn:controlkerbeta}\ker(\beta) \hookrightarrow H^1(G_{\QQ_\ell}/I_\ell,\mathcal{K}_{v,w})\cong \mathcal{K}_{v,w}/(\textup{Fr}_\ell-1) \mathcal{K}_{v,w}.\ee
Lemma~\ref{lem:redatell1}(ii) yields (using the fact that $ \mathcal{K}_{v,w}$ is $\varpi^\infty$-torsion) an exact sequence
 $$0\lra \mathcal{K}_{v,w}[\varpi^\alpha] \stackrel{\textup{Fr}_\ell-1}{\lra}  \mathcal{K}_{v,w}[\varpi^\alpha]\lra  \mathcal{K}_{v,w}[\varpi^\alpha]/(\textup{Fr}_\ell-1)\lra 0$$
for every $\alpha\in\ZZ^+$. Noting that the module $ \mathcal{K}_{v,w}[\varpi^\alpha]$ has finite cardinality, it follows now that $$\mathcal{K}_{v,w}[\varpi^\alpha]/(\textup{Fr}_\ell-1)=0.$$
 Passing to direct limit, Proposition follows by (\ref{eqn:controlkerbeta}).
\end{proof}
\begin{prop}
\label{prop:cartatelltt}
The local condition at a prime $\ell\neq p$, given by $\FFc$ on the collection $\textup{Quot}(\TT)$ is cartesian.
\end{prop}
\begin{proof}
(\textbf{C1.a}) holds true by definition and (\textbf{C1.b}) by Lemma~\ref{lem:comparefiniteunr}(iii). The parts of (\textbf{C2}) concerning the cases $2\leq i\leq 4$ follow from Lemma~\ref{lem:alphainj} and Proposition~\ref{prop:betainj}; the part concerning the case $i=1$ from \cite[Lemma 3.7.1]{mr02}.
\end{proof}
\subsubsection{Cartesian properties at $\ell \neq p$ for the coefficient ring $\frak{R}$}
Assume throughout this section that (\textbf{H.Tam}) holds true. Recall that $\frak{R}=\mathcal{R}[[\Gamma]]$ where $\mathcal{R}$ is a Gorenstein $\oo$-algebra of dimension $2$ with maximal ideal $\mm_{\RR}$.
\begin{lemma}
\label{lem:cartpfrakprepare1}$\,$
\begin{itemize}
\item[(i)] $H^1_{\FFc}(\QQ_p,\frak{T}_{r,u,v})=H^1_{\textup{ur}}(\QQ_p,\frak{T}_{r,u,v})$.
\item[(ii)] The sequence
$$0\lra H^1_{\FFc}(\QQ_\ell,\frak{T}_{r,u,v})\lra H^1(\QQ_\ell,\frak{T}_{r,u,v})\lra \frac{ H^1(\QQ_\ell,\frak{A}_{u,v})}{ H^1_{\textup{ur}}(\QQ_\ell,\frak{A}_{u,v})}$$
is exact.
\end{itemize}
\end{lemma}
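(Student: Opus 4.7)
The statement mirrors Lemma~\ref{lem:comparefiniteunr} but for the collection $\frak{C}$ attached to the ring $\frak{R}$, so my plan is to transport the same strategy and identify the one ingredient that needs a separate argument (the vanishing of $H^0(\QQ_\ell,\frak{A}_{u,v})$). Interpreting the $\QQ_p$ in (i) as the $\QQ_\ell$ dictated by the subsection heading (bad primes $\ell\neq p$), the two parts of the Lemma split cleanly: (ii) is a direct consequence of (\ref{eqn:caneqfinfrak}) together with the definition of $H^1_f(\QQ_\ell,\frak{T}_{r,u,v})$ as the kernel of the map to $H^1(\QQ_\ell,\frak{A}_{u,v})/H^1_{\textup{ur}}(\QQ_\ell,\frak{A}_{u,v})$ (once (i) is known, the unramified and finite conditions coincide on $\frak{A}_{u,v}$ as well), while (i) is the substantive assertion.

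For (i), I would apply \cite[Lemma 1.3.5]{r00} to get the two short exact sequences
\[
0\to H^1_f(\QQ_\ell,\frak{A}_{u,v})\to H^1_{\textup{ur}}(\QQ_\ell,\frak{A}_{u,v})\to \mathcal{W}/(\textup{Fr}_\ell-1)\mathcal{W}\to 0,
\]
\[
0\to H^1_{\textup{ur}}(\QQ_\ell,\frak{T}_{u,v})\to H^1_f(\QQ_\ell,\frak{T}_{u,v})\to \mathcal{W}^{\textup{Fr}_\ell=1}\to 0,
\]
where $\mathcal{W}=\frak{A}_{u,v}^{I_\ell}/(\frak{A}_{u,v}^{I_\ell})_{\textup{div}}$ is finite. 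So it suffices to show $\mathcal{W}^{\textup{Fr}_\ell=1}=0$; since $\mathcal{W}$ is finite, multiplication by $\textup{Fr}_\ell-1$ being injective forces the cokernel to vanish as well. I would then propagate to $\frak{T}_{r,u,v}$ exactly as in Lemma~\ref{lem:comparefiniteunr}(iii): $H^1_{\FFc}(\QQ_\ell,\frak{T}_{r,u,v})$ is sandwiched between the image of $H^1_{\textup{ur}}(\QQ_\ell,\frak{T}_{u,v})$ and the preimage of $H^1_{\textup{ur}}(\QQ_\ell,\frak{A}_{u,v})$ via (\ref{eqn:caneqfinfrak}), which forces equality with $H^1_{\textup{ur}}(\QQ_\ell,\frak{T}_{r,u,v})$.

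The crux is therefore the vanishing $\mathcal{W}^{\textup{Fr}_\ell=1}=0$, which I would deduce from $H^0(\QQ_\ell,\frak{A}_{u,v})=0$ by the same diagram chase as in Lemma~\ref{lem:technicalhelp}: take $G_\ell/I_\ell$-invariants of
\[
0\to (\frak{A}_{u,v}^{I_\ell})_{\textup{div}}\to \frak{A}_{u,v}^{I_\ell}\to \mathcal{W}\to 0,
\]
so that $\mathcal{W}^{\textup{Fr}_\ell=1}$ injects into $(\frak{A}_{u,v}^{I_\ell})_{\textup{div}}/(\textup{Fr}_\ell-1)$, then use the exact sequence for each $\varpi^\alpha$-torsion piece and pass to the direct limit. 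The vanishing of $H^0(\QQ_\ell,\frak{A}_{u,v})$ itself is the main obstacle and requires a double induction on $u$ and $v$. The base case is $\frak{A}_{1,1}$: since $\frak{T}_{1,1,1}[\mm]\cong\bar T$ by (\ref{eqn:gorzeromodm}) and (\textbf{H.Tam}) gives $H^0(\QQ_\ell,\bar T)=0$, \cite[Lemma 2.1.4]{mr02} (applied to the free $\RR_0$-module $\frak{T}_{1,1,1}$ over the Gorenstein artinian ring $\RR_0$) yields $H^0(\QQ_\ell,\frak{T}_{1,1,1})=0$, and hence $H^0(\QQ_\ell,\frak{A}_{1,1}[\varpi])=0$ and thus $H^0(\QQ_\ell,\frak{A}_{1,1})=0$.

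For the inductive step I would take $G_\ell$-cohomology of the short exact sequences
\[
0\to \frak{A}_{u,v}\stackrel{X}{\to}\frak{A}_{u+1,v}\to \frak{A}_{1,v}\to 0,\qquad 0\to \frak{A}_{u,v}\stackrel{\gamma-1}{\to}\frak{A}_{u,v+1}\to \frak{A}_{u,1}\to 0
\]
(whose exactness on the right follows from the freeness of $\mathcal{T}$ over $\RR$ and of $\frak{T}$ over $\frak{R}$ before passing to $\Phi/\oo$). This reduces $H^0(\QQ_\ell,\frak{A}_{u+1,v})=0$ to $H^0(\QQ_\ell,\frak{A}_{u,v})=0$ and $H^0(\QQ_\ell,\frak{A}_{1,v})=0$, and analogously in the $\gamma-1$ direction, so a double induction built on the base case settles everything. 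This is essentially the parallel of the chain of reductions in Lemma~\ref{lem:redatell1}, and the only extra care needed is the non-regularity of $\RR$, which is precisely where the structural input (\ref{eqn:gorzeromodm}) and \cite[Lemma 2.1.4]{mr02} are used.
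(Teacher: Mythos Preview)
Your proposal is correct and follows exactly the route the paper intends: its proof is the single line ``the proof of Lemma~\ref{lem:comparefiniteunr} above works verbatim,'' and you have faithfully transported that argument to the collection $\frak{C}$. You have in fact gone further than the paper by isolating the one place where ``verbatim'' is slightly optimistic: in the base case $\frak{A}_{1,1}[\varpi]=\frak{T}_{1,1,1}$ is not literally $\bar{T}$ when $\RR$ is non-regular, and you correctly invoke (\ref{eqn:gorzeromodm}) together with \cite[Lemma 2.1.4]{mr02} (exactly as in Claim~\ref{claim:vanishfrak1} and Lemma~\ref{lem:Q0Ielinv}) to pass from $H^0(\QQ_\ell,\bar T)=0$ to $H^0(\QQ_\ell,\frak{T}_{1,1,1})=0$; the rest of the d\'evissage then proceeds identically.
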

\begin{proof}
The proof of Lemma~\ref{lem:comparefiniteunr} above works verbatim.
\end{proof}
\begin{lemma}
\label{lem:cartpfrakprepare2}
In the commutative diagram
$$\xymatrix{0\ar[r]& H^1_{\FFc}(\QQ_\ell,\frak{T}_{r,u,v})\ar[r]\ar[d]& H^1(\QQ_\ell,\frak{T}_{r,u,v})\ar[r]\ar[d]&\frac{H^1(\QQ_\ell,\frak{A}_{u,v})}{H^1_{\textup{ur}}(\QQ_\ell,\frak{A}_{u,v})}\ar[d]^{\alpha}\\
0\ar[r]& H^1_{\FFc}(\QQ_\ell,\frak{T}_{r,u+1,v})\ar[r]& H^1(\QQ_\ell,\frak{T}_{r,u+1,v})\ar[r]&\frac{H^1(\QQ_\ell,\frak{A}_{u+1,v})}{H^1_{\textup{ur}}(\QQ_\ell,\frak{A}_{u+1,v})} }$$
the map $\alpha$ is injective.

\end{lemma}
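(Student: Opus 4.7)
The plan is to mirror the argument used for Lemma~\ref{lem:alphainj} and Proposition~\ref{prop:betainj} over the coefficient ring $R$, adapting it to the ring $\frak{R}$. First, I would use the Hochschild--Serre spectral sequence together with Lemma~\ref{lem:cartpfrakprepare1}(i) to write the commutative diagram whose rows are
\[
0\lra \frac{H^1(\QQ_\ell,\frak{A}_{u,v})}{H^1_{\textup{ur}}(\QQ_\ell,\frak{A}_{u,v})}\hookrightarrow H^1(I_\ell,\frak{A}_{u,v})^{\textup{Fr}_\ell=1},
\]
and analogously with $u+1$ in place of $u$. Exactly as in the proof of Lemma~\ref{lem:alphainj}, this reduces the injectivity of $\alpha$ to the injectivity of the map
\[
\beta\colon H^1(I_\ell,\frak{A}_{u,v})^{\textup{Fr}_\ell=1}\lra H^1(I_\ell,\frak{A}_{u+1,v})^{\textup{Fr}_\ell=1}.
\]

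Next, I would analyze $\ker(\beta)$ using the short exact sequence $0\ra \frak{A}_{u,v}\stackrel{[X]}{\ra} \frak{A}_{u+1,v}\ra \frak{A}_{1,v}\ra0$. Taking $I_\ell$-cohomology and setting $\mathcal{K}_v=\frak{A}_{u+1,v}^{I_\ell}/\frak{A}_{u,v}^{I_\ell}$, one gets the exact sequence
\[
0\lra \frak{A}_{1,v}^{I_\ell}/\mathcal{K}_v\lra H^1(I_\ell,\frak{A}_{u,v})\lra H^1(I_\ell,\frak{A}_{u+1,v}),
\]
so that $\ker(\beta)\cong H^0(G_\ell/I_\ell,\frak{A}_{1,v}^{I_\ell}/\mathcal{K}_v)$. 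Taking $G_\ell/I_\ell$-invariants of the exact sequence $0\ra \mathcal{K}_v\ra \frak{A}_{1,v}^{I_\ell}\ra \frak{A}_{1,v}^{I_\ell}/\mathcal{K}_v\ra 0$ and using that $H^0(\QQ_\ell,\mathcal{K}_v)=0$ (whose proof is outlined below), I would embed $\ker(\beta)$ into $H^1(G_\ell/I_\ell,\mathcal{K}_v)\cong \mathcal{K}_v/(\textup{Fr}_\ell-1)$. Since $\mathcal{K}_v$ is $\varpi^\infty$-torsion and each $\mathcal{K}_v[\varpi^\alpha]$ is finite with trivial $\textup{Fr}_\ell$-invariants, the standard finite-group argument (as at the end of the proof of Proposition~\ref{prop:betainj}) shows $\mathcal{K}_v[\varpi^\alpha]/(\textup{Fr}_\ell-1)=0$ for every $\alpha$, and passing to the direct limit gives $\ker(\beta)=0$.

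The key input is the vanishing $H^0(\QQ_\ell,\frak{A}_{1,v})=0$, from which the vanishing $H^0(\QQ_\ell,\mathcal{K}_v)=0$ follows since $\mathcal{K}_v\hookrightarrow \frak{A}_{1,v}$. To prove this, I would induct on $v$ using the exact sequences $0\ra \frak{A}_{1,v-1}\stackrel{[\gamma-1]}{\lra}\frak{A}_{1,v}\ra \frak{A}_{1,1}\ra 0$, reducing to the base case $H^0(\QQ_\ell,\frak{A}_{1,1})=0$. Since $\frak{A}_{1,1}[\varpi]=\frak{T}_{1,1,1}$, this base case further reduces to showing $H^0(\QQ_\ell,\frak{T}_{1,1,1})=0$. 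Here $\frak{T}_{1,1,1}$ is a free module over the Gorenstein artinian ring $\RR_0=\RR/(\varpi,X)$, so \cite[Lemma~2.1.4]{mr02} (as already invoked in the proof of Claim~\ref{claim:vanishfrak1}) reduces this further to the vanishing $H^0(\QQ_\ell,\frak{T}_{1,1,1}[\mm])=H^0(\QQ_\ell,\bar{T})=0$, which is exactly hypothesis (\textbf{H.Tam})(i).

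The main technical hurdle in carrying this out is the last reduction, since $\RR$ is only assumed Gorenstein (not regular), and the usual filtration arguments used for $R$-coefficients break down for $\frak{R}$-coefficients. The remedy is to exploit the Gorenstein property of $\RR_0$ via \cite[Lemma~2.1.4]{mr02}, which provides precisely the equivalence between $G_\ell$-invariants of $\frak{T}_{1,1,1}$ and of its socle $\bar{T}$; with this in hand the rest of the argument proceeds in the same fashion as the $R$-coefficient case treated in Proposition~\ref{prop:betainj}.
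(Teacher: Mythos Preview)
Your proposal is correct and follows exactly the route the paper intends: the paper's proof reads simply ``Identical to the proof of Proposition~\ref{prop:betainj},'' and you have faithfully unwound what that identical argument looks like over $\frak{R}$. You have also correctly isolated the one genuine extra wrinkle, namely that the base case $H^0(\QQ_\ell,\frak{A}_{1,1})=0$ reduces to $H^0(\QQ_\ell,\frak{T}_{1,1,1})=0$, which in turn requires the Gorenstein property of $\RR_0$ and \cite[Lemma~2.1.4]{mr02} (as in the proof of Claim~\ref{claim:vanishfrak1} and Lemma~\ref{lem:Q0Ielinv}) rather than a direct identification with $\bar{T}$; this is precisely the adaptation the paper has in mind.
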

\begin{proof}
Identical to the proof of Proposition~\ref{prop:betainj}.
\end{proof}
Recall the ring $\frak{O}$ and the module ${T}_\frak{O}$ from \S\ref{subsec:notationhypo}.
\begin{prop}
\label{prop:D4}$\,$
\begin{itemize}
\item[(i)] $H^1_f(\QQ_\ell,{T}_\frak{O})=H^1_{\textup{ur}}(\QQ_\ell,{T}_{\frak{O}})$, where $$H^1_f(\QQ_\ell,{T}_\frak{O})=\ker\left(H^1(\QQ_\ell,{T}_\frak{O})\ra H^1(I_\ell,{T}_\frak{O}\otimes\QQ_p)\right).$$
\item[(ii)] $H^1_{\FFc}(\QQ_\ell,\frak{T}_{1,1})=H^1_{\textup{ur}}(\QQ_\ell,\frak{T}_{1,1})$.
\item[(iii)]$H^1_{\FFc}(\QQ_\ell,\frak{T}_{1,1,1})=H^1_{\textup{ur}}(\QQ_\ell,\frak{T}_{1,1,1})$.
\item[(iv)] $H^1_{\FFc}(\QQ_\ell,\overline{T})=\textup{im}\left(H^1_{\textup{ur}}(\QQ_\ell,{T}_{\frak{O}})\ra H^1(\QQ_\ell,\overline{T})\right)=H^1_{\textup{ur}}(\QQ_\ell,\overline{T})$.
\item[(v)] $H^1_{\textup{ur}}(\QQ_\ell,\overline{T})$ is the inverse image of $H^1_{\textup{ur}}(\QQ_\ell,\frak{T}_{1,1})[\mm]$ under the map induced from \textup{(\ref{eqn:gorzeromodm})}.
\end{itemize}
\end{prop}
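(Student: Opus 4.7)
The five items will be proved in sequence, with (ii) feeding into (iii) and (i)--(iii) together feeding into (iv) and (v). For (i) and (ii), I plan to mimic the proof of Lemma~\ref{lem:comparefiniteunr}(ii) essentially verbatim, substituting $T_\frak{O}$ (resp.\ $\frak{T}_{1,1}$) for $\TT_{u,v,w}$. The exact sequences of \cite[Lemma~1.3.5]{r00} reduce the comparison $H^1_f=H^1_{\textup{ur}}$ to the vanishing of the module $\mathcal{W}:=A^{I_\ell}/(A^{I_\ell})_{\textup{div}}$. The hypothesis (\textbf{H.Tam})(ii) applies directly: it asserts $A^{I_\ell}=H^0(I_\ell,A)$ is $p$-divisible, which forces $\mathcal{W}=0$ outright. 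The isomorphism $A\cong\frak{A}_{1,1}$ recorded in \S\ref{subsec:notationhypo} is what allows the same hypothesis to cover both cases; note also that by definition $H^1_{\FFc}=H^1_f$ at $\ell\neq p$, so (ii) follows from the $\frak{T}_{1,1}$-version.

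For (iii), since $\FFc$ on $\frak{T}_{1,1,1}=\frak{T}_{1,1}/\varpi$ is defined as the propagation of $\FFc$ on $\frak{T}_{1,1}$, combining with (ii) identifies $H^1_{\FFc}(\QQ_\ell,\frak{T}_{1,1,1})$ as the image of $H^1_{\textup{ur}}(\QQ_\ell,\frak{T}_{1,1})$ in $H^1(\QQ_\ell,\frak{T}_{1,1,1})$. The containment of this image inside $H^1_{\textup{ur}}(\QQ_\ell,\frak{T}_{1,1,1})$ is automatic by functoriality; for the reverse I would invoke \cite[Lemma~3.7.1]{mr02}, exactly as it was used to verify (\textbf{C3}) in Proposition~\ref{prop:cartatelltt}. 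The key input there is the vanishing $H^0(\QQ_\ell,\bar{T})=0$, supplied by (\textbf{H.Tam})(i).

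Parts (iv) and (v) are then obtained by assembling (i)--(iii). For (iv), the definition of $\FFc$ on $\bar{T}=\frak{T}/\mm$ gives $H^1_{\FFc}(\QQ_\ell,\bar{T})$ as the image of $H^1_{\textup{ur}}(\QQ_\ell,\frak{T}_{1,1})$ in $H^1(\QQ_\ell,\bar{T})$. To reconcile this with the image from $H^1_{\textup{ur}}(\QQ_\ell,T_\frak{O})$, I would run a diagram chase on the short exact sequence $0\to\frak{T}_{1,1}\to T_\frak{O}\to Q\to 0$ (with $Q$ finite since $\frak{O}/\RR_1$ is): parts (i) and (ii) guarantee compatibility of the two unramified conditions under this inclusion, and the finite quotient $Q$ disappears upon further reduction into $\bar{T}$-cohomology. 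The final equality with $H^1_{\textup{ur}}(\QQ_\ell,\bar{T})$ is then the cartesian property in the uniformizer direction for the lattice $T_\frak{O}$ reducing to $\bar{T}$---again \cite[Lemma~3.7.1]{mr02}, with $H^0(\QQ_\ell,\bar{T})=0$ from (\textbf{H.Tam})(i). Item (v) follows by unwinding the socle inclusion $\bar{T}\hookrightarrow\frak{T}_{1,1,1}$ induced by \eqref{eqn:gorzeromodm}: the inverse-image assertion reduces to the injectivity of $H^1(I_\ell,\bar{T})\hookrightarrow H^1(I_\ell,\frak{T}_{1,1,1})$, which holds by iterating $H^0(I_\ell,\bar{T})=0$ along the socle filtration of $\frak{T}_{1,1,1}$ as a free $\RR_0$-module.

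The principal technical obstacle is part (iv): pinning down equality---not merely one-sided containment---between the images from $\frak{T}_{1,1}$- and $T_\frak{O}$-cohomology requires both the cartesian property (i) and careful bookkeeping of the finite cokernel $Q$ under the mod-$\pi_\frak{O}$ reduction to $\bar{T}$. Once (iv) is in place, (v) is a near-formal unwinding of the socle identification.
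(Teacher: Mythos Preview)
Your treatment of (i)--(iii) is fine and matches the paper: the point is that $\mathcal{W}=A^{I_\ell}/(A^{I_\ell})_{\textup{div}}=0$ directly from (\textbf{H.Tam})(ii), so \cite[Lemma~1.3.5]{r00} collapses, and (iii) is then the analogue of Lemma~\ref{lem:comparefiniteunr}(iii).

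The gap is in (v). You reduce to the injectivity of $H^1(I_\ell,\bar T)\to H^1(I_\ell,\frak{T}_{1,1,1})$ and justify it by ``iterating $H^0(I_\ell,\bar T)=0$ along the socle filtration''. But (\textbf{H.Tam})(i) gives $H^0(\QQ_\ell,\bar T)=0$, not $H^0(I_\ell,\bar T)=0$; for a bad prime $\ell$ the inertia invariants $\bar T^{I_\ell}$ are typically nonzero, so your induction does not start. What one actually needs (and what suffices for the diagram chase) is only injectivity after taking $\textup{Fr}_\ell$-invariants:
\[
H^1(I_\ell,\bar T)^{\textup{Fr}_\ell=1}\hookrightarrow H^1(I_\ell,\frak{T}_{1,1})^{\textup{Fr}_\ell=1}.
\]
The paper obtains this by writing the kernel as $\mathcal{Q}_0^{\textup{Fr}_\ell=1}$ for a suitable subquotient $\mathcal{Q}_0$ of $\frak{T}$, and then observing via \cite[Lemma~2.1.4]{mr02} that $H^0(\QQ_\ell,\bar T)=0$ forces $S^{G_\ell}=0$ for every subquotient $S$, hence $\mathcal{Q}_0^{\textup{Fr}_\ell=1}=0$. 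Your socle-filtration idea is salvageable, but it must be run on $G_\ell$-invariants (equivalently, on $\textup{Fr}_\ell$-fixed points of $I_\ell$-cohomology), not on $I_\ell$-invariants alone.

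A smaller point on (iv): your ``final equality'' $\textup{im}\big(H^1_{\textup{ur}}(\QQ_\ell,T_\frak{O})\to H^1(\QQ_\ell,\bar T)\big)=H^1_{\textup{ur}}(\QQ_\ell,\bar T)$ is a \emph{surjectivity} statement, not a cartesian (inverse-image) statement, so \cite[Lemma~3.7.1]{mr02} together with (\textbf{H.Tam})(i) is not quite the right citation. The paper instead proves directly that $H^0(I_\ell,T_\frak{O})\twoheadrightarrow H^0(I_\ell,\bar T)$ (and likewise for $\frak{T}_{1,1}$): the cokernel lands in $H^1(I_\ell,T_\frak{O})_{\textup{tors}}\cong A^{I_\ell}/(A^{I_\ell})_{\textup{div}}$, which vanishes by (\textbf{H.Tam})(ii). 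So (\textbf{H.Tam})(ii) is doing real work here that your sketch does not account for. Your intermediate comparison of the two images via $0\to\frak{T}_{1,1}\to T_\frak{O}\to Q\to 0$ can be made to work (indeed $H^1_{\textup{ur}}(\QQ_\ell,Q)=0$ since $Q^{G_\ell}=0$ and $Q$ is finite), but it is then redundant once you have the direct surjectivity argument for either lattice.
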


\begin{proof}
(i) and (ii) follows from \cite[Lemma 1.3.5]{r00} since we assumed (\textbf{H.Tam}), and (iii) follows mimicking the proof of Lemma~\ref{lem:comparefiniteunr}(iii). We next verify (iv). By the very definition of $H^1_{\FFc}(\QQ_\ell,\overline{T})$ (see the beginning of \S\ref{sec:selmerstr}),
$$H^1_{\FFc}(\QQ_\ell,\overline{T})=\textup{im}\left(H^1_{\FFc}(\QQ_\ell,\frak{T}_{1,1})\ra H^1(\QQ_\ell,\overline{T})\right)=\textup{im}\left(H^1_{\textup{ur}}(\QQ_\ell,\frak{T}_{1,1})\ra H^1(\QQ_\ell,\overline{T})\right),$$
where the second equality is thanks to (i). Thus, the assertion (iv) amounts to the statements
\be\label{eqn:unrunrbar1} \textup{im}\left(H^1_{\textup{ur}}(\QQ_\ell,\frak{T}_{1,1})\ra H^1(\QQ_\ell,\overline{T})\right)=H^1_{\textup{ur}}(\QQ_\ell,\overline{T}),\ee
\be\label{eqn:unrunrbar2} \textup{im}\left(H^1_{\textup{ur}}(\QQ_\ell,{T}_{\frak{O}})\ra H^1(\QQ_\ell,\overline{T})\right)=H^1_{\textup{ur}}(\QQ_\ell,\overline{T}). \ee

In order to verify (\ref{eqn:unrunrbar1}), it suffices to check that we have a surjection
$$H^0(I_\ell,\frak{T}_{1,1})\twoheadrightarrow H^0(I_\ell,\overline{T})$$
as $G_{\QQ_\ell}/I_\ell$ has cohomological dimension 1. Taking the $I_\ell$-invariance of the exact sequence
\be\label{eqn:mmexactbar}0\lra \mm_\RR \frak{T}_{1,1}\lra \frak{T}_{1,1}\lra \overline{T}\lra 0\ee
we see that
$$\textup{coker}\left(H^0(I_\ell,\frak{T}_{1,1})\lra H^0(I_\ell,\overline{T})\right)\hookrightarrow H^1(I_\ell,\mm_{\RR}\frak{T}_{1,1}).$$
As the module $H^0(I_\ell,\overline{T})$ is of finite order, the image of the injection above lands in the $\ZZ_p$-torsion submodule $H^1(I_\ell,\mm_{\RR}\frak{T}_{1,1})_{\textup{tors}}$ of $H^1(I_\ell,\mm_{\RR}\frak{T}_{1,1})$. On the other hand,
$$H^1(I_\ell,\mm_{\RR}\frak{T}_{1,1})_{\textup{tors}}\cong (\mm\frak{T}_{1,1}\otimes\QQ_p/\ZZ_p)^{I_{\ell}}/\textup{div}=A^{I_\ell}/\textup{div}=0$$
where
\begin{itemize}
\item $M/\textup{div}$ is short for $M/M_{\textup{div}}$;
\item the second equality is obtained tensoring the exact sequence (\ref{eqn:mmexactbar}) by $\QQ_p/\ZZ_p$ and noting that the exactness is preserved as $\mm_{\RR}\frak{T}_{1,1}$ is $\ZZ_p$-torsion free, and that $\overline{T}\otimes\QQ_p/\ZZ_p=0$;
\item the last equality is (\textbf{H.Tam}).
\end{itemize}
This shows that
$$\textup{coker}\left(H^0(I_\ell,\frak{T}_{1,1})\lra H^0(I_\ell,\overline{T})\right)=0$$
as desired and (\ref{eqn:unrunrbar1}) is verified.

To verify (\ref{eqn:unrunrbar2}), it again suffices to check that
$$\textup{coker}\left(H^0(I_\ell,T_{\frak{O}})\lra H^0(I_\ell,\overline{T})\right).$$
Considering the $I_\ell$-invariance of the exact sequence
$$0\lra T_\frak{O}\stackrel{\pi_\frak{O}}\lra T_{\frak{O}}\lra \overline{T}\lra 0$$
we see that
$$\textup{coker}\left(H^0(I_\ell,T_{\frak{O}})\lra H^0(I_\ell,\overline{T})\right)\hookrightarrow H^1(I_\ell,{T}_{\frak{O}})_{\textup{tors}}.$$
As above, $H^1(I_\ell,{T}_{\frak{O}})_{\textup{tors}}\cong A^{I_\ell}/\textup{div}=0$ and this completes the proof of (iv).

We now prove (v). Consider the sequence
\be\label{eqn:exactbarTQ}0\lra \overline{T}\lra \frak{T}_{1,1} \lra \mathcal{Q}\lra 0\ee
where the arrow $\overline{T}\ra \frak{T}_{1,1}$ is obtained from (\ref{eqn:gorzeromodm}) and $\mathcal{Q}$ is defined by the exactness of this sequence. Taking the $I_\ell$-invariance of the sequence (\ref{eqn:exactbarTQ}), we obtain another exact sequence
$$0\lra \mathcal{Q}_0 \lra H^1(I_\ell,\overline{T}) \lra H^1(I_\ell,\frak{T}_{1,1})$$
where $\mathcal{Q}_0:=\mathcal{Q}^{I_\ell}\Big{/}\frak{T}_{1,1}^{I_\ell}/\overline{T}^{I_\ell}$. Taking the $G_{\QQ_\ell}/I_\ell$-invariance of the final exact sequence, we conclude that
$$\ker\left(H^1(I_\ell,\overline{T})^{\textup{Fr}_\ell=1} \lra H^1(I_\ell,\frak{T}_{1,1})^{\textup{Fr}_\ell=1}\right)=\mathcal{Q}_0^{\textup{Fr}_\ell=1}$$
hence by Lemma~\ref{lem:Q0Ielinv} below that
\be\label{eqn:usefulinjIell1} \ker\left(H^1(I_\ell,\overline{T})^{\textup{Fr}_\ell=1} \lra H^1(I_\ell,\frak{T}_{1,1})^{\textup{Fr}_\ell=1}\right)=0.\ee

Consider now the commutative diagram
$$\xymatrix{0\ar[r]& H^1_{\textup{ur}}(\QQ_\ell,\overline{T})\ar[r]\ar[d]&H^1(\QQ_\ell,\overline{T})\ar[r]\ar[d]&H^1(I_\ell,\overline{T})^{\textup{Fr}_\ell=1}\ar[r]\ar[d]^{\varphi}&0\\
0\ar[r]& H^1_{\textup{ur}}(\QQ_\ell,\frak{T}_{1,1})\ar[r]&H^1(\QQ_\ell,\frak{T}_{1,1})\ar[r]&H^1(I_\ell,\frak{T}_{1,1})^{\textup{Fr}_\ell=1}\ar[r]&0
}$$
(\ref{eqn:usefulinjIell1}) shows that $\varphi$ is injective, and a simple diagram chase yields
$$ H^1_{\textup{ur}}(\QQ_\ell,\overline{T})=\ker\left( H^1(\QQ_\ell,\overline{T})\lra H^1(\QQ_\ell,\frak{T}_{1,1})/H^1_{\textup{ur}}(\QQ_\ell,\frak{T}_{1,1})\right)$$
which is a restatement of (v).
\end{proof}

\begin{lemma}
\label{lem:Q0Ielinv}
$\mathcal{Q}_0^{\textup{Fr}_\ell=1}=0$
\end{lemma}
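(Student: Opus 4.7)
The plan is to reduce the vanishing of $\mathcal{Q}_0^{\textup{Fr}_\ell=1}$ to that of $\mathcal{Q}^{G_\ell}$ (modulo routine Euler-characteristic bookkeeping for $\widehat{\ZZ}$-cohomology on finite modules), and to obtain the latter by devissage along the $\mm_\RR$-adic filtration of $\frak{T}_{1,1}$, using the hypothesis \textbf{H.Tam}(i).

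First I would exploit the fact that $\frak{T}_{1,1}$ is free of finite rank $d$ over the relevant Gorenstein Artinian local coefficient ring and that, by (\ref{eqn:gorzeromodm}), the socle $\frak{T}_{1,1}[\mm_\RR]$ is exactly the image of $\bar{T}$ under the arrow appearing in (\ref{eqn:exactbarTQ}). The $\mm_\RR$-adic filtration $\frak{T}_{1,1}\supset\mm_\RR\frak{T}_{1,1}\supset\mm_\RR^2\frak{T}_{1,1}\supset\cdots$ then has graded pieces $\mm_\RR^i\frak{T}_{1,1}/\mm_\RR^{i+1}\frak{T}_{1,1}\cong(\mm_\RR^i/\mm_\RR^{i+1})\otimes_{\texttt{k}}\bar{T}$, each of which is a direct sum of copies of $\bar{T}$ as a $G_\ell$-module. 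This filtration descends to one on $\mathcal{Q}$ whose graded pieces are again direct sums of copies of $\bar{T}$. Combined with \textbf{H.Tam}(i), which gives $\bar{T}^{G_\ell}=0$, descending induction along the long exact sequences of $G_\ell$-cohomology attached to these filtrations yields $\frak{T}_{1,1}^{G_\ell}=0$ and $\mathcal{Q}^{G_\ell}=0$.

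Next I would take $G_\ell/I_\ell$-invariants of the defining short exact sequence
\[0\lra\frak{T}_{1,1}^{I_\ell}/\bar{T}^{I_\ell}\lra\mathcal{Q}^{I_\ell}\lra\mathcal{Q}_0\lra 0.\]
Since $(\mathcal{Q}^{I_\ell})^{\textup{Fr}_\ell=1}=\mathcal{Q}^{G_\ell}=0$, the resulting four-term exact sequence produces an injection
\[\mathcal{Q}_0^{\textup{Fr}_\ell=1}\hookrightarrow H^1\!\left(\textup{Fr}_\ell,\frak{T}_{1,1}^{I_\ell}/\bar{T}^{I_\ell}\right).\]
Applying the same functor to $0\to\bar{T}^{I_\ell}\to\frak{T}_{1,1}^{I_\ell}\to\frak{T}_{1,1}^{I_\ell}/\bar{T}^{I_\ell}\to 0$ and using $\bar{T}^{G_\ell}=\frak{T}_{1,1}^{G_\ell}=0$ gives an injection $(\frak{T}_{1,1}^{I_\ell}/\bar{T}^{I_\ell})^{\textup{Fr}_\ell=1}\hookrightarrow H^1(\textup{Fr}_\ell,\bar{T}^{I_\ell})$. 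Since $G_\ell/I_\ell\cong\widehat{\ZZ}$ is topologically generated by $\textup{Fr}_\ell$ and acts on the finite module $\bar{T}^{I_\ell}$, the Euler-characteristic identity $|H^1|=|H^0|$ yields $|H^1(\textup{Fr}_\ell,\bar{T}^{I_\ell})|=|\bar{T}^{G_\ell}|=1$, so $(\frak{T}_{1,1}^{I_\ell}/\bar{T}^{I_\ell})^{\textup{Fr}_\ell=1}=0$; one more application of the same identity to the finite module $\frak{T}_{1,1}^{I_\ell}/\bar{T}^{I_\ell}$ then gives $H^1(\textup{Fr}_\ell,\frak{T}_{1,1}^{I_\ell}/\bar{T}^{I_\ell})=0$, and hence $\mathcal{Q}_0^{\textup{Fr}_\ell=1}=0$.

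The only non-formal ingredient is the devissage, which rests on the freeness of $\frak{T}_{1,1}$ over its Gorenstein Artinian coefficient ring together with the socle identification coming from (\ref{eqn:gorzeromodm}); once that is in hand, everything else is a mechanical manipulation of long exact sequences and the $\widehat{\ZZ}$-Euler characteristic. I expect the devissage to be the main point requiring care.
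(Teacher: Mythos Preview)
Your argument is correct and rests on the same core idea as the paper's: the $\mm_{\RR}$-adic d\'evissage showing that $G_\ell$-invariants vanish on the relevant modules once they vanish on $\bar{T}$. The difference is one of packaging. The paper observes that $\mathcal{Q}_0$ is itself an $\frak{R}$-subquotient of $\frak{T}$ (as a $G_\ell$-module: $\mathcal{Q}_0$ is a quotient of $\mathcal{Q}^{I_\ell}$, which is a submodule of $\mathcal{Q}$, which is a quotient of $\frak{T}_{1,1}$), and then invokes the proof of \cite[Lemma~2.1.4]{mr02} to conclude $\mathcal{Q}_0^{G_\ell}=0$ in one stroke; since $I_\ell$ already acts trivially on $\mathcal{Q}_0$, this is exactly $\mathcal{Q}_0^{\textup{Fr}_\ell=1}=0$. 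You instead run the d\'evissage only on $\frak{T}_{1,1}$ and $\mathcal{Q}$, and then reach $\mathcal{Q}_0$ by two further layers of long exact sequences for $\widehat{\ZZ}$-cohomology together with the Euler-characteristic identity $|H^1|=|H^0|$ on finite modules.

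Both routes are valid. The paper's is shorter and more conceptual; yours is more explicit and avoids the (slightly delicate) claim that the d\'evissage principle passes to arbitrary subquotients, replacing it with concrete finite-module bookkeeping. One small point of care in your write-up: the claim that the induced filtration on $\mathcal{Q}$ has graded pieces which are direct sums of copies of $\bar{T}$ is correct here precisely because $\bar{T}$ sits inside $\frak{T}_{1,1}$ as the socle $\frak{T}_{1,1}[\mm_\RR]=\mm_\RR^{n-1}\frak{T}_{1,1}$ (the bottom step of the filtration), so the quotient filtration simply drops that last step; it would be worth saying this explicitly, since for a generic quotient the graded pieces need only be \emph{quotients} of copies of $\bar{T}$, which is not enough to conclude vanishing of $G_\ell$-invariants.
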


\begin{proof}
As $\overline{T}^{G_{\QQ_\ell}}=0$, it follows by the proof of \cite[Lemma 2.1.4]{mr02} that $S^{G_{\QQ_\ell}}=0$ for any subquotient $S$ of $\frak{T}$, in particular for $S=\mathcal{Q}_0$.
\end{proof}
\begin{prop}
\label{prop:cartatell}
The local condition at a prime $\ell\neq p$, given by $\FFc$ on the collection $\textup{Quot}(\frak{T})$ is cartesian.
\end{prop}
\begin{proof}
One verifies (\textbf{D1}) using Lemma~\ref{lem:cartpfrakprepare1}, (\textbf{D2}) using Lemma~\ref{lem:cartpfrakprepare2} and  \cite[Lemma 3.7.1]{mr02}. (\textbf{D3}) follows from Proposition~\ref{prop:D4}(i) and Proposition~\ref{prop:D4}(iv).
\end{proof}
\subsubsection{Cartesian properties for the transverse condition} Recall the partial order $\prec$ from Definition~\ref{def:partialorder} on the quadruples (resp., on the triples) of positive integers.
\begin{prop}
\label{prop:carttransverse}
For $\bar{\frak{n}}_0=(r_0,u_0,v_0,w_0)$, suppose $\ell \in \PP_{\bar{\frak{n}}}$ is a Kolyvagin prime in the sense of Definition~\ref{def:kolyprimes}. Then the transverse local condition at $\ell$ is Cartesian on the family  $\{\TT_{\bar{\frak{n}}}\}_{_{\bar{\frak{n}}\prec\bar{\frak{n}}_{0}}}$
$($resp., on the family $\{\frak{T}_{\bar{\frak{n}}}\}_{_{\bar{\frak{n}}\prec \bar{\frak{n}}_0}}\cup\{\frak{T}/\mm\}$$)$.
\end{prop}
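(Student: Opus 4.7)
My plan is to derive every Cartesian axiom from an elementary injectivity statement about multiplication maps on the coefficient rings, using the finite-singular direct-sum decomposition as the main reduction.

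The first step is to invoke Lemma~\ref{lem:transverseproperties}, which yields a functorial splitting
\[
H^1(\QQ_\ell, T) \;=\; H^1_f(\QQ_\ell, T) \;\oplus\; H^1_{\textup{tr}}(\QQ_\ell, T),
\]
for every $T$ in the relevant family. Because the splitting is functorial, the maps induced by $[X_i]$, $[\gamma-1]$, $[\varpi]$, and the various reduction maps are block-diagonal with respect to the decomposition. By Proposition~\ref{prop:transversefurtherprop}(iii), each summand is free of rank one over the coefficient ring, and Proposition~\ref{prop:transversefurtherprop}(i) identifies $H^1_f(\QQ_\ell, T_{\bar{\frak{n}}})$ with $T_{\bar{\frak{n}}}/(\textup{Fr}_\ell-1)T_{\bar{\frak{n}}}$. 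Hypothesis (\textbf{H2}) together with the fact that $\textup{Fr}_\ell$ is conjugate to $\tau$ in $\textup{Gal}(L_{\bar{\frak{n}}}/\QQ)$ identifies this quotient canonically with $R_{\bar{\frak{n}}}$ (respectively with $\frak{R}_{\bar{\frak{s}}}$, or with $\texttt{k}$ for $\frak{T}/\mm$); under these identifications the action of $[X_i]$ on $H^1_f$ becomes the ring map $R_{\bar{\frak{n}}}\xrightarrow{[X_i]} R_{\bar{\frak{n}}'}$, and similarly for $[\gamma-1]$ and $[\varpi]$.

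Granting these identifications, the weak-functoriality clauses (C1) and (D1) are immediate from surjectivity of the ring quotients combined with the block-diagonal structure of the reduction maps. The core Cartesian clauses (C2), (C3), (D2), (D3) reduce, in turn, to the following statement: if $c\in H^1(\QQ_\ell, T_{\bar{\frak{n}}})$ satisfies $[X_i](c)\in H^1_{\textup{tr}}(\QQ_\ell, T_{\bar{\frak{n}}'})$, then $c\in H^1_{\textup{tr}}(\QQ_\ell, T_{\bar{\frak{n}}})$. Writing $c=c_f+c_{\textup{tr}}$ and using functoriality of the splitting, $[X_i](c_f)$ must lie in $H^1_f(\QQ_\ell, T_{\bar{\frak{n}}'})\cap H^1_{\textup{tr}}(\QQ_\ell, T_{\bar{\frak{n}}'})=0$. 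The previous paragraph then reduces the vanishing of $c_f$ to the injectivity of the ring maps $R_{\bar{\frak{n}}}\xrightarrow{[X_i], [\varpi]} R_{\bar{\frak{n}}'}$, which follows from the fact that $\{\varpi, X_1, X_2, X_3\}$ is a regular sequence in $R$. In the $\frak{R}$-case the analogous injectivity uses that $\{\varpi, X, \gamma-1\}$ forms a regular sequence in $\frak{R}\cong \RR[[T]]$ (with $T=\gamma-1$), which is immediate from $\{\varpi, X\}$ being a regular sequence in the Gorenstein ring $\RR$ combined with the observation that $T$ is a non-zero-divisor in $\RR_0[[T]]$.

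For axiom (D4) the new ingredient is the injection $\frak{T}/\mm\hookrightarrow \frak{T}_{1,1,1}$ induced by the socle inclusion $\texttt{k}\cong \RR_0[\mm_{\RR}]\hookrightarrow \RR_0$ from (\ref{eqn:gorzeromodm}); because $\mathcal{T}$ is $\RR$-free, this remains an inclusion after tensoring, so the same decomposition argument goes through, the relevant injectivity this time being that of the socle embedding itself. The main technical obstacle I anticipate is confirming that the identification of the $[X_i]$-action on $H^1_f$ with the corresponding ring map is genuinely canonical and independent of auxiliary choices; this should fall out cleanly from the explicit form of the isomorphism in Proposition~\ref{prop:transversefurtherprop}(i) together with the $\RR$-freeness of $\mathcal{T}/(\tau-1)\mathcal{T}$ supplied by (\textbf{H2}).
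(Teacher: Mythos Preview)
Your proposal is correct and follows essentially the same approach as the paper: both arguments rest on the functorial finite/transverse splitting of Lemma~\ref{lem:transverseproperties} together with the rank-one freeness of each summand (Proposition~\ref{prop:transversefurtherprop}), reducing the Cartesian axioms to injectivity of the multiplication maps on the finite part. The only cosmetic difference is that the paper phrases the reduction via the short exact sequence $0\to H^1_{\textup{tr}}\to H^1\to H^1_f\to 0$ and invokes \cite[Lemma~3.7.1(i)]{mr02} as a black box for each of $[X_1],[X_2],[X_3],[\varpi]$, whereas you unpack that lemma explicitly via the direct-sum decomposition and the regular-sequence property of $\{\varpi,X_1,X_2,X_3\}$ (resp.\ $\{\varpi,X,\gamma-1\}$); for \textbf{(D4)} both arguments hinge on the injectivity of $H^1_f(\QQ_\ell,\frak{T}/\mm)\to H^1_f(\QQ_\ell,\frak{T}_{1,1,1})[\mm_\RR]$ coming from the socle embedding (\ref{eqn:gorzeromodm}).
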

\begin{proof}
Suppose $\bar{\frak{n}}=(r,u,v,w)$ and $\bar{\frak{n}}^\prime=(r^ \prime,u^ \prime,v^ \prime,w^ \prime)$ are such that $\bar{\frak{n}}\prec \bar{\frak{n}}^\prime\prec\bar{\frak{n}}_0$. Then we have the following commutative diagram whose rows are exact by Lemma~\ref{lem:transverseproperties}:
$$\xymatrix{0\ar[r]& H^1_{\textup{tr}}(\QQ_\ell,\TT_{\bar{\frak{n}}^\prime})\ar[r]&  H^1(\QQ_\ell,\TT_{\bar{\frak{n}}^\prime})\ar[r]\ar[d]&  H^1_{f}(\QQ_\ell,\TT_{\bar{\frak{n}}^\prime})\ar[r]\ar[d]& 0\\
0\ar[r]& H^1_{\textup{tr}}(\QQ_\ell,\TT_{ \bar{\frak{n}}})\ar[r]& H^1(\QQ_\ell,\TT_{ \bar{\frak{n}}})\ar[r]& H^1_f(\QQ_\ell,\TT_{ \bar{\frak{n}}})\ar[r]& 0
}$$
Here the vertical arrows are induced from the natural surjection $\TT_{ \bar{\frak{n}}^\prime}\twoheadrightarrow \TT_{ \bar{\frak{n}}}$. This shows that  $H^1_{\textup{tr}}(\QQ_\ell,\TT_{ \bar{\frak{n}}^\prime})$ is mapped into $H^1_{\textup{tr}}(\QQ_\ell,\TT_{ \bar{\frak{n}}})$. Furthermore, as the $R_{\bar{\frak{n}}^\prime}$-module $\TT_{ \bar{\frak{n}}^\prime}^{\textup{Fr}_\ell=1}$ (resp., the $R_{ \bar{\frak{n}}}$-module $\TT_{ \bar{\frak{n}}}^{\textup{Fr}_\ell=1}$) is free of rank one, it follows by Lemma~\ref{lem:transverseproperties} and Proposition~\ref{prop:transversefurtherprop}(i) that
$$H^1_{\textup{tr}}(\QQ_\ell,\TT_{ \bar{\frak{n}}^\prime})\twoheadrightarrow H^1_{\textup{tr}}(\QQ_\ell,\TT_{ \bar{\frak{n}}}),$$
i.e., the transverse local condition on the quotients $\TT_{ \bar{\frak{n}}}$ is the same as the propagation of the local condition $H^1_{\textup{tr}}(\QQ_\ell,\TT_{\bar{\frak{n}}_0})$.  This verifies (even a stronger form of) (\textbf{C1}).

 As the quotient
$$H^1(\QQ_\ell,\TT_{\bar{\frak{n}}_0})/H^1_{\textup{tr}}(\QQ_\ell,\TT_{\bar{\frak{n}}_0})\cong H^1_f(\QQ_\ell,\TT_{\bar{\frak{n}}_0})$$
is a free $R_{\bar{\frak{n}}_0}$-module of rank one, (\textbf{C2}) follows from the proof of \cite[Lemma 3.7.1(i)]{mr02}, using the argument in loc.cit. for the multiplication by $[X_1]$, $[X_2]$, $[X_3]$ and $[\varpi]$ maps separately. 	

One verifies (\textbf{D1}) and (\textbf{D2}) for the collection $\{\frak{T}_{\bar{\frak{n}}}\}_{_{\bar{\frak{n}}\prec \bar{\frak{n}}_0}}\cup\{\frak{T}/\mm\}$ in an identical way. It remains to verify ($\textbf{D3}$). To settle that, consider the commutative diagram with exact rows:
$$\xymatrix{0\ar[r]& H^1_{\textup{tr}}(\QQ_\ell,\frak{T}/\mm)\ar[r]\ar@{-->}[d]&  H^1(\QQ_\ell,\frak{T}/\mm)\ar[r]\ar[d]&  H^1_{f}(\QQ_\ell,\frak{T}/\mm)\ar[r]\ar[d]& 0\\
0\ar[r]& H^1_{\textup{tr}}(\QQ_\ell,\frak{T}_{1,1,1})[\frak{m}_{\RR}]\ar[r]& H^1(\QQ_\ell,\frak{T}_{1,1,1})[\mm_{\RR}]\ar[r]& H^1_f(\QQ_\ell,\frak{T}_{1,1,1})[\mm_{\RR}]\ar[r]& 0
}$$
As the $\RR_0$-module $H^1_f(\QQ_\ell,\frak{T}_{1,1,1})$ (resp., the $\texttt{k}$-vector space  $H^1_{f}(\QQ_\ell,\frak{T}/\mm)$) is free of rank one (resp., is one-dimensional), it follows that the right-most arrow is injective and by chasing the diagram it follows that
$$H^1_{\textup{tr}}(\QQ_\ell,\frak{T}/\mm)=\ker\left(H^1(\QQ_\ell,\frak{T}/\mm)\lra \frac{H^1(\QQ_\ell,\frak{T}_{1,1,1})}{H^1_{\textup{tr}}(\QQ_\ell,\frak{T}_{1,1,1})}\right),$$
which is exactly the statement of (\textbf{D4}).
\end{proof}

\subsection{Controlling the Selmer sheaf}
\label{subsec:controlSelmersheaf}
Assume throughout this section that $\chi(\TT)=\chi(\frak{T})=1$ in addition to the running hypotheses. Let $\bar{\frak{n}}=(r,u,v,w)\in(\ZZ_{>0})^4$ and $\bar{\frak{s}}=(r,u,v)\in(\ZZ_{>0})^3$. Define the quotients $R_{\bar{\frak{n}}}=R/(\varpi^r,X_1^u,X_2^v,X_3^w)$ and $\frak{R}_{\bar{\frak{s}}}=\frak{R}/(\varpi^r,X^u,(\gamma-1)^v)$.

 \subsubsection{The upper bound}
 \begin{prop}
 \label{prop:upperbound}We have the following isomorphisms:
 \begin{itemize}
 \item[(i)] $H^1(\QQ_{\Sigma(\FFc)}/\QQ,\overline{T})\stackrel{\sim}{\lra} H^1(\QQ_{\Sigma(\FFc)}/\QQ,\TT_{\bar{\frak{n}}})[\mathcal{M}]$,\\
 \item[(ii)] $H^1(\QQ_{\Sigma(\FFc)}/\QQ,\frak{T}/\mm)\stackrel{\sim}{\lra} H^1(\QQ_{\Sigma(\FFc)}/\QQ,\frak{T}_{1,1,1})[\frak{m}]$,\\
 \item[(iii)] $H^1(\QQ_{\Sigma(\FFc)}/\QQ,\frak{T}_{1,1,1})\stackrel{\sim}{\lra} H^1(\QQ_{\Sigma(\FFc)}/\QQ,\frak{T}_{\bar{\frak{s}}})[(\varpi,X,\gamma-1)]$,\\
\item[(iv)] $H^1_{\FFc(n)}(\QQ,\overline{T}) \stackrel{\sim}{\lra} H^1_{\FFc(n)}(\QQ,{\TT}_{\bar{\frak{n}}})[\mathcal{M}]$,\\
\item[(v)] $H^1_{\FFc(n)}(\QQ,\frak{T}/\mm) \stackrel{\sim}{\lra} H^1_{\FFc(n)}(\QQ,{\frak{T}}_{\bar{\frak{s}}})[\frak{m}]$.
 \end{itemize}
 \end{prop}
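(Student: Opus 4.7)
The proofs of (i)--(v) share a single control-theoretic mechanism: each of the five assertions identifies $H^1$ of the ``socle submodule'' with the ideal-torsion in $H^1$ of the enlarged module, with (iv)--(v) moreover asserting that this identification respects Selmer conditions. The map in each case is the canonical socle inclusion coming from the Gorenstein structure of the coefficient ring: $R_{\bar{\frak{n}}}$ and $\frak{R}_{\bar{\frak{s}}}$ are Artinian Gorenstein (complete intersections in their regular ambient rings) with one-dimensional socle over $\texttt{k}$, so that, e.g., $\TT_{\bar{\frak{n}}}[\mathcal{M}]$ is identified with $\bar{T}$ by multiplication by the socle generator $\varpi^{r-1}X_1^{u-1}X_2^{v-1}X_3^{w-1}$, and analogously in (ii) and (iii) (using (\ref{eqn:gorzeromodm}) to identify $\frak{T}_{1,1,1}[\mm]$ with $\frak{T}/\mm=\bar{T}$, and noting in (iii) that $(\varpi,X,\gamma-1)$ is a regular sequence in $\frak{R}$ with $\frak{R}/(\varpi,X,\gamma-1)=\RR_0$).

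The first key ingredient is a strong $H^0$-vanishing: for every $G_\QQ$-stable subquotient $S$ of $\TT_{\bar{\frak{n}}}$ (respectively of $\frak{T}_{\bar{\frak{s}}}$, $\frak{T}_{1,1,1}$, or $\frak{T}/\mm$), one has $H^0(\QQ,S)=0$. This combines (\textbf{H3}) with \cite[Lemma~2.1.4]{mr02}: since the ambient rings are Gorenstein, every nonzero $G_\QQ$-stable submodule of the relevant free module meets the socle nontrivially and therefore contains a copy of $\bar{T}$, forcing $H^0$-vanishing to propagate from $\bar{T}$ to each submodule, and then dually to each quotient.

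The second ingredient is a control lemma: under the subquotient $H^0$-vanishing, for every ideal $J$ of the (Artinian local) coefficient ring, the socle inclusion $M[J]\hookrightarrow M$ induces an isomorphism $H^1(\QQ_{\Sigma(\FFc)}/\QQ,M[J])\xrightarrow{\sim}H^1(\QQ_{\Sigma(\FFc)}/\QQ,M)[J]$. The principal case $J=(z)$ follows from the four-term exact sequence $0\to M[z]\to M\xrightarrow{z} M\to M/zM\to 0$ split into the two short exact sequences $0\to M[z]\to M\to zM\to 0$ and $0\to zM\to M\to M/zM\to 0$: injectivity of $H^1(\QQ,M[z])\to H^1(\QQ,M)$ comes from $H^0(\QQ,zM)=0$, and the image is exactly the $z$-torsion because the composite $H^1(\QQ,M)\to H^1(\QQ,zM)\hookrightarrow H^1(\QQ,M)$ is multiplication by $z$, the second arrow being injective by $H^0(\QQ,M/zM)=0$. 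The general case is handled by induction on the number of generators of $J$ using $M[(J_0,z)]=(M[J_0])[z]$, at each step preserving the subquotient-vanishing hypothesis. Applied with $M=\TT_{\bar{\frak{n}}}$ and $J=\mathcal{M}$ this proves (i); (ii) and (iii) are obtained identically.

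For (iv) and (v), the global isomorphism is provided by (i) (resp.~(ii)) and it remains to match Selmer conditions. At each $\ell\in\Sigma(\FFc(n))$, applying the same control-lemma argument locally together with the Cartesian properties of $\FFc(n)$ at $\ell$---(\textbf{C1})--(\textbf{C3}) from Propositions \ref{prop:cartatp}, \ref{prop:cartatelltt} and \ref{prop:cartatell}, the transverse Cartesian property of Proposition \ref{prop:carttransverse} at primes $\ell\mid n$, and (\textbf{D4}) in the $\frak{T}/\mm$ case---yields a local isomorphism $H^1_{\FFc(n)}(\QQ_\ell,\bar{T})\xrightarrow{\sim}H^1_{\FFc(n)}(\QQ_\ell,\TT_{\bar{\frak{n}}})[\mathcal{M}]$; chasing the commutative diagram comparing global cohomology with the product of local quotients then settles (iv), and the same argument with $\frak{T}_{\bar{\frak{s}}}$ in place of $\TT_{\bar{\frak{n}}}$ settles (v). The main obstacle is the strong $H^0$-vanishing for all subquotients, which is not immediate from (\textbf{H3}) and genuinely uses the Gorenstein hypothesis on the coefficient rings via Mazur--Rubin's Lemma~2.1.4; once that is in hand, both the control lemma and its Selmer-level upgrade through the Cartesian formalism are formal.
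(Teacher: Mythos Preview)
Your proposal is correct and follows essentially the same route as the paper. The paper's own proof is a two-line citation: parts (i)--(iii) are attributed to the proof of \cite[Lemma~3.5.2]{mr02}, and (iv)--(v) are deduced from (i)--(iii) together with the cartesian propositions (Propositions~\ref{prop:cartatp}, \ref{prop:cartatelltt}, \ref{prop:cartatell}, \ref{prop:carttransverse}). What you have written is exactly an unpacking of that citation: the $H^0$-vanishing for subquotients via (\textbf{H3}) and \cite[Lemma~2.1.4]{mr02}, followed by the principal-ideal control step iterated along generators of $\mathcal{M}$, is the content of \cite[Lemma~3.5.2]{mr02} and its displayed equation~(7).

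One phrasing in your last paragraph deserves tightening. You assert a \emph{local} isomorphism $H^1_{\FFc(n)}(\QQ_\ell,\bar{T})\xrightarrow{\sim}H^1_{\FFc(n)}(\QQ_\ell,\TT_{\bar{\frak{n}}})[\mathcal{M}]$, obtained by ``applying the same control-lemma argument locally''. That is not available: the subquotient $H^0$-vanishing is a \emph{global} fact (from (\textbf{H3})), and there is no analogous local hypothesis at a general $\ell\in\Sigma(\FFc(n))$, so the local socle map on $H^1$ need not be injective. Fortunately you do not need it. What the cartesian axioms (\textbf{C2})/(\textbf{C3}) (resp.\ (\textbf{D2})--(\textbf{D4})) actually give, after iteration along the socle generator, is that $H^1_{\FFc(n)}(\QQ_\ell,\bar{T})$ is the \emph{preimage} of $H^1_{\FFc(n)}(\QQ_\ell,\TT_{\bar{\frak{n}}})$ under $H^1(\QQ_\ell,\bar{T})\to H^1(\QQ_\ell,\TT_{\bar{\frak{n}}})$; equivalently, the induced map on singular quotients $H^1/H^1_{\FFc(n)}$ is injective at each $\ell$. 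Combined with the global isomorphism (i) and a diagram chase, this yields (iv); similarly for (v). Replace the local-isomorphism sentence with this preimage formulation and your argument is complete.
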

 \begin{proof}
 (i), (ii) and (iii) follows from the proof of \cite[Lemma 3.5.2]{mr02}; see in particular the displayed equation (7) in loc.cit. (iv) is now verified using (i) and Propositions~\ref{propcartatptt}, \ref{prop:cartatelltt} and \ref{prop:carttransverse}. (v) follows from (ii), (iii) and Propositions~\ref{propcartatptt}, \ref{prop:cartatell} and \ref{prop:carttransverse}.
 \end{proof}
 
 \begin{cor}
 \label{cor:upperbound}
 Let $n\in\NN_{\bar{\frak{n}}}$ \textup{(}resp., $n \in \NN_{\bar{\frak{s}}}$$\textup{)}$  be a core vertex for the Selmer structure $\FFc$ on $\overline{T}$ $($in the sense of Definition~\ref{def:coreverticesforcan}$)$. Then,
 \begin{itemize}
 \item[(i)] the $R_{\bar{\frak{n}}}$-module $\textup{Hom}\left(H^1_{\FFc(n)}(\QQ,\TT_{\bar{\frak{n}}}),\Phi/\oo\right)$ and, 
 \item[(ii)] the $\frak{R}_{\bar{\frak{s}}}$-module $\textup{Hom}\left(H^1_{\FFc(n)}(\QQ,\frak{T}_{\bar{\frak{s}}}),\Phi/\oo\right)$
 \end{itemize}
 are both cyclic.
 \end{cor}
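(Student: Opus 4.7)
The plan is to combine Proposition~\ref{prop:upperbound}(iv) (resp., Proposition~\ref{prop:upperbound}(v)) with Pontryagin duality and a Nakayama-type argument. Write $H:=H^1_{\FFc(n)}(\QQ,\TT_{r,u,v,w})$, a finitely generated module over the Artinian local ring $R_{r,u,v,w}$ (finite as a set since $\TT_{r,u,v,w}$ is finite), and set $H^{\vee}:=\textup{Hom}(H,\Phi/\oo)$. To prove that $H^{\vee}$ is cyclic as an $R_{r,u,v,w}$-module, by Nakayama's lemma it suffices to check that $H^{\vee}/\mathcal{M}H^{\vee}$ is a one-dimensional $\mathtt{k}$-vector space, where $\mathcal{M}$ denotes the maximal ideal of $R_{r,u,v,w}$ (the image of the maximal ideal of $R$).

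The first key step is the standard duality identity
\[
H^{\vee}/\mathcal{M}H^{\vee}\;\cong\;\left(H[\mathcal{M}]\right)^{\vee},
\]
which follows from the fact that Pontryagin duality on finite $\oo$-modules interchanges the submodule killed by an ideal and the quotient by the same ideal. The second key step is to invoke Proposition~\ref{prop:upperbound}(iv), which identifies
\[
H[\mathcal{M}]\;=\;H^1_{\FFc(n)}(\QQ,\TT_{r,u,v,w})[\mathcal{M}]\;\cong\; H^1_{\FFc(n)}(\QQ,\bar{T}).
\]
Since $n$ is a core vertex for $\FFc$ on $\bar{T}$, Definition~\ref{def:coreverticesforcan}(ii) tells us that $H^1_{\FFc(n)}(\QQ,\bar{T})$ is a free $\mathtt{k}$-module of rank one. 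Taking Pontryagin duals (which preserves $\mathtt{k}$-dimension) then shows that $(H[\mathcal{M}])^{\vee}$ is one-dimensional over $\mathtt{k}$, so Nakayama's lemma gives the cyclicity of $H^{\vee}$.

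The argument in the $\frak{R}_{r,u,v}$-case is entirely analogous, replacing Proposition~\ref{prop:upperbound}(iv) by Proposition~\ref{prop:upperbound}(v) and the maximal ideal $\mathcal{M}$ of $R_{r,u,v,w}$ by the maximal ideal $\frak{m}$ of $\frak{R}_{r,u,v}$; the core vertex hypothesis again yields that $H^1_{\FFc(n)}(\QQ,\frak{T}/\mm)$ is one-dimensional over $\mathtt{k}=\frak{R}/\mm$, and Nakayama concludes.

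No serious obstacle is expected: the essential technical work is already embedded in the cartesian properties established in \S\ref{sec:coreverticesexist} (which feed into Proposition~\ref{prop:upperbound}). The only mild point deserving care is the duality identity $H^{\vee}/\mathcal{M}H^{\vee}\cong (H[\mathcal{M}])^{\vee}$, which requires that $H$ be finite so that Pontryagin duality is an exact anti-equivalence on finite abelian $p$-groups; this is indeed the case here since $R_{r,u,v,w}$ (and likewise $\frak{R}_{r,u,v}$) is a finite local ring.
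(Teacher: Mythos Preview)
Your proof is correct and follows essentially the same approach as the paper: both arguments use Proposition~\ref{prop:upperbound}(iv) (resp.~(v)) together with the Pontryagin-duality identity $H^{\vee}/\mathcal{M}H^{\vee}\cong (H[\mathcal{M}])^{\vee}$, the core-vertex hypothesis to see that the residual Selmer group is one-dimensional over $\mathtt{k}$, and then Nakayama's lemma. Your write-up is in fact slightly more explicit about the duality step than the paper's, and you cite the correct parts (iv)/(v) of Proposition~\ref{prop:upperbound} (the paper's internal references to (iii)/(iv) appear to be off by one).
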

 \begin{proof}
 By Proposition~\ref{prop:upperbound}(iii), it follows that
 $$\textup{Hom}\left(H^1_{\FFc(n)}(\QQ,\TT_{\bar{\frak{n}}}),\Phi/\oo\right)\Big{/}\mathcal{M}\cong \textup{Hom}\left(H^1_{\FFc(n)}(\QQ,\overline{T}),\Phi/\oo\right).$$
 Since the \texttt{k}-vector space $\textup{Hom}\left(H^1_{\FFc(n)}(\QQ,\overline{T}),\Phi/\oo\right)$ is one-dimensional (thanks to our assumption that $n$ is a core vertex and that $\chi(\TT)=1$), it follows  $\textup{Hom}\left(H^1_{\FFc(n)}(\QQ,\TT_{\bar{\frak{n}}}),\Phi/\oo\right)$ is a cyclic $R_{\bar{\frak{n}}}$-module by Nakayama's Lemma. The statement for $\frak{T}_{\bar{\frak{s}}}$ is proved in an identical fashion, using Proposition~\ref{prop:upperbound}(iv) instead of Proposition~\ref{prop:upperbound}(iii).
 \end{proof}

 \begin{rem}
 There exists infinitely many $n$ as in the statement of Corollary~\ref{cor:upperbound} thanks to \cite[\S4.1]{mr02}.
 \end{rem}
 \subsubsection{The lower bound} As above, let $\bar{\frak{n}}=(r,u,v,w)\in(\ZZ_{>0})^4$ and $\bar{\frak{s}}=(r,u,v)\in(\ZZ_{>0})^3$.

 \begin{prop}
 \label{prop:lowerbound}
  For $n\in\NN_{\bar{\frak{n}}}$  we have,
  $$\textup{length}_{\oo}\left(H^1_{\FFc(n)}(\QQ,\TT_{\bar{\frak{n}}})\right)-\textup{length}_{\oo}\left(H^1_{\FFc(n)^*}(\QQ,\TT_{\bar{\frak{n}}}^*)\right)=\textup{length}_{\oo}(R_{\bar{\frak{n}}}).$$
Similarly for $n \in \NN_{\bar{\frak{s}}}$,  
$$\textup{length}_{\oo}\left(H^1_{\FFc(n)}(\QQ,\frak{T}_{\bar{\frak{s}}})\right)-\textup{length}_{\oo}\left(H^1_{\FFc(n)^*}(\QQ,\frak{T}_{\bar{\frak{s}}}^*)\right)=\textup{length}_{\oo}(\frak{R}_{\bar{\frak{s}}}).$$
  \end{prop}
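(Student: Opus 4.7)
The plan is to apply the Greenberg--Wiles global duality formula
\be\label{eqn:wilessketch}
\frac{\#H^1_{\FFc(n)}(\QQ, M)}{\#H^1_{\FFc(n)^*}(\QQ, M^*)} \,=\, \frac{\#H^0(\QQ, M)}{\#H^0(\QQ, M^*)} \prod_{v \in \Sigma(\FFc(n))} \frac{\#H^1_{\FFc(n)}(\QQ_v, M)}{\#H^0(\QQ_v, M)}
\ee
(see \cite[Theorem 2.3.4]{mr02} or \cite[Theorem 1.7.3]{r00}) with $M=\TT_{\bar{\frak{n}}}$, and to compute each factor explicitly; the $\frak{T}_{\bar{\frak{s}}}$-case is identical after replacing $R_{\bar{\frak{n}}}$ by $\frak{R}_{\bar{\frak{s}}}$ throughout. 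Both $R_{\bar{\frak{n}}}$ and $\frak{R}_{\bar{\frak{s}}}$ are Gorenstein artinian, being quotients of the (complete-intersection) rings $R$ and $\frak{R}$ by regular sequences, so \cite[Lemma 2.1.4]{mr02} will be available throughout whenever one needs to propagate residual $H^0$-vanishings to the quotients $M$ and $M^*$.

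The global $H^0$-ratio is $1$ by (\textbf{H3}) combined with \cite[Lemma 2.1.4]{mr02}. For the local factors: at $v=\infty$ the contribution is $1/\#M^+$ (with the standard convention $H^1_\FF(\RR,M)=0$, justified because $p$ is odd); at $v=p$, Proposition~\ref{prop:loccondatp} gives $H^1_{\FFc(n)}(\QQ_p,M)=H^1(\QQ_p,M)$, and combining the local Euler--Poincar\'e formula with Tate local duality and the vanishing $H^0(\QQ_p,M^*)=0$ (deduced from (\textbf{H.nA}) via \cite[Lemma 2.1.4]{mr02}) yields a contribution $\#M\cdot\#H^0(\QQ_p,M^*)=\#M$; at a bad $\ell\neq p$ with $\ell\nmid n$, Lemma~\ref{lem:comparefiniteunr} identifies the canonical local condition with the unramified one, whose Euler factor is $1$; at a Kolyvagin prime $\ell\mid n$, Proposition~\ref{prop:transversefurtherprop}(i) together with Lemma~\ref{lem:transverseproperties} give $\#H^1_{\textup{tr}}(\QQ_\ell,M)=\#M^{\textup{Fr}_\ell=1}=\#H^0(\QQ_\ell,M)$ (since $\ell$ is unramified in $M$), so this ratio is also $1$. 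Combining, the right-hand side of \eqref{eqn:wilessketch} reduces to $\#M/\#M^+=\#M^-$.

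To conclude, I identify $\#\TT_{\bar{\frak{n}}}^-$ with $\#R_{\bar{\frak{n}}}$ (and $\#\frak{T}_{\bar{\frak{s}}}^-$ with $\#\frak{R}_{\bar{\frak{s}}}$). Since $p$ is odd, a fixed complex conjugation induces a $G_\QQ$-stable $R$-module decomposition $\TT=\TT^+\oplus\TT^-$; the summand $\TT^-$ is $R$-projective (as a direct summand of the free module $\TT$) and hence $R$-free by locality of $R$, and its residual reduction $\bar T^-$ has $\texttt{k}$-dimension $\chi(\TT)=1$, so Nakayama gives $\TT^-\cong R$ and therefore $\TT_{\bar{\frak{n}}}^-\cong R_{\bar{\frak{n}}}$. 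Passing from cardinalities to $\oo$-lengths via $\textup{length}_\oo(\cdot)=\log_{\#(\oo/\varpi)}\#(\cdot)$ on both sides of \eqref{eqn:wilessketch} yields the claimed equality; the $\frak{T}_{\bar{\frak{s}}}$-case is identical, using that $\frak{R}$ is local so $\frak{T}^-$ is $\frak{R}$-free of rank one. The main technical obstacle is uniformly securing the $H^0$-vanishings --- most delicately $H^0(\QQ_p,\TT_{\bar{\frak{n}}}^*)=0$ and its $\frak{T}$-analog --- for \emph{all} quotients simultaneously; this is exactly what the Gorenstein-module argument of \cite[Lemma 2.1.4]{mr02} is designed for, and once it is in hand the rest is standard Euler-characteristic bookkeeping.
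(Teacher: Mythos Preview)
Your argument is correct. The paper's proof is more telegraphic: it first invokes \cite[Corollary~2.3.6]{mr02} to reduce to the case $n=1$ (so the transverse local factors need not be treated separately), and then simply cites \cite[Theorem~4.1.13]{mr02} applied to the free $\oo$-module $T=\TT_{u,v,w}$ (resp.\ $\frak{T}_{u,v}$) and its $\varpi^r$-quotient, which packages exactly the Greenberg--Wiles computation you carry out by hand. Your route has the virtue of being self-contained and of making the r\^ole of each hypothesis transparent---in particular you see explicitly where (\textbf{H.nA}) enters (to kill $H^0(\QQ_p,M^*)$) and where (\textbf{H.Tam}) is invoked (to identify $\FFc$ with the unramified condition at bad $\ell$ via Lemma~\ref{lem:comparefiniteunr} and its $\frak{R}$-analogue Lemma~\ref{lem:cartpfrakprepare1})---whereas the paper's citation of \cite[Theorem~4.1.13]{mr02} hides the local bookkeeping and in fact does not require (\textbf{H.Tam}) at this particular step, since the canonical Selmer structure over a DVR already has the correct Euler factors at bad primes without identifying it with $H^1_{\textup{ur}}$. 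Either approach lands on $\#M^- = \#R_{\bar{\frak{n}}}$ (resp.\ $\#\frak{R}_{\bar{\frak{s}}}$) via $\chi(\TT)=\chi(\frak{T})=1$, which is exactly the running assumption of \S\ref{subsec:controlSelmersheaf}.
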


  \begin{proof}
  By \cite[Corollary 2.3.6]{mr02} it suffices to verify the assertions of the proposition only when $n=1$.

  Let $\TT_{u,v,w}$ be as in \S\ref{subsec:notationhypo}, so that $\TT_{u,v,w}$ is a free $\oo$-module of rank $uvw$. 
  Theorem 4.1.13 of \cite{mr02} (applied with the $\oo[[G_\QQ]]$-representation $T=\TT_{u,v,w}$ and its quotient $\TT_{\bar{\frak{n}}}=\TT_{u,v,w}/\varpi^r
  )$ shows that
  $$\textup{length}_{\oo}\left(H^1_{\FFc}(\QQ,\TT_{\bar{\frak{n}}})\right)-\textup{length}_{\oo}\left(H^1_{\FFc^*}(\QQ,\TT_{\bar{\frak{n}}}^*)\right)=ruvw\cdot\chi(\TT)=\textup{length}_{\oo}(R_{\bar{\frak{n}}}),$$
  as desired. Similarly, repeating the arguments above for the free $\oo$-module $\frak{T}_{u,v}$ (of rank $uv\cdot\dim_{\texttt{k}}(\RR_0)$), we conclude with the second assertion.
  \end{proof}
  \begin{cor}
\label{cor:lowerbound}
For $n$ as in Proposition~\ref{prop:lowerbound},
\begin{itemize}
\item[(i)] $\textup{length}_{\oo}\left(H^1_{\FFc(n)}(\QQ,\TT_{\bar{\frak{n}}})\right) \geq \textup{length}_{\oo}(R_{\bar{\frak{n}}})$,
\item[(ii)] $\textup{length}_{\oo}\left(H^1_{\FFc(n)}(\QQ,\frak{T}_{\bar{\frak{s}}})\right)\geq \textup{length}_{\oo}(\frak{R}_{\bar{\frak{s}}}).$
\end{itemize}
  \end{cor}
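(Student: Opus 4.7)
The plan is to derive Corollary~\ref{cor:lowerbound} directly from Proposition~\ref{prop:lowerbound}; no new input is required beyond the observation that the $\oo$-length of a module is a non-negative integer whenever it is finite. Rewriting the equality of Proposition~\ref{prop:lowerbound} as
$$\textup{length}_{\oo}\left(H^1_{\FFc(n)}(\QQ,\TT_{\bar{\frak{n}}})\right) \;=\; \textup{length}_{\oo}(R_{\bar{\frak{n}}}) \;+\; \textup{length}_{\oo}\left(H^1_{\FFc(n)^*}(\QQ,\TT_{\bar{\frak{n}}}^*)\right),$$
part (i) will follow once one verifies that the last summand on the right is $\geq 0$, and analogously for part (ii) with the substitution $R_{\bar{\frak{n}}} \rightsquigarrow \frak{R}_{\bar{\frak{s}}}$ and $\TT_{\bar{\frak{n}}} \rightsquigarrow \frak{T}_{\bar{\frak{s}}}$.

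To justify this I would briefly note the finiteness of the dual Selmer group. The module $\TT_{\bar{\frak{n}}}^*$ is a finite discrete $\oo[G_{\QQ,\Sigma(\FFc(n))}]$-module (since $\TT_{\bar{\frak{n}}}$ has finite cardinality and $\Sigma(\FFc(n))=\Sigma(\FFc)\cup\{\ell\mid n\}$ is finite), and standard finiteness theorems for Galois cohomology of number fields with finite coefficients show that $H^1(\QQ_{\Sigma(\FFc(n))}/\QQ,\TT_{\bar{\frak{n}}}^*)$ is finite; the dual Selmer group $H^1_{\FFc(n)^*}(\QQ,\TT_{\bar{\frak{n}}}^*)$ is a submodule, hence also finite. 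Therefore its $\oo$-length is a well-defined non-negative integer, and discarding it from the right-hand side of the displayed identity yields (i). An identical argument, applied to the finite module $\frak{T}_{\bar{\frak{s}}}^*$, gives (ii).

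I do not anticipate any serious obstacle: the content of the corollary is a trivial weakening of Proposition~\ref{prop:lowerbound}, the only subtle point being finiteness of the dual Selmer module, which in turn is a standard finiteness statement in Galois cohomology for finite $G_{\QQ,\Sigma}$-modules. Consequently, the proof should amount to a single display together with a one-sentence remark on finiteness.
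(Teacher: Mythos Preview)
Your proposal is correct and matches the paper's treatment: the corollary is stated without proof, as an immediate consequence of Proposition~\ref{prop:lowerbound} obtained by dropping the non-negative dual Selmer term. Your added remark on finiteness of the dual Selmer group is more detail than the paper supplies, but it is accurate and does no harm.
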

  We are now ready to prove Theorem~\ref{thm:corevertexn}: 
  \begin{cor}
  \label{cor:corevertexn}$\,$
  \begin{itemize} 
\item[(i)] Let $n\in\NN_{\bar{\frak{n}}}$  be a core vertex for the Selmer structure $\FFc$ on the residual representation $\overline{T}$. Then, the $R_{\bar{\frak{n}}}$-module $H^1_{\FFc(n)}(\QQ,\TT_{\bar{\frak{n}}})$  is free of rank one and $H^1_{\FFc(n)^*}(\QQ,\TT_{\bar{\frak{n}}}^*)=0$.
  \item[(ii)]   Let $s \in \NN_{\bar{\frak{s}}}$ $\textup{)}$ be a core vertex for $\FFc$ on $\overline{T}$.Then, the $\frak{R}_{\bar{\frak{s}}}$-module $H^1_{\FFc(s)}(\QQ,\frak{T}_{\bar{\frak{s}}})$ is free of rank one and $H^1_{\FFc(s)^*}(\QQ,\frak{T}_{\bar{\frak{s}}}^*)=0$.
  \end{itemize}
  \end{cor}
  \begin{proof}
  It follows from Corollaries~\ref{cor:upperbound} and~\ref{cor:lowerbound} that $\textup{Hom}\left(H^1_{\FFc(n)}(\QQ,\TT_{\bar{\frak{n}}}),\Phi/\oo\right)$ (resp.,  $\textup{Hom}\left(H^1_{\FFc(s)}(\QQ,\frak{T}_{\bar{\frak{s}}}),\Phi/\oo\right)$) is a free $R_{\bar{\frak{n}}}$-module
  (resp., a free $\frak{R}_{\bar{\frak{s}}}$-module) of rank one. The first halves of (i) and (ii) follow from the Gorenstein property of $R$ and $\frak{R}$, c.f., \cite[Prop. 4.9 and 4.10]{grothendieck}. The point is that $\Phi/\oo$ is an injective hull of $\texttt{k}$ and thus a dualizing module for $R$ and $\frak{R}$.

  The second halves (the vanishing statements of the dual Selmer groups) follow from the first halves and Proposition~\ref{prop:lowerbound}.
  \end{proof}
{\scriptsize
\bibliographystyle{halpha}
\bibliography{references}
}
\end{document}